%% file: main.tex
\documentclass[english]{article}
\usepackage[T1]{fontenc}
\usepackage[utf8]{inputenc}
\usepackage{float}
\usepackage{units}
\usepackage{dsfont}
\usepackage{amsmath}
\usepackage{amsthm}
\usepackage{amssymb}
\usepackage[authoryear]{natbib}

\makeatletter

\floatstyle{ruled}
\newfloat{algorithm}{tbp}{loa}
\providecommand{\algorithmname}{Algorithm}
\floatname{algorithm}{\protect\algorithmname}

\theoremstyle{remark}
\newtheorem{rem}{\protect\remarkname}
\theoremstyle{plain}
\newtheorem{assumption}{\protect\assumptionname}
\theoremstyle{definition}
\newtheorem{defn}{\protect\definitionname}
\theoremstyle{plain}
\newtheorem{thm}{\protect\theoremname}
\theoremstyle{plain}
\newtheorem{cor}{\protect\corollaryname}
\theoremstyle{plain}
\newtheorem{lem}{\protect\lemmaname}
\theoremstyle{plain}
\newtheorem{fact}{\protect\factname}

\@ifundefined{date}{}{\date{}}
\usepackage{blindtext}
\usepackage[preprint]{jmlr2e}
\usepackage{algorithmic}

\usepackage{lastpage}
\jmlrheading{27}{2026}{1-\pageref{LastPage}}{6/26; Revised}{}{26-0000}{Zijian Liu}

\ShortHeadings{Adam Converges in Nonsmooth Nonconvex Optimization}{Liu}
\firstpageno{1}

\makeatother

\usepackage{babel}
\providecommand{\assumptionname}{Assumption}
\providecommand{\corollaryname}{Corollary}
\providecommand{\definitionname}{Definition}
\providecommand{\factname}{Fact}
\providecommand{\lemmaname}{Lemma}
\providecommand{\remarkname}{Remark}
\providecommand{\theoremname}{Theorem}

\begin{document}
\global\long\def\A{\textsc{A}}%
\global\long\def\E{\mathbb{E}}%
\global\long\def\N{\mathbb{N}}%
\global\long\def\P{\mathbb{P}}%
\global\long\def\R{\mathbb{R}}%
\global\long\def\O{\mathcal{O}}%
\global\long\def\Z{\mathrm{Z}}%
\global\long\def\argmin{\mathrm{argmin}}%
\global\long\def\dis{\mathbb{D}}%
\global\long\def\bG{\boldsymbol{G}}%
\global\long\def\bg{\boldsymbol{g}}%
\global\long\def\bmm{\boldsymbol{m}}%
\global\long\def\bu{\boldsymbol{u}}%
\global\long\def\bv{\boldsymbol{v}}%
\global\long\def\bx{\boldsymbol{x}}%
\global\long\def\by{\boldsymbol{y}}%
\global\long\def\bz{\boldsymbol{z}}%
\global\long\def\bdelta{\boldsymbol{\delta}}%
\global\long\def\bet{\boldsymbol{\eta}}%
\global\long\def\bxi{\boldsymbol{\xi}}%
\global\long\def\bsig{\boldsymbol{\sigma}}%
\global\long\def\bzero{\boldsymbol{0}}%
\global\long\def\p{\mathrm{p}}%
\global\long\def\cp{\mathrm{q}}%
\global\long\def\defeq{\triangleq}%
\global\long\def\mydots{\dots}%
\global\long\def\d{\mathrm{d}}%
\global\long\def\1{\mathds{1}}%
\global\long\def\sgn{\mathrm{sgn}}%
\global\long\def\ex{\mathrm{Exp}}%
\global\long\def\reg{\mathrm{R}}%
\global\long\def\random{\mathrm{r}}%
\global\long\def\FTRL{\textsc{FTRL}}%
\global\long\def\EOTNC{\textsc{EO2NC}}%
\global\long\def\OTNC{\textsc{O2NC}}%
\global\long\def\DTNC{\textsc{D2NC}}%
\global\long\def\Adam{\textsc{Adam}}%
\global\long\def\AdamW{\textsc{AdamW}}%
\global\long\def\AdaGrad{\textsc{AdaGrad}}%
\global\long\def\RMSProp{\textsc{RMSProp}}%
\global\long\def\ClippedAdam{\textsc{Clipped-Adam}}%

\title{Adam Converges in Nonsmooth Nonconvex Optimization}

\author{\name Zijian Liu \email zl3067@stern.nyu.edu\\
       \addr Department of Technology, Operations, and Statistics\\
	   Leonard N. Stern School of Business\\
       New York University\\
       New York, NY 10012, USA
}

\editor{}

\maketitle

\begin{abstract}%
\textsc{Adam}  is one of the most widely implemented and influential modern optimizers. Why is it effective across different optimization problems in practice? This question arguably lies at the center of the optimization community over the last decade and has motivated a substantial body of work aimed at understanding its convergence behavior. However, existing studies have mainly focused on the convergence rate of \textsc{Adam} in smooth nonconvex optimization, which unfortunately does not adequately capture practical settings, since many real-world problems are nonsmooth, such as those arising in training neural networks. Thus, these studies cannot fully explain the popularity and empirical success of \textsc{Adam}. 

Recently, an insightful and powerful framework called Online-to-Nonconvex Conversion has opened a new way to analyze \textsc{Adam} for nonsmooth nonconvex optimization. Unfortunately, prior works along this line share two common limitations. First, all of them ignore the important bias-correction term in the original \textsc{Adam} algorithm. Second and more importantly, many of them require extra operations that are not used in \textsc{Adam}, such as a clipping step. Therefore, the convergence guarantee for the original \textsc{Adam} method still remains unclear.

In this work, we present the first finite-time analysis for the classical form of \textsc{Adam}, i.e., with the bias-correction step and without further algorithmic modifications, and prove that a randomly scaled learning rate ensures a convergence rate of $1/T^{\frac{2}{13}}$ for nonsmooth nonconvex optimization. Moreover, our result provably applies to the modern heavy-tailed noise regime, which is closer to practice. Interestingly, our theory is established under the parameter choice $\beta_1=\beta_2$, aligning with the recent empirical studies.
\end{abstract}

\begin{keywords}
Adam, nonsmooth nonconvex optimization, stochastic optimization, heavy-tailed noise, online learning
\end{keywords}

\input{introduction.tex}

\input{preliminary.tex}
\input{algorithms.tex}

\input{conclusion.tex}

\clearpage

\appendix
\input{appendix.tex}

\clearpage

\bibliography{ref}

\end{document}

%% file: introduction.tex
\section{Introduction}

$\Adam$, a modern optimizer that builds upon the seminal adaptive
method $\AdaGrad$ \citep{DBLP:conf/colt/McMahanS10,JMLR:v12:duchi11a}
and the famous $\RMSProp$ algorithm \citep{tieleman2012lecture},
was introduced by \citet{kingma2014adam} in 2014. Since then, it
has undoubtedly become one of the most widely implemented and influential
optimization methods. Despite its strong empirical performance, one
question has remained central to the optimization community over the
last decade:{\begin{center} \textit{Why is $\Adam$ effective across
different optimization problems in practice?} \end{center}}This question
has motivated a substantial body of work aimed at understanding the
convergence behavior, theoretical properties, and provable advantages
of $\Adam$ (see, e.g., \citet{j.2018on,NEURIPS2018_90365351,de2018convergence,chen2018on,Zou_2019_CVPR,doi:10.1137/19M1263443,guo2021novel,defossez2022a,NEURIPS2022_b6260ae5,NEURIPS2023_a3cc5012,NEURIPS2023_7ac19fdc,10.1145/3637528.3671718,zhang2025convergence}).

However, existing studies have primarily focused on the convergence
of $\Adam$ for smooth nonconvex optimization, which does not adequately
capture practical settings, since many real-world objectives, such
as those encountered in neural network training, are nonsmooth. Consequently,
these studies cannot fully explain the popularity and empirical success
of $\Adam$. Thus, developing convergence guarantees for Adam beyond
smooth functions is essential for building a theory that more faithfully
reflects its practical behavior. This gap indicates that the existing
theoretical understanding remains incomplete.

Recently, an insightful and powerful framework, known as Online-to-Nonconvex
Conversion ($\OTNC$) \citep{pmlr-v202-cutkosky23a}, has opened a
new avenue for analyzing $\Adam$ in nonsmooth nonconvex optimization.
Unfortunately, prior works along this line \citep{pmlr-v235-ahn24b,NEURIPS2024_ac8ec9b4,pmlr-v313-nguyen26b,xie2026dynamic}
share two common limitations. First, all of them ignore the important
bias-correction term in the original $\Adam$ algorithm, which is
a key component in the default implementation of $\Adam$. Second
and more importantly, many of these works require additional operations
that are not part of $\Adam$, such as a clipping step. As a result,
the algorithms analyzed in these works differ from the original Adam
method. Therefore, the convergence guarantee for the original $\Adam$
method has yet to be established.

\subsection{Our Contributions}

In this work, we analyze the classical form of $\Adam$, i.e., with
bias correction and without any further algorithmic modifications,
and establish the following results:
\begin{itemize}
\item We prove that a randomly scaled learning rate ensures that $\Adam$
converges at a rate of $1/T^{\frac{2}{13}}$ in nonsmooth nonconvex
optimization. To the best of our knowledge, this gives the first finite-time
rate for the original $\Adam$ algorithm in this setting.
\item Our analysis provably applies to the modern heavy-tailed noise regime,
which more closely reflects practice \citep{pmlr-v97-simsekli19a,NEURIPS2020_b05b57f6}.
Specifically, when the tail index $\p$ of the gradient noise satisfies
$4/3<\p\leq2$, we show that $\Adam$ converges at a rate of $1/T^{\frac{2(3\p-4)}{19\p-12}}$.
\item Interestingly, our theory is established under the parameter choice
$\beta_{1}=\beta_{2}$, which aligns with the recent empirical studies
\citep{NEURIPS2025_5bd9aa20}.
\item Finally, we extend our results to the case where the problem-dependent
parameters (e.g., the tail index $\p$) are unknown.
\end{itemize}

\subsection{Related Work}

\paragraph{$\protect\Adam$.}

$\Adam$ was proposed by \citet{kingma2014adam} as a further generalization
of $\RMSProp$ \citep{tieleman2012lecture}, both of which were inspired
by the well-known adaptive algorithm $\AdaGrad$ \citep{DBLP:conf/colt/McMahanS10,JMLR:v12:duchi11a}
originally designed for Online Convex Optimization (OCO). Since its
introduction, $\Adam$ and its variants, such as $\AdamW$ \citep{loshchilov2018decoupled},
have been observed to be highly effective across a wide range of optimization
tasks and have become dominant tools in modern machine learning, especially
for training neural networks.

Two hyperparameters that are particularly important for $\Adam$ are
$\beta_{1}\in\left[0,1\right)$ and $\beta_{2}\in\left[0,1\right)$,
the exponential decay rates for the moment estimates. \citet{kingma2014adam}
suggested setting $\beta_{1}=0.9$ and $\beta_{2}=0.999$ by default.
However, in the literature on large language models \citep{NEURIPS2020_1457c0d6,touvron2023llama,liu2024deepseek},
these two parameters are often set to $\beta_{1}=0.9$ and $\beta_{2}=0.95$.
In addition, recent works, such as \citet{NEURIPS2025_5bd9aa20},
suggest choosing $\beta_{1}=\beta_{2}$ for improved performance,
based on extensive empirical experiments.

\paragraph{Online-to-Nonconvex Conversion.}

The Online-to-Nonconvex Conversion ($\OTNC$) method, introduced by
\citet{pmlr-v202-cutkosky23a}, provides a powerful framework for
converting the regret bound of an OCO algorithm into a provable bound
for nonconvex optimization. However, one undesirable feature of $\OTNC$
is that it queries the (stochastic) gradient at a new point that differs
from the current iterate. To overcome this issue, \citet{pmlr-v235-zhang24k}
later proposed a new method called Exponentiated Online-to-Nonconvex
Conversion ($\EOTNC$). These two methods have since served as fundamental
tools for studying nonconvex optimization, for both smooth and nonsmooth
objectives, e.g., \citet{pmlr-v235-liu24bo,10.1145/3717823.3718308,pmlr-v267-ahn25a,pmlr-v313-liu26a}.

More surprisingly, it is known that $\Adam$ (with a randomly scaled
learning rate) can be derived from $\EOTNC$ when combined with the
classical Follow-the-Regularized-Leader ($\FTRL$) algorithm for OCO
\citep{pmlr-v235-ahn24b,NEURIPS2024_ac8ec9b4}. Recently, two more
works \citep{pmlr-v313-nguyen26b,xie2026dynamic} have followed this
line of research and obtained further results.

\paragraph{Nonsmooth nonconvex optimization.}

In the literature on machine learning, \citet{pmlr-v119-zhang20p}
provided the first non-asymptotic analysis for finding stationary
points of nonsmooth nonconvex functions, where the stationary point
is defined through the Goldstein $\delta$-subdifferential \citep{goldstein1977optimization}.
Since then, several subsequent works have made further contributions
\citep{NEURIPS2022_2c8d9636,pmlr-v162-tian22a,kornowski2022on,pmlr-v195-jordan23a,JMLR:v23:21-1507,tian2024no}.
Notably, the $\OTNC$ and $\EOTNC$ frameworks discussed above provide
a systematic way to construct algorithms that guarantee optimal convergence
rates \citep{pmlr-v202-cutkosky23a,pmlr-v235-zhang24k,NEURIPS2024_ac8ec9b4,pmlr-v267-ahn25a}. 

However, these works only consider deterministic gradient feedback
or a stochastic gradient oracle with finite-variance noise. Such settings
could not accurately capture practical scenarios, since the finite-variance
condition, which also covers the deterministic setting, is often considered
too optimistic; in contrast, heavy-tailed noise provides a more realistic
model, where the stochastic noise has only a finite $\p$-th moment
for $\p\in\left(1,2\right]$ \citep{pmlr-v97-simsekli19a,NEURIPS2020_b05b57f6}.
Under heavy-tailed noise, using the $\OTNC$ framework, \citet{pmlr-v235-liu24bo}
proved the first high-probability convergence bound, and \citet{pmlr-v313-liu26a}
established the first optimal in-expectation rate with a matching
lower bound.

%% file: preliminary.tex
\section{Preliminaries}

\paragraph{Notation.}

$\N$ is the set of natural numbers (excluding $0$). We let $\left[n\right]\triangleq\left\{ 1,\mydots,n\right\} ,\forall n\in\N$.
For $a,b\in\R$, we define $a\land b\defeq\min\left\{ a,b\right\} $
and $a\lor b\defeq\max\left\{ a,b\right\} $. $\R_{>0}^{d}$ (resp.
$\R_{\geq0}^{d}$) is the set of vectors in $\R^{d}$ whose coordinates
are positive (resp. non-negative). Given a vector $\boldsymbol{\Lambda}\in\R_{>0}^{d}$,
its induced inner product and norm are defined as $\left\langle \bx,\by\right\rangle _{\boldsymbol{\Lambda}}\triangleq\sum_{i=1}^{d}\bx[i]\boldsymbol{\Lambda}[i]\by[i]$
and $\left\Vert \bx\right\Vert _{\boldsymbol{\Lambda}}\triangleq\sqrt{\left\langle \bx,\bx\right\rangle _{\boldsymbol{\Lambda}}}$,
respectively. Moreover, we use the shorthand notations $\bx^{a}[i]\defeq(\bx[i])^{a}$,
$(\bx\by)[i]\defeq\bx[i]\by[i]$, and $(\bx/\by)[i]\defeq\bx[i]/\by[i]$,
for any $a\in\R$, $i\in\left[d\right]$, and $\bx,\by\in\R^{d}$,
whenever the R.H.S. is well-defined. For any differentiable function
$h$, $\nabla_{i}h$ is the partial derivative w.r.t. the $i$-th
coordinate. We use $\sgn$ to denote the sign function.

\paragraph{Objective.}

We are interested in the following optimization problem
\[
\min_{\bx\in\R^{d}}F(\bx),
\]
where $F:\R^{d}\to\R$ is differentiable and possibly nonconvex.
\begin{rem}
The differentiability of $F$ is assumed without loss of generality.
For more justification of this point, we refer the reader to \citet[Section 2]{pmlr-v202-cutkosky23a}.
\end{rem}

\paragraph{Assumptions.}

We make the following assumptions for our analysis.
\begin{assumption}[Lower boundedness]
\label{assu:lb}The objective is lower bounded, i.e., $F_{\star}\triangleq\inf_{\bx\in\R^{d}}F(\bx)\in\R$.
\end{assumption}
\begin{assumption}[Well-behavedness]
\label{assu:FTC}The objective satisfies 
\[
F(\bx)-F(\by)=\int_{0}^{1}\left\langle \nabla F(\by+t(\bx-\by)),\bx-\by\right\rangle \d t,\forall\bx,\by\in\R^{d}.
\]
\end{assumption}
\begin{assumption}[Lipschitzness]
\label{assu:lip}There exists $\bG\in\R_{>0}^{d}$ such that $\left|\nabla_{i}F(\bx)\right|\leq\bG[i],\forall\bx\in\R^{d},\forall i\in\left[d\right]$.
\end{assumption}
Assumption \ref{assu:lb} is a common condition in the nonconvex optimization
literature. Assumption \ref{assu:FTC} was introduced by \citet{pmlr-v202-cutkosky23a}.
Assumption \ref{assu:lip} is standard in nonsmooth optimization.
Indeed, one can observe that Assumption \ref{assu:FTC} is implied
by the differentiability of $F$ together with Assumption \ref{assu:lip}.
However, we still state it as a separate assumption for consistency
with prior works.
\begin{assumption}[Unbiased stochastic gradient]
\label{assu:unbias}There exists a function $\bg:\R^{d}\times\mathrm{Z}\to\R^{d}$
associated with a probability distribution $\dis$ on $\Z$ such that
$\E_{Z\sim\dis}\left[\bg(\bx;Z)\right]=\nabla F(\bx),\forall\bx\in\R^{d}$.
\end{assumption}
\begin{assumption}[Heavy-tailed noise]
\label{assu:heavy}There exist $\p\in\left(1,2\right]$ and $\bsig\in\R_{\geq0}^{d}$
such that $\E_{Z\sim\dis}\left[\left|\bxi(\bx;Z)[i]\right|^{\p}\right]\leq\bsig^{\p}[i],\forall\bx\in\R^{d},\forall i\in\left[d\right]$,
where $\bxi(\bx;Z)\triangleq\bg(\bx;Z)-\nabla F(\bx)$, with $\bg$
and $\dis$ as in Assumption \ref{assu:unbias}.
\end{assumption}
\begin{rem}
Throughout the rest of the paper, we denote the conjugate of $\p$
by $\cp\defeq\frac{\p}{\p-1}\in\left[2,+\infty\right)$.
\end{rem}
Assumption \ref{assu:unbias} is the standard unbiasedness assumption
in stochastic optimization. Assumption \ref{assu:heavy} is the heavy-tailed
noise condition, introduced in \citet{pmlr-v97-simsekli19a} and \citet{NEURIPS2020_b05b57f6},
which can better fit many modern machine learning tasks, such as training
neural networks, and also generalizes the classical finite-variance
condition (i.e., $\p=2$).

\paragraph{Metric.}

Since the objective function is neither convex nor smooth, we need
a suitable metric to measure convergence. In this work, we adopt the
following convergence criterion proposed by \citet{NEURIPS2024_ac8ec9b4}.
For more discussion on this type of convergence metric and its relation
to the classical notion of the Goldstein stationary point \citep{goldstein1977optimization},
we refer the reader to \citet{NEURIPS2024_ac8ec9b4}.
\begin{defn}[{$(\lambda,\varepsilon)$-$L_{1}$-stationary point \citep[Definition 15]{NEURIPS2024_ac8ec9b4}}]
\label{def:stationary}Suppose $F:\R^{d}\to\R$ is differentiable.
We say $\bx$ is a $(\lambda,\varepsilon)$-$L_{1}$-stationary point
of $F$ if $\left\Vert \nabla F(\bx)\right\Vert _{1}^{\left[\lambda\right]}\leq\varepsilon$,
where
\[
\left\Vert \nabla F(\bx)\right\Vert _{1}^{\left[\lambda\right]}\defeq\inf_{\substack{\P\in{\cal P}(\R^{d})\\
\E_{\by\sim\P}\left[\by\right]=\bx
}
}\left\{ \left\Vert \E_{\by\sim\P}\left[\nabla F(\by)\right]\right\Vert _{1}+\lambda\E_{\by\sim\P}\left[\left\Vert \by-\bx\right\Vert _{2}^{2}\right]\right\} ,
\]
in which ${\cal P}(\R^{d})$ denotes the set of all probability distributions
on $\R^{d}$.
\end{defn}
\begin{rem}
Recently, \citet[Definition 2]{yu2026stosignsgd} proposed the $(\lambda,\varepsilon)$-$L_{a,b}$-stationary
point of $F$, for any $1\leq a,b\leq+\infty$, defined by
\[
\left\Vert \nabla F(\bx)\right\Vert _{a,b}^{\left[\lambda\right]}\defeq\inf_{\substack{\P\in{\cal P}(\R^{d})\\
\E_{\by\sim\P}\left[\by\right]=\bx
}
}\left\{ \left\Vert \E_{\by\sim\P}\left[\nabla F(\by)\right]\right\Vert _{a}+\lambda\E_{\by\sim\P}\left[\left\Vert \by-\bx\right\Vert _{b}^{2}\right]\right\} \leq\varepsilon.
\]
This definition recovers Definition \ref{def:stationary} and the
$(\lambda,\varepsilon)$-stationary point \citep[Definition 2.2]{pmlr-v235-zhang24k}
when $(a,b)=(1,2)$ and $(a,b)=(2,2)$, respectively. In fact, by
Lemmas \ref{lem:EO2NC-stability} and \ref{lem:iterate-bound}, all
of our results in Section \ref{sec:Adam} can be generalized to $a=1$
and any $b\in\left[1,+\infty\right]$, with only the dependence on
the dimension $d$ changing. In particular, when $b=+\infty$, we
can remove the factor $d$ from our bounds. However, for consistency
with the literature, we continue to use the $(\lambda,\varepsilon)$-$L_{1}$-stationary
point, i.e., the case $(a,b)=(1,2)$.
\end{rem}

\paragraph{Two notions.}

To conclude this section, we recall two useful notions from \citet{pmlr-v267-ahn25a}
that will be used later in this work.
\begin{defn}[{Exponential moving average (EMA) of iterates \citep[Definition 3]{pmlr-v267-ahn25a}}]
\label{def:EMA}Given a discount factor $\beta_{1}\in\left[0,1\right)$
and a sequence of iterates $\left\{ \bx_{t}\right\} _{t=1}^{T}$,
the $\beta_{1}$-EMA of $\left\{ \bx_{t}\right\} _{t=1}^{T}$, denoted
by $\left\{ \bar{\bx}_{t}\right\} _{t=1}^{T}$, is defined as
\[
\bar{\bx}_{t}\defeq\frac{1-\beta_{1}}{1-\beta_{1}^{t}}\sum_{s=1}^{t}\beta_{1}^{t-s}\bx_{s},\forall t\in\left[T\right].
\]
\end{defn}
\begin{rem}
In the remainder of the paper, when $\beta_{1}$ and $\left\{ \bx_{t}\right\} _{t=1}^{T}$
are clear from context, the notation $\left\{ \bar{\bx}_{t}\right\} _{t=1}^{T}$
always denotes the $\beta_{1}$-EMA of $\left\{ \bx_{t}\right\} _{t=1}^{T}$,
as specified in Definition \ref{def:EMA}.
\end{rem}
\begin{defn}[{Random index distribution \citep[Definition 4]{pmlr-v267-ahn25a}}]
\label{def:tau}Given a discount factor $\beta_{1}\in\left[0,1\right)$,
let $\tau$ be a random index taking values in $\left[T\right]$ with
the following probability mass function:
\[
\Pr\left[\tau=t\right]\defeq\begin{cases}
\frac{1-\beta_{1}^{t}}{T} & t\in\left[T-1\right]\\
\frac{1}{1-\beta_{1}}\cdot\frac{1-\beta_{1}^{t}}{T} & t=T
\end{cases}.
\]
\end{defn}
\begin{rem}
In the remainder of the paper, when $\beta_{1}$ is clear from context,
the notation $\tau$ always denotes a random index distributed according
to the random index distribution in Definition \ref{def:tau} with
parameter $\beta_{1}$.
\end{rem}

%% file: algorithms.tex
\section{$\protect\Adam$ and Its Convergence Rates\label{sec:Adam}}

\begin{algorithm}[H]
\caption{\label{alg:Adam}$\protect\Adam$ \citep{kingma2014adam}}

\begin{algorithmic}[1]

\STATE \textbf{Input:} initial point $\bx_{1}\in\R^{d}$, two hyperparameters
$\beta_{1}\in\left[0,1\right)$ and $\beta_{2}\in\left[0,1\right)$,
learning rate $\left\{ \gamma_{t}\right\} _{t=1}^{T}$, numerical
constant $\epsilon\geq0$

\STATE Initialize $\bmm_{0}=\bzero$ and $\bv_{0}=\bzero$

\FOR{$t=1$ \textbf{to} $T$}

\STATE $\bg_{t}=\bg(\bx_{t};Z_{t})$ where $Z_{t}\sim\dis$ i.i.d.

\STATE $\bmm_{t}=\beta_{1}\bmm_{t-1}+(1-\beta_{1})\bg_{t}$

\STATE $\hat{\bmm}_{t}=\bmm_{t}/(1-\beta_{1}^{t})$

\STATE $\bv_{t}=\beta_{2}\bv_{t-1}+(1-\beta_{2})\bg_{t}^{2}$

\STATE $\hat{\bv}_{t}=\bv_{t}/(1-\beta_{2}^{t})$

\STATE $\bx_{t+1}=\bx_{t}-\gamma_{t}\hat{\bmm}_{t}/(\epsilon+\sqrt{\hat{\bv}_{t}})$

\ENDFOR

\end{algorithmic}
\end{algorithm}

We present the $\Adam$ method in Algorithm \ref{alg:Adam}. Introduced
by \citet{kingma2014adam}, $\Adam$ builds upon the seminal $\AdaGrad$
method \citep{DBLP:conf/colt/McMahanS10,JMLR:v12:duchi11a} and generalizes
the well-known $\RMSProp$ algorithm \citep{tieleman2012lecture}.
Despite its empirical success, a convergence rate for $\Adam$ in
nonsmooth nonconvex optimization is still unknown, even under finite-variance
noise, i.e., $\p=2$ in Assumption \ref{assu:heavy}. Later in this
section, we provide the first finite-time convergence rate for $\Adam$,
which further extends to heavy-tailed noise when the tail index $\p$
satisfies $\p\in\left(\nicefrac{4}{3},2\right]$, beyond the finite-variance
case.

Before moving on, we define an important quantity $\rho$, given by
the ratio of $\beta_{1}$ to $\sqrt{\beta_{2}}$, i.e.,
\begin{equation}
\rho\defeq\frac{\beta_{1}}{\sqrt{\beta_{2}}}.\label{eq:main-rho-def}
\end{equation}
$\rho$ is known to play an important role in understanding the convergence
behavior of $\Adam$ \citep{j.2018on,Zou_2019_CVPR,NEURIPS2022_b6260ae5,NEURIPS2023_7ac19fdc,10.1145/3637528.3671718}.
In particular, we are interested in the regime
\begin{equation}
\rho\in\left[0,1\right),\label{eq:main-rho-range}
\end{equation}
which aligns with the existing literature.

Now we are ready to present the main result, Theorem \ref{thm:Adam-rate},
which provides a finite-time convergence bound for $\Adam$ in nonsmooth
nonconvex optimization under heavy-tailed noise.
\begin{thm}
\label{thm:Adam-rate}Under Assumptions \ref{assu:lb}, \ref{assu:FTC},
\ref{assu:lip}, \ref{assu:unbias}, and \ref{assu:heavy}, let $\Delta\defeq F(\bx_{1})-F_{\star}$,
with the learning rate $\left\{ \gamma_{t}=\gamma\alpha_{t}\right\} _{t=1}^{T}$
where $\gamma>0$ is a constant and $\left\{ \alpha_{t}\right\} _{t=1}^{T}$
is a sequence of independent random variables satisfying $\alpha_{t}\sim\ex(1),\forall t\in\left[T\right]$,
$\Adam$ (Algorithm \ref{alg:Adam}) guarantees that
\begin{align*}
\E_{\tau,\random}\left[\left\Vert \nabla F(\bar{\bx}_{\tau})\right\Vert _{1}^{\left[\lambda\right]}\right]\lesssim & \sqrt{\frac{d\epsilon\Delta}{\gamma T}}+\sqrt{\frac{(1-\beta_{1})(1-\beta_{1}+\frac{\beta_{1}}{T})}{\rho\sqrt{1-\rho^{2}}}}\left(\frac{\left\Vert \bG\right\Vert _{1}}{\sqrt{1-\beta_{2}}}+\frac{\left\Vert \bsig\right\Vert _{1}}{(1-\beta_{2})^{\frac{1}{\p}}}\right)\\
 & +\sqrt{\left(\frac{\sqrt{1-\beta_{2}}\Delta}{\gamma T}+\frac{(1-\beta_{1})(1-\beta_{1}+\frac{\beta_{1}}{T})d\epsilon}{\rho\sqrt{(1-\rho^{2})(1-\beta_{2})}}\right)\left(\frac{\left\Vert \bG\right\Vert _{1}}{\sqrt{1-\beta_{2}}}+\frac{\left\Vert \bsig\right\Vert _{1}}{(1-\beta_{2})^{\frac{1}{\p}}}\right)}\\
 & +\frac{\left\Vert \bsig\right\Vert _{1}}{(1-\beta_{1})^{\frac{1}{\p}}T}+\frac{\beta_{1}\lambda d\gamma^{2}}{(1-\beta_{2})(1-\rho^{2})},
\end{align*}
where $\random$ denotes the randomness arising from $\left\{ \alpha_{t}\right\} _{t=1}^{T}$
and $\left\{ Z_{t}\right\} _{t=1}^{T}$.
\end{thm}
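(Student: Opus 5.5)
We plan to cast $\Adam$ with the randomly scaled step $\gamma_t=\gamma\alpha_t$ as an instance of the $\EOTNC$ framework of \citet{pmlr-v235-zhang24k} (in the form used by \citet{pmlr-v267-ahn25a} and \citet{NEURIPS2024_ac8ec9b4}). Write the update as $\bx_{t+1}=\bx_t+\alpha_t\Delta_t$ with $\Delta_t\defeq-\gamma\hat{\bmm}_t/(\epsilon+\sqrt{\hat{\bv}_t})$. Since $\alpha_t\sim\ex(1)$, Assumption \ref{assu:FTC} together with unbiasedness gives the exact identity $\E[F(\bx_{t+1})-F(\bx_t)]=\E\langle\bg_{t+1},\Delta_t\rangle$. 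Summing over $t$ and inserting the $\beta_1$-discounted comparator, the standard $\EOTNC$ stability lemma (Lemma \ref{lem:EO2NC-stability}) will bound $\E_{\tau,\random}\|\nabla F(\bar{\bx}_\tau)\|_1^{[\lambda]}$ by three kinds of terms. The first is $\Delta/(\gamma T)$-type terms coming from telescoping. The second is a discounted linear regret of the online learner producing $\Delta_t$ against comparators $\bu=-D\,\sgn(\cdot)$ in an $\ell_\infty$-ball, with $D$ to be optimized. The third is a variance term $\lambda\E\|\by-\bar{\bx}_\tau\|_2^2$, which is controlled by an iterate-movement bound (Lemma \ref{lem:iterate-bound}). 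The random index $\tau$ of Definition \ref{def:tau} is exactly what makes the bias correction $1-\beta_1^t$ line up with the discounted sums. The $\|\bsig\|_1/((1-\beta_1)^{1/\p}T)$ term should come from the boundary index $t=T$, where the last gradient appears undiscounted.

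The core step is to recognize $\Delta_t$ as a per-coordinate discounted $\FTRL$ (or FTRL-like) iterate on the losses $\bg_s$ with weights $\beta_1^{t-s}$. Because $\beta_1=\beta_2$ is allowed (more generally $\rho<1$), $\hat{\bmm}_t/\sqrt{\hat{\bv}_t}$ is a ratio of discounted sums whose scales match. We then bound the discounted regret coordinatewise, in the spirit of $\AdaGrad$-type analysis: the regret against $\bu[i]$ is at most
\begin{equation*}
\frac{D^2}{\gamma}\Big(\epsilon+\sqrt{\hat{\bv}_T[i]}\Big)+\gamma\sum_t\rho^{\cdot}\frac{\hat{\bmm}_t^2[i]}{\epsilon+\sqrt{\hat{\bv}_t[i]}}.
\end{equation*}
The cross terms $\hat{\bmm}_t^2/\sqrt{\hat{\bv}_t}$ are handled by Cauchy--Schwarz, giving $|\hat{\bmm}_t[i]|\le\sqrt{\hat{\bv}_t[i]}\cdot\sqrt{(1-\beta_1)^2/((1-\beta_2)(1-\rho^2))}$ up to bias-correction factors. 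This is where $\rho\sqrt{1-\rho^2}$ and the factor $(1-\beta_1)(1-\beta_1+\beta_1/T)$ enter.

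For $\E\sqrt{\hat{\bv}_t[i]}$ we split $\bg_t=\nabla F+\bxi$. The gradient part is bounded by $\bG[i]$. For the noise part under only a $\p$-th moment, we use the subadditivity $\sqrt{\sum a_s^2}\le(\sum|a_s|^\p)^{1/\p}$ and Jensen. This gives $\E\sqrt{\hat{\bv}_t[i]}\lesssim \bG[i]+\bsig[i](1-\beta_2)^{1/2-1/\p}$, which after dividing by the $\sqrt{1-\beta_2}$ normalization yields the factor $\|\bG\|_1/\sqrt{1-\beta_2}+\|\bsig\|_1/(1-\beta_2)^{1/\p}$.

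Collecting terms, the bound takes the form $\frac{A}{D}+D\cdot B$-type expressions in the comparator radius $D$. Optimizing $D$ (or taking the square-root balancing) produces the three square-root terms in the statement, with $d\epsilon$ arising from the $\epsilon$ in the regularizer summed over $d$ coordinates. The final $\lambda$ term follows from Lemma \ref{lem:iterate-bound}: $\|\Delta_t\|_\infty\le\gamma\sqrt{(1-\beta_1)^2/((1-\beta_2)(1-\rho^2))}$ times bias factors, and $\E\alpha^2=2$, so $\lambda\E\|\bx_s-\bar{\bx}_t\|_2^2\lesssim\lambda d\gamma^2\beta_1/((1-\beta_2)(1-\rho^2))$.

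The main obstacle is the regret step. $\Adam$ is not literally $\FTRL$ with the comparator-aware scaling used by prior work: there is no clipping, the denominators use bias-corrected $\hat{\bv}_t$, and the learning rate is not monotone. So I would first try to bound $\sum_t\langle\bg_t,\Delta_t-\bu\rangle$ directly with a one-step potential $\Phi_t=\|\cdot\|^2_{(\epsilon+\sqrt{\hat{\bv}_t})/\gamma}$. The non-monotone denominator would be handled by absorbing increments through $\sqrt{\hat{\bv}_{t}}\ge\sqrt{\beta_2}\sqrt{\hat{\bv}_{t-1}}$ up to bias factors, which is exactly where the condition $\rho<1$ is needed.
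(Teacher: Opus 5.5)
\textbf{There is a genuine gap in the regret step.} Your outer architecture matches the paper's: $\Adam$ as $\EOTNC$ driven by an online learner, sign comparators $\bu_t=-U\sgn(\cdot)$, the Cauchy--Schwarz bound $|\hat\bmm_t[i]|\le\frac{1-\beta_1}{\sqrt{(1-\beta_2)(1-\rho^2)}}\sqrt{\hat\bv_t[i]}$ for the $\lambda$-term, the $\ell_2\le\ell_\p$ plus Jensen bound on $\E\sqrt{\hat\bv_t[i]}$, and a final optimization over $U$. The regret step, which you rightly call the crux, is set up in a way that would not yield the theorem.

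First, the diagnosis is off. $\Adam$ \emph{is} literally $\FTRL$ on the losses $\langle\beta_1^{-s}\bg_s,\cdot\rangle$ with the stepsize (\ref{eq:main-vector-eta-def}), bias correction included, and this stepsize is nonincreasing. Write $\bet_t[i]=\rho^t\eta_t/\bigl(\epsilon/\sqrt{\beta_2^t(1-\beta_2)/(1-\beta_2^t)}+\sqrt{\sum_{s\le t}\beta_2^{-s}\bg_s^2[i]}\bigr)$. The denominator is nondecreasing, and $\eta_t\ge\rho\eta_{t+1}$ by Lemma \ref{lem:eta-prop}; this is the precise form of your ``$\sqrt{\hat\bv_t}\ge\sqrt{\beta_2}\sqrt{\hat\bv_{t-1}}$ up to bias factors''. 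So the standard $\FTRL$ bound applies with no non-monotonicity penalty, and iterate-size terms like $\gamma\hat\bmm_t^2[i]/(\epsilon+\sqrt{\hat\bv_t[i]})$ never have to be paid.

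What your displayed bound omits is the genuine stability term $\bet_{s-1}[i]\beta_1^{-2s}\bg_s^2[i]$. It pairs the \emph{fresh} stochastic gradient with a stepsize whose denominator does not yet contain $\bg_s^2[i]$. With only a $\p$-th moment, nothing stops $\bg_s$ from dwarfing the history, so $\sum_s\bg_s^2[i]/\sqrt{\sum_{l<s}\beta_2^{s-l}\bg_l^2[i]}$ is uncontrolled. Falling back on $\epsilon$ would create $1/\epsilon$ terms that the statement does not have. Your first term is also missing a factor $(1-\beta_1^t)/(1-\beta_1)$. The correct term is $\frac{U^2(1-\beta_1^t)}{2\gamma(1-\beta_1)}\sum_i(\epsilon+\sqrt{\hat\bv_t[i]})$, and it is needed at every horizon $t$ with its own comparator $\bu_t$.

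Second, your plan to bound $\hat\bmm_t^2/\sqrt{\hat\bv_t}\le C^2\sqrt{\hat\bv_t}$ termwise and then sum has no AdaGrad-type telescoping, so it pays the whole effective horizon. With geometric weights it gives about $\gamma C^2(1-\rho)^{-1}\sum_i\E\sqrt{\hat\bv_t[i]}$. For $\beta_1=\beta_2=\beta$ (where $C=1$) that is a coefficient of order $\gamma\|\bG\|_1/(1-\beta)$, instead of the paper's $\gamma\|\bG\|_1/\sqrt{\beta(1-\beta)}$. After optimizing $U$, the residual term becomes of order $\|\bG\|_1$ rather than $(1-\beta)^{1/4}\|\bG\|_1/\beta^{1/4}$. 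So, as sketched, neither the rate nor even vanishing as $\beta\to1$ survives, and the $\rho\sqrt{1-\rho^2}$ factor never appears.

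\textbf{The missing idea} is the paper's use of the minimum in Lemma \ref{lem:FTRL-regret}. One branch uses $\|\bdelta_s-\bdelta_{s+1}\|_\infty\le 2D_{t+1}$; this is where the Cauchy--Schwarz iterate bound is really needed. The other branch uses $\bet_{s-1}[i]\rho^{-s}\le\eta_{s-1}/(\rho\sqrt{\sum_{l<s}\beta_2^{-l}\bg_l^2[i]})$ together with $\eta_{s-1}\le D_{t+1}$. The minimum of the two is at most $2\sqrt{2}D_{t+1}\beta_2^{-s}\bg_s^2[i]/(\rho\sqrt{\sum_{l\le s}\beta_2^{-l}\bg_l^2[i]})$, which now has $\bg_s$ in the denominator. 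Then $\sum_s a_s/\sqrt{\sum_{l\le s}a_l}\le 2\sqrt{\sum_s a_s}$ bounds the stability contribution to $\beta_1^t\reg_t(\bu_t)$ by $\frac{4\sqrt{2}D_{t+1}}{\rho}\sum_i\sqrt{\sum_{s\le t}\beta_2^{t-s}\bg_s^2[i]}$.

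This is where $1/\rho$ (the one-step stepsize lag) and $1/\sqrt{1-\rho^2}$ (through $D_\infty$) enter. The factor $1-\beta_1+\beta_1/T$ comes from the $\EOTNC$ weights $\sum_t(1-\beta_1\1[t\neq T])/T$, not from Cauchy--Schwarz.

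Two smaller omissions. Bounding $\E\|\sum_s\beta_1^{-s}\bxi_s\|_1$ under only a $\p$-th moment needs a von Bahr--Esseen martingale inequality. The resulting $(1-\beta_1)^{1/\cp}\|\bsig\|_1$ term must then be absorbed into the second term of the statement.
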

\begin{rem}
In what follows, as in Theorem \ref{thm:Adam-rate}, the notation
$\random$ always denotes the randomness arising from $\left\{ \alpha_{t}\right\} _{t=1}^{T}$
and $\left\{ Z_{t}\right\} _{t=1}^{T}$.
\end{rem}
\begin{proof}
We defer the proof to Section \ref{sec:analysis}.
\end{proof}

To the best of our knowledge, Theorem \ref{thm:Adam-rate} provides
the first convergence bound for the original $\Adam$ algorithm under
the corresponding setting. Even in the case $\p=2$, the result is
new, since none of the existing works \citep{pmlr-v235-ahn24b,NEURIPS2024_ac8ec9b4,pmlr-v313-nguyen26b,xie2026dynamic}
analyzes the original form of $\Adam$.

Before discussing Theorem \ref{thm:Adam-rate} in more detail, we
would like to mention one limitation of our result: the randomly scaled
learning rate. Although our results are proved for the default form
of $\Adam$, the learning rate in Theorem \ref{thm:Adam-rate} is
not standard, due to the multiplicative random coefficient $\alpha_{t}$,
and hence may be less compatible with practical implementations. However,
this random scaling is crucial and appears unavoidable under our current
proof technique (see Section \ref{sec:EO2NC+FTRL}). It is currently
unclear to us how to prove a finite-time convergence rate for $\Adam$
under a constant learning rate or another practical nonrandom schedule,
which we leave as an important future direction.

\subsection{Convergence Rate of $\protect\Adam$ with Known Problem-Dependent
Parameters}

Now, to better understand the rate in Theorem \ref{thm:Adam-rate},
we provide the following corollary, assuming that the value of the
tail index $\p$ and all other problem-dependent parameters are known.
\begin{cor}
\label{cor:Adam-known-rate-optimal-lr}Under the same setting as in
Theorem \ref{thm:Adam-rate}, with
\[
\gamma=\left\{ \frac{(1-\beta_{2})^{2}(1-\rho^{2})^{2}\Delta}{\beta_{1}^{2}\lambda^{2}dT}\left[\epsilon\lor\left(\frac{\left\Vert \bG\right\Vert _{1}}{d}+\frac{\left\Vert \bsig\right\Vert _{1}}{d(1-\beta_{2})^{\frac{2-\p}{2\p}}}\right)\right]\right\} ^{\frac{1}{5}},
\]
$\Adam$ (Algorithm \ref{alg:Adam}) guarantees that
\begin{align*}
\E_{\tau,\random}\left[\left\Vert \nabla F(\bar{\bx}_{\tau})\right\Vert _{1}^{\left[\lambda\right]}\right]\lesssim & \left[\frac{\beta_{1}\lambda d\Delta^{2}}{(1-\beta_{2})(1-\rho^{2})T^{2}}\left(d^{2}\epsilon^{2}+\left\Vert \bG\right\Vert _{1}^{2}+\frac{\left\Vert \bsig\right\Vert _{1}^{2}}{(1-\beta_{2})^{\frac{2-\p}{\p}}}\right)\right]^{\frac{1}{5}}\\
 & +\sqrt{\frac{(1-\beta_{1})(1-\beta_{1}+\frac{\beta_{1}}{T})}{\rho\sqrt{1-\rho^{2}}}}\left(\frac{\left\Vert \bG\right\Vert _{1}}{\sqrt{1-\beta_{2}}}+\frac{\left\Vert \bsig\right\Vert _{1}}{(1-\beta_{2})^{\frac{1}{\p}}}\right)\\
 & +\sqrt{\frac{(1-\beta_{1})(1-\beta_{1}+\frac{\beta_{1}}{T})d\epsilon}{\rho\sqrt{(1-\rho^{2})(1-\beta_{2})}}\left(\frac{\left\Vert \bG\right\Vert _{1}}{\sqrt{1-\beta_{2}}}+\frac{\left\Vert \bsig\right\Vert _{1}}{(1-\beta_{2})^{\frac{1}{\p}}}\right)}+\frac{\left\Vert \bsig\right\Vert _{1}}{(1-\beta_{1})^{\frac{1}{\p}}T}.
\end{align*}
Furthermore, with any $\epsilon\leq\frac{\left\Vert \bG\right\Vert _{1}}{d}+\frac{\left\Vert \bsig\right\Vert _{1}}{d(1-\beta_{2})^{\frac{2-\p}{2\p}}}$,
the above bound reduces to
\begin{align*}
\E_{\tau,\random}\left[\left\Vert \nabla F(\bar{\bx}_{\tau})\right\Vert _{1}^{\left[\lambda\right]}\right]\lesssim & \left[\frac{\beta_{1}\lambda d\Delta^{2}}{(1-\beta_{2})(1-\rho^{2})T^{2}}\left(\left\Vert \bG\right\Vert _{1}^{2}+\frac{\left\Vert \bsig\right\Vert _{1}^{2}}{(1-\beta_{2})^{\frac{2-\p}{\p}}}\right)\right]^{\frac{1}{5}}\\
 & +\sqrt{\frac{(1-\beta_{1})(1-\beta_{1}+\frac{\beta_{1}}{T})}{\rho\sqrt{1-\rho^{2}}}}\left(\frac{\left\Vert \bG\right\Vert _{1}}{\sqrt{1-\beta_{2}}}+\frac{\left\Vert \bsig\right\Vert _{1}}{(1-\beta_{2})^{\frac{1}{\p}}}\right)+\frac{\left\Vert \bsig\right\Vert _{1}}{(1-\beta_{1})^{\frac{1}{\p}}T}.
\end{align*}
\end{cor}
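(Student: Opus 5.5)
This corollary follows from Theorem \ref{thm:Adam-rate} by tuning $\gamma$; no new probabilistic argument is needed. I would group the bound of Theorem \ref{thm:Adam-rate} into three kinds of terms.
\begin{itemize}
\item[(a)] Terms that decrease in $\gamma$: $\sqrt{d\epsilon\Delta/(\gamma T)}$, and the piece $\sqrt{\frac{\sqrt{1-\beta_{2}}\Delta}{\gamma T}\cdot H}$, where $H\defeq\frac{\left\Vert \bG\right\Vert _{1}}{\sqrt{1-\beta_{2}}}+\frac{\left\Vert \bsig\right\Vert _{1}}{(1-\beta_{2})^{1/\p}}$.
\item[(b)] The increasing term $\frac{\beta_{1}\lambda d\gamma^{2}}{(1-\beta_{2})(1-\rho^{2})}$.
\item[(c)] The $\gamma$-free terms, which are carried over verbatim.
\end{itemize}
The split of (a) uses $\sqrt{a+b}\le\sqrt a+\sqrt b$ on the third term of Theorem \ref{thm:Adam-rate}. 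This separates the $\Delta/(\gamma T)$ part from the $\epsilon$ part, and the latter is exactly the third line of the corollary.

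Next I merge the two terms in (a). Note that $\sqrt{1-\beta_{2}}H=\left\Vert \bG\right\Vert _{1}+\left\Vert \bsig\right\Vert _{1}(1-\beta_{2})^{-\frac{2-\p}{2\p}}=dK$, where
\[
K\defeq\frac{\left\Vert \bG\right\Vert _{1}}{d}+\frac{\left\Vert \bsig\right\Vert _{1}}{d(1-\beta_{2})^{\frac{2-\p}{2\p}}}.
\]
This uses $\frac1\p-\frac12=\frac{2-\p}{2\p}$. Hence the two terms in (a) sum to $\lesssim\sqrt{d(\epsilon\lor K)\Delta/(\gamma T)}$.

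Writing $M\defeq\frac{\beta_{1}\lambda d}{(1-\beta_{2})(1-\rho^{2})}$, it remains to balance
\[
\sqrt{\frac{d(\epsilon\lor K)\Delta}{\gamma T}}\quad\text{against}\quad M\gamma^{2}.
\]
Setting them equal gives $\gamma^{5}=d(\epsilon\lor K)\Delta/(M^{2}T)$, which is exactly the stated $\gamma$. At this $\gamma$ both terms equal
\[
\left(M\,d^{2}(\epsilon\lor K)^{2}\Delta^{2}/T^{2}\right)^{1/5}.
\]
Bounding $d^{2}(\epsilon\lor K)^{2}\le d^{2}\epsilon^{2}+2\left\Vert \bG\right\Vert _{1}^{2}+2\left\Vert \bsig\right\Vert _{1}^{2}(1-\beta_{2})^{-\frac{2-\p}{\p}}$ yields the first line of the corollary.

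For the second claim, take $\epsilon\le K$. Then $\epsilon\lor K=K$, and the $d^{2}\epsilon^{2}$ term in the first line is dominated by $d^{2}K^{2}$. The third line is at most
\[
\sqrt{\frac{(1-\beta_{1})(1-\beta_{1}+\beta_{1}/T)}{\rho\sqrt{1-\rho^{2}}}\cdot\frac{dK}{\sqrt{1-\beta_{2}}}\cdot H}
=\sqrt{\frac{(1-\beta_{1})(1-\beta_{1}+\beta_{1}/T)}{\rho\sqrt{1-\rho^{2}}}}\,H,
\]
using again $dK=\sqrt{1-\beta_{2}}H$. This is the second line of the bound, so the third line is absorbed.

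The main obstacle is only bookkeeping: the identity $\sqrt{1-\beta_{2}}H=dK$, and checking that the exponents in $\gamma$ match. No step requires new estimates.
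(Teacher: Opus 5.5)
Your proposal is correct and is exactly the direct calculation from Theorem \ref{thm:Adam-rate} that the paper invokes. The steps all check out, including the exponent of the stated $\gamma$: splitting the third term, merging the $1/\gamma$ terms into $\sqrt{d(\epsilon\lor K)\Delta/(\gamma T)}$ via $\sqrt{1-\beta_{2}}\,H=dK$, balancing against the $\gamma^{2}$ term, and absorbing the $\epsilon$-line when $\epsilon\le K$.
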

\begin{proof}
With Theorem \ref{thm:Adam-rate}, a direct calculation yields Corollary
\ref{cor:Adam-known-rate-optimal-lr}.
\end{proof}

Corollary \ref{cor:Adam-known-rate-optimal-lr} establishes the convergence
rate of $\Adam$ with the optimally tuned learning rate $\gamma$
and a sufficiently small $\epsilon$ for fixed $\beta_{1}$ and $\beta_{2}$,
which is broadly consistent with common practice, as people typically
tune $\gamma$ carefully and choose $\epsilon$ to be small, e.g.,
$10^{-8}$, while keeping $\beta_{1}$ and $\beta_{2}$ fixed.

Next, we show that, when $\beta_{1}=\beta_{2}=\beta\in\left[0,1\right)$
and $\p\in\left(\nicefrac{4}{3},2\right]$, $\Adam$ guarantees both
a finite-time convergence rate (with a properly chosen $\beta$) and
asymptotic convergence (as $\beta\to1$).
\begin{cor}
\label{cor:Adam-known-rate-optimal-beta}Under the same setting as
in Corollary \ref{cor:Adam-known-rate-optimal-lr}, i.e., optimally
tuned $\gamma$ and sufficiently small $\epsilon$, with
\[
\beta_{1}=\beta_{2}=\beta\in\left[0,1\right),
\]
$\Adam$ (Algorithm \ref{alg:Adam}) guarantees that
\begin{align*}
\E_{\tau,\random}\left[\left\Vert \nabla F(\bar{\bx}_{\tau})\right\Vert _{1}^{\left[\lambda\right]}\right]\lesssim & \left[\frac{\beta\lambda d\Delta^{2}}{(1-\beta)^{2}T^{2}}\left(\left\Vert \bG\right\Vert _{1}^{2}+\frac{\left\Vert \bsig\right\Vert _{1}^{2}}{(1-\beta)^{\frac{2-\p}{\p}}}\right)\right]^{\frac{1}{5}}\\
 & +\sqrt{\frac{\sqrt{1-\beta}(1-\beta+\frac{\beta}{T})}{\sqrt{\beta}}}\left(\frac{\left\Vert \bG\right\Vert _{1}}{\sqrt{1-\beta}}+\frac{\left\Vert \bsig\right\Vert _{1}}{(1-\beta)^{\frac{1}{\p}}}\right)+\frac{\left\Vert \bsig\right\Vert _{1}}{(1-\beta)^{\frac{1}{\p}}T}.
\end{align*}
Furthermore, suppose that $\p\in\left(\nicefrac{4}{3},2\right]$:
\begin{itemize}
\item with $\beta=1-\min\left\{ \frac{1}{2},\max\left\{ \left(\frac{\sqrt{\lambda d}\Delta/\left\Vert \bG+\bsig\right\Vert _{1}^{\frac{3}{2}}}{T}\right)^{\frac{8\p}{19\p-12}},\frac{1}{T+1}\right\} \right\} $,
the above bound reduces to
\[
\E_{\tau,\random}\left[\left\Vert \nabla F(\bar{\bx}_{\tau})\right\Vert _{1}^{\left[\lambda\right]}\right]\lesssim\left(\frac{\sqrt{\lambda d}\Delta\left\Vert \bG+\bsig\right\Vert _{1}}{T}\right)^{\frac{2}{5}}+\frac{\left\Vert \bG+\bsig\right\Vert _{1}}{T^{\frac{3\p-4}{4\p}}}+\left(\frac{\sqrt{\lambda d}\Delta\left\Vert \bG+\bsig\right\Vert _{1}^{\frac{5\p}{3\p-4}}}{T}\right)^{\frac{2(3\p-4)}{19\p-12}},
\]
which implies a rate of $1/T^{\frac{2}{13}}$ under the finite-variance
noise, i.e., $\p=2$.
\item with any $\beta\in\left[0,1\right)$, the above bound guarantees that
\[
\limsup_{T\to\infty}\E_{\tau,\random}\left[\left\Vert \nabla F(\bar{\bx}_{\tau})\right\Vert _{1}^{\left[\lambda\right]}\right]\lesssim\frac{(1-\beta)^{\frac{1}{4}}\left\Vert \bG\right\Vert _{1}+(1-\beta)^{\frac{3}{4}-\frac{1}{\p}}\left\Vert \bsig\right\Vert _{1}}{\beta^{\frac{1}{4}}},
\]
which vanishes as $\beta\to1$.
\end{itemize}
\end{cor}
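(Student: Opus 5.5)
The plan is to specialize Corollary \ref{cor:Adam-known-rate-optimal-lr} (second bound, small $\epsilon$) to $\beta_1=\beta_2=\beta$, and then optimize over $\beta$ by balancing terms.

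\textbf{Specialization.} With $\beta_1=\beta_2=\beta$ we have $\rho=\beta/\sqrt{\beta}=\sqrt{\beta}$, hence $1-\rho^2=1-\beta$. Substituting gives the following:
\begin{itemize}
\item The factor $(1-\beta_2)(1-\rho^2)$ in the first term becomes $(1-\beta)^2$, and $\beta_1\lambda d$ becomes $\beta\lambda d$.
\item The prefactor $\frac{(1-\beta_1)(1-\beta_1+\beta_1/T)}{\rho\sqrt{1-\rho^2}}$ becomes $\frac{(1-\beta)(1-\beta+\beta/T)}{\sqrt\beta\sqrt{1-\beta}}=\frac{\sqrt{1-\beta}\,(1-\beta+\beta/T)}{\sqrt\beta}$.
\item The last term is $\|\bsig\|_1/((1-\beta)^{1/\p}T)$.
\end{itemize}
This yields the displayed bound. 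The case $\beta=0$ is degenerate because of the $1/\sqrt\beta$; for it I would appeal to the convention that the bound is read as $+\infty$ or trivially true.

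\textbf{Asymptotic bullet.} Fix $\beta\in[0,1)$ and let $T\to\infty$. The first term is $O(T^{-2/5})$ and the last is $O(T^{-1})$, so both vanish. In the middle term, $1-\beta+\beta/T\to1-\beta$, so it tends to
\[
\sqrt{(1-\beta)^{3/2}/\sqrt\beta}\,\Bigl(\|\bG\|_1(1-\beta)^{-1/2}+\|\bsig\|_1(1-\beta)^{-1/\p}\Bigr)
=\beta^{-1/4}\Bigl((1-\beta)^{1/4}\|\bG\|_1+(1-\beta)^{3/4-1/\p}\|\bsig\|_1\Bigr).
\]
Since $\p>4/3$, the exponent $3/4-1/\p$ is positive, so this limit vanishes as $\beta\to1$.

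\textbf{Choice of $\beta$.} Write $\delta=1-\beta\in[1/(T+1),1/2]$, so that $\beta\ge1/2$ and $\beta$-powers are absorbed into constants. Bound $\|\bG\|_1,\|\bsig\|_1\le\|\bG+\bsig\|_1=:M$ (coordinates are nonnegative), and use $\delta^{-(2-\p)/\p}\ge1$ and $\delta^{-1/\p}\ge\delta^{-1/2}$ to keep only the dominant noise powers. The bound then becomes, up to constants, the sum of three terms.
\begin{itemize}
\item \emph{Term (A):} $\bigl[\lambda d\Delta^2M^2\delta^{-2-(2-\p)/\p}T^{-2}\bigr]^{1/5}$.
\item \emph{Term (B):} $\sqrt{\delta^{1/2}(\delta+1/T)}\,M\delta^{-1/\p}$. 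Because $\delta\ge 1/(T+1)$, we have $\delta+1/T\lesssim\delta$, so (B) $\lesssim M\delta^{3/4-1/\p}$.
\item \emph{Term (C):} $M\delta^{-1/\p}T^{-1}$. Using $\delta\ge1/(T+1)$ again, (C) $\lesssim M\delta^{1-1/\p}\le M\delta^{3/4-1/\p}$, so it is dominated by (B).
\end{itemize}

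I would then do three cases on which branch of the min/max is active.
\begin{itemize}
\item If $\delta=1/2$, both (A) and (B) are at most $(\sqrt{\lambda d}\Delta M/T)^{2/5}$ type terms. Here the min being $1/2$ means $\sqrt{\lambda d}\Delta/M^{3/2}\gtrsim T$, which lets (B) $\lesssim M$ be absorbed into the first or third displayed term.
\item If $\delta=1/(T+1)$, (B) is $\lesssim M T^{-(3\p-4)/(4\p)}$, which is the second displayed term. The max choosing $1/(T+1)$ means the other candidate is smaller, which bounds (A) by (B)-type quantities.
\item If $\delta=(\sqrt{\lambda d}\Delta/(M^{3/2}T))^{8\p/(19\p-12)}$, equating the $\delta$-exponents of (A) and (B) should produce exactly this power. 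Substituting gives $M\delta^{(3\p-4)/(4\p)}$, which should equal the third displayed term. The check is $(3\p-4)/(4\p)\cdot 8\p/(19\p-12)=2(3\p-4)/(19\p-12)$, and the $M$-exponent works out as $1-\tfrac32\cdot\tfrac{2(3\p-4)}{19\p-12}\cdot$ (rescaled) $=\tfrac{5\p}{3\p-4}\cdot\tfrac{2(3\p-4)}{19\p-12}$.
\end{itemize}
For $\p=2$, the third term gives $T^{-2\cdot2/26}=T^{-2/13}$, which dominates.

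The main obstacle is this exponent bookkeeping in the balancing case, and making sure each boundary case is absorbed by the correct displayed term; everything else is direct substitution.
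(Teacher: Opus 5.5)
Your proposal is correct and follows the paper's route. The paper's proof is only the remark that the result follows directly from Corollary~\ref{cor:Adam-known-rate-optimal-lr}, and your steps supply exactly that omitted calculation: the substitution $\rho=\sqrt{\beta}$, the reduction of the bound to $[\lambda d\Delta^{2}M^{2}T^{-2}]^{1/5}\delta^{-(\p+2)/(5\p)}+M\delta^{(3\p-4)/(4\p)}$, and the monotonicity-based three-case analysis. The only loose ends are cosmetic: the ``(rescaled)'' in your $M$-exponent check should be dropped, since $1-\frac{3(3\p-4)}{19\p-12}=\frac{10\p}{19\p-12}$ holds exactly, and when $T=1$ the branch $\delta=1/2$ is active regardless of the other candidate, so there $M$ is absorbed by the second displayed term rather than via $\sqrt{\lambda d}\Delta/\|\bG+\bsig\|_1^{3/2}\gtrsim T$.
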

\begin{rem}
We note that the requirement of sufficiently small $\epsilon$ is
actually unnecessary for Corollary \ref{cor:Adam-known-rate-optimal-beta}
and is kept only to simplify the expression. For any $\epsilon\geq0$,
one can prove similar results for both the finite-time rate and asymptotic
convergence, which we leave to the interested reader.
\end{rem}
\begin{rem}
For finite-time convergence, a simpler choice $\beta=1-1/(T+1)^{\frac{8\p}{19\p-12}}$
also leads to a provable $1/T^{\frac{2(3\p-4)}{19\p-12}}$ rate, but
with different dependence on the other parameters.
\end{rem}
\begin{proof}
Corollary \ref{cor:Adam-known-rate-optimal-beta} follows directly
from Corollary \ref{cor:Adam-known-rate-optimal-lr}.
\end{proof}

There are several points about Corollary \ref{cor:Adam-known-rate-optimal-beta}
that we would like to discuss. First, the choice $\beta_{1}=\beta_{2}=\beta$
is inspired by the recent comprehensive empirical study of \citet{NEURIPS2025_5bd9aa20},
which suggests that $\beta_{1}=\beta_{2}$ leads to observed improvements
in the performance of $\Adam$. However, we emphasize that this choice
is not the only one that ensures convergence of $\Adam$. Nevertheless,
it yields the best finite-time bound that we have obtained so far.

Next, for the non-asymptotic rate, Corollary \ref{cor:Adam-known-rate-optimal-beta}
implies that the sample complexity of $\Adam$ to find a $(\lambda,\varepsilon)$-$L_{1}$-stationary
point is at most (assuming $\varepsilon\lesssim\left\Vert \bG+\bsig\right\Vert _{1}$
to simplify the discussion) $\frac{\left\Vert \bG+\bsig\right\Vert _{1}^{\frac{4\p}{3\p-4}}}{\varepsilon^{\frac{4\p}{3\p-4}}}+\frac{\sqrt{\lambda d}\Delta\left\Vert \bG+\bsig\right\Vert _{1}^{\frac{5\p}{3\p-4}}}{\varepsilon^{\frac{19\p-12}{2(3\p-4)}}}$,
which is the first theoretical guarantee for the original $\Adam$
algorithm in nonsmooth nonconvex optimization. In the widely considered
finite-variance case, i.e., $\p=2$, the above sample complexity reduces
to $\frac{\sqrt{\lambda d}\Delta\left\Vert \bG+\bsig\right\Vert _{1}^{5}}{\varepsilon^{\frac{13}{2}}}+\frac{\left\Vert \bG+\bsig\right\Vert _{1}^{4}}{\varepsilon^{4}}$,
which, however, is worse than the currently best-known bound $\frac{\sqrt{\lambda d}\Delta\left\Vert \bG+\bsig\right\Vert _{1}^{2}}{\varepsilon^{\frac{7}{2}}}+\frac{\left\Vert \bG+\bsig\right\Vert _{1}^{3}}{\varepsilon^{3}}$
proved for $\ClippedAdam$ \citep{NEURIPS2024_ac8ec9b4}. It remains
unclear whether the original $\Adam$ algorithm, that is, without
the extra clipping operation and with the bias-correction step, can
achieve this improved bound and thereby match the guarantee for $\ClippedAdam$.

Moreover, for asymptotic convergence, our result is also new for nonsmooth
nonconvex optimization and, importantly, is consistent with the existing
theoretical finding on $\Adam$ for smooth nonconvex optimization
\citep{NEURIPS2022_b6260ae5}\footnote{Strictly speaking, the noise model studied in \citet{NEURIPS2022_b6260ae5}
is not directly comparable with ours, due to differences in the objectives
considered in the two papers. However, for simplicity, we omit the
detailed discussion on this issue.}, which suggests $\beta_{2}\to1$ leads to provable convergence of
$\Adam$ when $T$ is sufficiently large, corresponding to taking
$\beta\to1$ in our setting with $\beta_{1}=\beta_{2}=\beta$. Notably,
as with the finite-time convergence rate, our result also extends
to the heavy-tailed noise regime where $\frac{4}{3}<\p\leq2$.

Lastly, we would like to mention that the threshold $\p=\frac{4}{3}$
has appeared in different recent works on heavy-tailed stochastic
optimization \citep{liu2026can,liu2026expectation}. It would be interesting
to investigate whether this value is an inherent barrier for such
problems.

\subsection{Convergence Rate of $\protect\Adam$ with Unknown Problem-Dependent
Parameters}

The previous corollaries are established under the assumption that
problem-dependent parameters are known a priori, which may not be
practical. Therefore, we turn our attention to the setting in which
the tail index $\p$ and other problem-dependent parameters are unknown.
Moreover, to save space, we focus directly on the case $\beta_{1}=\beta_{2}=\beta$
as before and establish the following corollary.
\begin{cor}
\label{cor:Adam-unknown-rate}Under the same setting as in Theorem
\ref{thm:Adam-rate}, with $\beta_{1}=\beta_{2}=\beta\in\left[0,1\right)$,
$\epsilon\leq\frac{\left\Vert \bG+\bsig\right\Vert _{1}}{d}$, and
$\gamma=\left[\frac{(1-\beta)^{4}C}{\beta^{2}\lambda^{2}d^{2}T}\right]^{\frac{1}{5}}$
where $C>0$, $\Adam$ (Algorithm \ref{alg:Adam}) guarantees that
\begin{align*}
\E_{\tau,\random}\left[\left\Vert \nabla F(\bar{\bx}_{\tau})\right\Vert _{1}^{\left[\lambda\right]}\right]\lesssim & \left[\frac{\beta\lambda d}{(1-\beta)^{\frac{3}{4}+\frac{5}{2\p}}T^{2}}\sqrt{C^{4}+\frac{\Delta^{5}\left\Vert \bG+\bsig\right\Vert _{1}^{5}}{C}}\right]^{\frac{1}{5}}\\
 & +\sqrt{\frac{\sqrt{1-\beta}(1-\beta+\frac{\beta}{T})}{\sqrt{\beta}}}\left(\frac{\left\Vert \bG\right\Vert _{1}}{\sqrt{1-\beta}}+\frac{\left\Vert \bsig\right\Vert _{1}}{(1-\beta)^{\frac{1}{\p}}}\right)+\frac{\left\Vert \bsig\right\Vert _{1}}{(1-\beta)^{\frac{1}{\p}}T},
\end{align*}
Furthermore, suppose that $\p\in\left(\nicefrac{4}{3},2\right]$:
\begin{itemize}
\item with $\beta=1-(T+1)^{-\frac{8}{13}}$, the above bound reduces to
\[
\E_{\tau,\random}\left[\left\Vert \nabla F(\bar{\bx}_{\tau})\right\Vert _{1}^{\left[\lambda\right]}\right]\lesssim\left[\frac{\lambda d}{T^{\frac{20(\p-1)}{13\p}}}\sqrt{C^{4}+\frac{\Delta^{5}\left\Vert \bG+\bsig\right\Vert _{1}^{5}}{C}}\right]^{\frac{1}{5}}+\frac{\left\Vert \bG\right\Vert _{1}}{T^{\frac{2}{13}}}+\frac{\left\Vert \bsig\right\Vert _{1}}{T^{\frac{2(3\p-4)}{13\p}}},
\]
which implies a rate of $1/T^{\frac{2}{13}}$ under the finite-variance
noise, i.e., $\p=2$.
\item with any $\beta\in\left[0,1\right)$, the above bound guarantees that
\[
\limsup_{T\to\infty}\E_{\tau,\random}\left[\left\Vert \nabla F(\bar{\bx}_{\tau})\right\Vert _{1}^{\left[\lambda\right]}\right]\lesssim\frac{(1-\beta)^{\frac{1}{4}}\left\Vert \bG\right\Vert _{1}+(1-\beta)^{\frac{3}{4}-\frac{1}{\p}}\left\Vert \bsig\right\Vert _{1}}{\beta^{\frac{1}{4}}},
\]
which vanishes as $\beta\to1$.
\end{itemize}
\end{cor}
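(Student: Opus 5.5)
The plan is to derive Corollary \ref{cor:Adam-unknown-rate} directly from Theorem \ref{thm:Adam-rate} by substitution, without repeating the analysis. The substitution $\beta_{1}=\beta_{2}=\beta$ gives $\rho=\sqrt{\beta}$ and $1-\rho^{2}=1-\beta$. The second term of Theorem \ref{thm:Adam-rate} then becomes
\[
\sqrt{\frac{(1-\beta)(1-\beta+\frac{\beta}{T})}{\sqrt{\beta}\sqrt{1-\beta}}}\,S,\qquad S\defeq\frac{\left\Vert \bG\right\Vert _{1}}{\sqrt{1-\beta}}+\frac{\left\Vert \bsig\right\Vert _{1}}{(1-\beta)^{\frac{1}{\p}}}.
\]
This is exactly the second term of the corollary. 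The last noise term is unchanged, and the final term becomes $\beta\lambda d\gamma^{2}/(1-\beta)^{2}$.

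Next I would use $\epsilon\leq\left\Vert \bG+\bsig\right\Vert _{1}/d$, which gives $d\epsilon\leq\left\Vert \bG+\bsig\right\Vert _{1}$. Writing $K\defeq\left\Vert \bG+\bsig\right\Vert _{1}$, I would use $S\leq K/(1-\beta)^{\frac{1}{\p}}$, since $\frac{1}{\p}\geq\frac{1}{2}$. The remaining terms are handled as follows.

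\begin{itemize}
\item The $\epsilon$-part of the third square root is $\sqrt{(1-\beta+\frac{\beta}{T})\sqrt{1-\beta}\,K S/(\sqrt{\beta}\sqrt{1-\beta})}$. It is dominated by the second term times a constant, using $K\lesssim\sqrt{1-\beta}\,S$ and $1-\beta\leq1$.
\item The first term $\sqrt{d\epsilon\Delta/(\gamma T)}\leq\sqrt{K\Delta/(\gamma T)}$ is dominated by the $\Delta$-part of the third term, $\sqrt{\sqrt{1-\beta}\,\Delta S/(\gamma T)}$. This holds because $\sqrt{1-\beta}\,S\geq K$ up to constants.
\end{itemize}

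This reduces everything, up to $\lesssim$, to
\[
\sqrt{\frac{\Delta K}{(1-\beta)^{\frac{1}{\p}-\frac{1}{2}}\gamma T}}+\frac{\beta\lambda d\gamma^{2}}{(1-\beta)^{2}}
\]
plus the retained terms.

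Now I would plug in $\gamma^{5}=(1-\beta)^{4}C/(\beta^{2}\lambda^{2}d^{2}T)$. The term $\beta\lambda d\gamma^{2}/(1-\beta)^{2}$ becomes $\left[\beta\lambda d\,C^{2}/((1-\beta)^{2}T^{2})\right]^{\frac{1}{5}}$ after raising to the fifth power and taking the fifth root. The $\Delta$ term, raised to the fifth power, is
\[
\frac{\Delta^{5/2}K^{5/2}\,\beta\lambda d}{(1-\beta)^{\frac{5}{2\p}-\frac{5}{4}+2}\,C^{1/2}\,T^{2}}.
\]
After checking exponents, both terms fit inside
\[
\left[\frac{\beta\lambda d}{(1-\beta)^{\frac{3}{4}+\frac{5}{2\p}}T^{2}}\left(C^{2}+\frac{\Delta^{5/2}K^{5/2}}{\sqrt{C}}\right)\right]^{\frac{1}{5}}.
\]
For the $C^{2}$ piece this uses $(1-\beta)^{-2}\leq(1-\beta)^{-\frac{3}{4}-\frac{5}{2\p}}$, valid since $\frac{3}{4}+\frac{5}{2\p}\geq2$ for $\p\leq2$. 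Finally $a+b\leq\sqrt{2(a^{2}+b^{2})}$ gives the stated $\sqrt{C^{4}+\Delta^{5}K^{5}/C}$ form.

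For the specializations, set $1-\beta=(T+1)^{-8/13}$, so $\beta\geq\frac{1}{2}$ for $T\geq1$ (adjusting constants).

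\begin{itemize}
\item In the second term, $1-\beta+\frac{\beta}{T}\lesssim(1-\beta)$ since $\frac{1}{T}\leq(1-\beta)$. The term becomes $(1-\beta)^{3/4}S$, i.e.
\[
\left\Vert \bG\right\Vert _{1}(1-\beta)^{\frac{1}{4}}+\left\Vert \bsig\right\Vert _{1}(1-\beta)^{\frac{3}{4}-\frac{1}{\p}},
\]
which gives $T^{-2/13}$ and $T^{-2(3\p-4)/(13\p)}$.
\item The last term is $\left\Vert \bsig\right\Vert _{1}(1-\beta)^{-1/\p}/T$, which is smaller than the preceding ones.
\item In the first bracket, $(1-\beta)^{-\frac{3}{4}-\frac{5}{2\p}}T^{-2}$ gives the exponent $2-\frac{8}{13}(\frac{3}{4}+\frac{5}{2\p})=\frac{20}{13}-\frac{20}{13\p}=\frac{20(\p-1)}{13\p}$.
\end{itemize}

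For the $\limsup$ with $\beta$ fixed, the first and last terms vanish as $T\to\infty$. The second term tends to
\[
\sqrt{(1-\beta)^{3/2}/\sqrt{\beta}}\,S=\frac{(1-\beta)^{\frac{1}{4}}\left\Vert \bG\right\Vert _{1}+(1-\beta)^{\frac{3}{4}-\frac{1}{\p}}\left\Vert \bsig\right\Vert _{1}}{\beta^{\frac{1}{4}}}.
\]

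The main obstacle is the bookkeeping of $(1-\beta)$ exponents in the fifth-power combination. In particular, the $C^{2}$ term and the $\Delta$ term must share the common prefactor $(1-\beta)^{-\frac{3}{4}-\frac{5}{2\p}}$, which requires the inequality $\p\leq2$. Also, the edge case $\beta=0$ (where $\rho=0$ makes Theorem \ref{thm:Adam-rate} vacuous) should be noted as trivially consistent.
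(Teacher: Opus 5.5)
Your proposal is correct and takes the same route as the paper, which states only that the corollary follows directly from Theorem \ref{thm:Adam-rate}; your details (substituting $\rho=\sqrt{\beta}$, using $K\le\sqrt{1-\beta}\,S\le K(1-\beta)^{\frac{1}{2}-\frac{1}{\p}}$ to absorb the $\epsilon$-terms and isolate the $\Delta$-term, and checking $\frac{3}{4}+\frac{5}{2\p}\ge 2$ for $\p\le 2$) are exactly what the substitution requires. Two claims are not literally true at $T=1$, where $\beta=1-2^{-8/13}\approx 0.35<\tfrac{1}{2}$ and $1/T>1-\beta$, but they hold up to absolute constants ($\beta\ge 1-2^{-8/13}$ and $1/T\le 2(T+1)^{-8/13}$), which is all the $\lesssim$ bound needs.
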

\begin{rem}
To clarify, although the value of $\left\Vert \bG+\bsig\right\Vert _{1}$
is assumed unknown in this subsection, we allow $\epsilon$ to satisfy
the inequality involving $\left\Vert \bG+\bsig\right\Vert _{1}$,
since $\epsilon$ is typically very small in practice and can indeed
satisfy the condition.
\end{rem}
\begin{proof}
Corollary \ref{cor:Adam-unknown-rate} follows from Theorem \ref{thm:Adam-rate}
directly.
\end{proof}

Like Corollary \ref{cor:Adam-known-rate-optimal-beta}, Corollary
\ref{cor:Adam-unknown-rate} establishes both non-asymptotic and asymptotic
guarantees. The key difference is that the choices of $\beta$ and
$\gamma$ no longer depend on the tail index $\p$ or on the problem-dependent
quantities $\Delta$, $\bG$, and $\bsig$. The free parameter $C>0$
in the learning rate does not affect the order of $T$ in the final
rate. If the problem-dependent quantities were known, $C$ could be
tuned to balance the two terms $C^{4}$ and $\frac{\Delta^{5}\left\Vert \bG+\bsig\right\Vert _{1}^{5}}{C}$.
However, as expected, the finite-time rate weakens to $1/T^{\frac{2(3\p-4)}{13\p}}$,
since the value of $\p$ is no longer known. Finally, the asymptotic
result is the same as in Corollary \ref{cor:Adam-known-rate-optimal-beta}
and retains the residual term $\frac{(1-\beta)^{\frac{1}{4}}\left\Vert \bG\right\Vert _{1}+(1-\beta)^{\frac{3}{4}-\frac{1}{\p}}\left\Vert \bsig\right\Vert _{1}}{\beta^{\frac{1}{4}}}$.

\section{Understanding Adam via the Online-to-Nonconvex Conversion\label{sec:EO2NC+FTRL}}

In this section, we first present two existing algorithms from the
literature and then provide basic theoretical results for both of
them. Finally, we recall an important fact: when combined, these two
algorithms recover $\Adam$.

\subsection{Exponentiated Online-to-Nonconvex Conversion}

\begin{algorithm}[H]
\caption{\label{alg:EO2NC}Exponentiated Online-to-Nonconvex Conversion \citep{pmlr-v235-zhang24k,NEURIPS2024_ac8ec9b4}}

\begin{algorithmic}[1]

\STATE \textbf{Input:} OCO algorithm $\A$, initial point $\bx_{1}\in\R^{d}$,
hyperparameter $\beta_{1}\in\left[0,1\right)$

\FOR{$t=1$ \textbf{to} $T$}

\STATE $\bg_{t}=\bg(\bx_{t};Z_{t})$ where $Z_{t}\sim\dis$ i.i.d.

\STATE Send $\ell_{t}(\bdelta)\defeq\left\langle \beta_{1}^{-t}\bg_{t},\bdelta\right\rangle $
to $\A$

\STATE Receive $\bdelta_{t+1}$ from $\A$

\STATE $\bx_{t+1}=\bx_{t}+\alpha_{t}\bdelta_{t+1}$ where $\alpha_{t}\sim\ex(1)$
i.i.d.

\ENDFOR

\end{algorithmic}
\end{algorithm}

\begin{rem}
The $\EOTNC$ framework presented in Algorithm \ref{alg:EO2NC} differs
slightly from its original form in \citet{pmlr-v235-zhang24k} and
instead follows the version given in \citet{NEURIPS2024_ac8ec9b4},
where it is referred to as Discounted-to-Nonconvex Conversion ($\DTNC$).
However, these two methods are essentially interchangeable in the
following sense: $\EOTNC$ recovers $\DTNC$ by setting the regularization
term in $\EOTNC$ to zero, and $\DTNC$ recovers $\EOTNC$ by using
an OCO algorithm $\A$ that incorporates a regularization term. Therefore,
in this work, we retain the name $\EOTNC$ for simplicity.
\end{rem}
We present the Exponentiated Online-to-Nonconvex Conversion ($\EOTNC$)
method in Algorithm \ref{alg:EO2NC}. The $\EOTNC$ framework was
introduced by \citet{pmlr-v235-zhang24k} and \citet{NEURIPS2024_ac8ec9b4},
building on the original Online-to-Nonconvex Conversion ($\OTNC$)
framework of \citet{pmlr-v202-cutkosky23a}. The key distinction between
$\EOTNC$ and $\OTNC$ is that $\EOTNC$ always queries the stochastic
gradient at the current iterate $\bx_{t}$, whereas $\OTNC$ evaluates
the stochastic gradient at a point different from $\bx_{t}$. For
more details on the differences between $\EOTNC$ and $\OTNC$ and
the corresponding design principles, we refer the reader to \citet{pmlr-v235-zhang24k}
(or \citet{NEURIPS2024_ac8ec9b4}) and \citet{pmlr-v202-cutkosky23a},
respectively.

Next, we state two useful results for $\EOTNC$ in Lemmas \ref{lem:EO2NC-gradient}
and \ref{lem:EO2NC-stability}.
\begin{lem}
\label{lem:EO2NC-gradient}Under Assumptions \ref{assu:lb}, \ref{assu:FTC},
\ref{assu:unbias}, and \ref{assu:heavy}, let $\Delta\defeq F(\bx_{1})-F_{\star}$,
$U>0$ be a constant, and $\left\{ \bu_{t}\right\} _{t=1}^{T}$ be
a sequence of vectors defined as 
\begin{equation}
\bu_{t}[i]\defeq-U\cdot\sgn\left(\sum_{s=1}^{t}\beta_{1}^{-s}\nabla_{i}F(\bx_{s})\right),\forall i\in\left[d\right],\forall t\in\left[T\right],\label{eq:main-u-def}
\end{equation}
then $\EOTNC$ (Algorithm \ref{alg:EO2NC}) guarantees that
\[
\E_{\tau,\random}\left[\left\Vert \E_{\by_{\tau}}\left[\nabla F(\by_{\tau})\right]\right\Vert _{1}\right]\leq\frac{\Delta}{UT}+\sum_{t=1}^{T}\frac{1-\beta_{1}\1\left[t\neq T\right]}{UT}\E_{\random}\left[\beta_{1}^{t}\reg_{t}(\bu_{t})\right]+\frac{2\left\Vert \bsig\right\Vert _{1}}{(1-\beta_{1})^{\frac{1}{\p}}T}+2(1-\beta_{1})^{\frac{1}{\cp}}\left\Vert \bsig\right\Vert _{1},
\]
where
\begin{itemize}
\item $\left\{ \by_{t}\right\} _{t=1}^{T}$ is a sequence of random vectors
satisfying $\Pr\left[\by_{t}=\bx_{s}\right]=\frac{1-\beta_{1}}{1-\beta_{1}^{t}}\beta_{1}^{t-s},\forall s\in\left[t\right],\forall t\in\left[T\right]$.
\item $\reg_{t}(\bu)\defeq\sum_{s=1}^{t}\ell_{s}(\bdelta_{s})-\ell_{s}(\bu)=\sum_{s=1}^{t}\left\langle \beta_{1}^{-s}\bg_{s},\bdelta_{s}-\bu\right\rangle ,\forall\bu\in\R^{d},\forall t\in\left[T\right]$.
\end{itemize}
\end{lem}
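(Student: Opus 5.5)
The plan is to combine the exponential-scaling identity of $\EOTNC$ with an Abel summation that produces exactly the weights of the random index $\tau$. I would then compare the regret against the sign comparators $\bu_{t}$ in \eqref{eq:main-u-def}.

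\emph{Step 1 (exponential scaling).} For $\alpha\sim\ex(1)$ and $h(a)\defeq F(\bx+a\bdelta)$, integration by parts gives
\[
\E_{\alpha}[h(\alpha)-h(0)]=\int_{0}^{\infty}e^{-a}h'(a)\,\d a=\E_{\alpha}\left[h'(\alpha)\right].
\]
The first equality uses Assumption \ref{assu:FTC} to write $h(a)-h(0)=\int_{0}^{a}h'$, and $h$ grows at most linearly, which I would check when writing the proof. Applying this conditionally on the past with $\bx=\bx_{t}$ and $\bdelta=\bdelta_{t+1}$ yields
\[
\E\left[F(\bx_{t+1})-F(\bx_{t})\right]=\E\left\langle \nabla F(\bx_{t+1}),\bdelta_{t+1}\right\rangle .
\]
Telescoping over $t\in[T-1]$ and adopting the convention $\bdelta_{1}=\bzero$ (the initial output of $\A$) gives
\[
-\Delta\leq\E\left[F(\bx_{T})-F(\bx_{1})\right]=\sum_{t=1}^{T}\E\left\langle \nabla F(\bx_{t}),\bdelta_{t}\right\rangle .
\]
Since $\bdelta_{t}$ is measurable with respect to the past and $\bg_{t}$ is queried at $\bx_{t}$, Assumption \ref{assu:unbias} lets me replace $\nabla F(\bx_{t})$ by $\bg_{t}$ inside the expectation.

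\emph{Step 2 (Abel summation).} Write $\left\langle \bg_{t},\bdelta_{t}\right\rangle =\beta_{1}^{t}(a_{t}-a_{t-1})$, where $a_{t}\defeq\sum_{s\leq t}\left\langle \beta_{1}^{-s}\bg_{s},\bdelta_{s}\right\rangle $ and $a_{0}=0$. Summation by parts gives
\[
\sum_{t=1}^{T}\beta_{1}^{t}(a_{t}-a_{t-1})=\sum_{t=1}^{T}\left(1-\beta_{1}\1[t\neq T]\right)\beta_{1}^{t}a_{t}.
\]
Next decompose each partial sum against the comparator,
\[
a_{t}=\reg_{t}(\bu_{t})+\left\langle \bS_{t},\bu_{t}\right\rangle ,\qquad \bS_{t}\defeq\sum_{s\leq t}\beta_{1}^{-s}\bg_{s}.
\]
Split $\bS_{t}$ into $\sum_{s\le t}\beta_{1}^{-s}\nabla F(\bx_{s})$ plus the noise sum $\sum_{s\le t}\beta_{1}^{-s}\bxi_{s}$. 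By the sign choice of $\bu_{t}$, the gradient part contributes exactly $-U\|\sum_{s\leq t}\beta_{1}^{-s}\nabla F(\bx_{s})\|_{1}$. The noise part is at most $U\|\sum_{s\leq t}\beta_{1}^{-s}\bxi_{s}\|_{1}$.

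\emph{Step 3 (matching the weights).} By the definition of $\by_{t}$,
\[
\beta_{1}^{t}\sum_{s\leq t}\beta_{1}^{-s}\nabla F(\bx_{s})=\frac{1-\beta_{1}^{t}}{1-\beta_{1}}\,\E_{\by_{t}}\left[\nabla F(\by_{t})\right].
\]
The resulting coefficient $\left(1-\beta_{1}\1[t\neq T]\right)\frac{1-\beta_{1}^{t}}{1-\beta_{1}}$ equals $T\cdot\Pr[\tau=t]$ from Definition \ref{def:tau}. Rearranging and dividing by $UT$ then gives the first two terms of the bound.

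\emph{Step 4 (noise term, the main obstacle).} This requires bounding
\[
\E\left\|\sum_{s\leq t}\beta_{1}^{t-s}\bxi_{s}\right\|_{1}
\]
under only a $\p$-th moment. Coordinatewise this is a martingale difference sum, so I would apply a von Bahr--Esseen type inequality:
\[
\E\left|\sum_{s\leq t}\beta_{1}^{t-s}\bxi_{s}[i]\right|\leq\left(2\sum_{s\leq t}\beta_{1}^{\p(t-s)}\bsig^{\p}[i]\right)^{\frac{1}{\p}}\leq\frac{2\bsig[i]}{(1-\beta_{1})^{\frac{1}{\p}}}.
\]
Here I use $1-\beta_{1}^{\p}\geq1-\beta_{1}$. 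The bound splits according to the coefficient:
\begin{itemize}
\item For $t<T$ the coefficient is $1-\beta_{1}$, so each term is at most $2(1-\beta_{1})^{\frac{1}{\cp}}\left\Vert \bsig\right\Vert _{1}$. Averaging over the $T$ indices produces the last term of the lemma.
\item For $t=T$ the coefficient is $1$, giving $\frac{2\left\Vert \bsig\right\Vert _{1}}{(1-\beta_{1})^{\frac{1}{\p}}T}$ after dividing by $T$.
\end{itemize}
Two points need care. First, the martingale structure must survive the $\alpha_{t}$ randomness, since $\bx_{s}$ depends on $\alpha_{1},\ldots,\alpha_{s-1}$. Second, the index bookkeeping in Step 1, namely $\bdelta_{1}=\bzero$ and stopping the telescoping at $\bx_{T}$, must line up with the definition of $\reg_{t}$.
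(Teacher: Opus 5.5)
Your proposal is correct and follows essentially the paper's proof. The shared steps are: the random-scaling identity $\E[F(\bx_{t})-F(\bx_{t-1})]=\E\langle \bg_{t},\bdelta_{t}\rangle$ (the paper imports it from Zhang et al.\ rather than re-deriving it); the same summation-by-parts weights $(1-\beta_{1}\1[t\neq T])\beta_{1}^{t}$ matched to $T\Pr[\tau=t]$; the sign comparator $\bu_{t}$; and the von Bahr--Esseen martingale bound for the noise, where your constant $2$ in place of $2^{2-\p}$ changes nothing.

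One caveat: the linear-growth check you defer in Step 1 does not follow from the lemma's listed hypotheses, because Assumption \ref{assu:lip} is not among them. It does hold in the Adam application, where $F$ is Lipschitz and $\bdelta_{t+1}$ is bounded, and the paper's cited identity tacitly needs the same kind of integrability.
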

\begin{proof}
See Appendix \ref{sec:EO2NC}.
\end{proof}

Lemma \ref{lem:EO2NC-gradient} provides the first theoretical guarantee
for $\EOTNC$ under heavy-tailed noise (i.e., Assumption \ref{assu:heavy}),
extending the existing result for finite-variance noise of \citet[Lemma 7]{NEURIPS2024_ac8ec9b4}.
This result can be viewed as a counterpart of the convergence bound
for the original $\OTNC$ method under heavy-tailed gradient noise,
which was recently proved by \citet{pmlr-v313-liu26a}.

Lemma \ref{lem:EO2NC-stability} below bounds the weighted sum of
the variances of $\left\{ \by_{t}\right\} _{t=1}^{T}$, which is required
to obtain convergence guarantees in terms of the $(\lambda,\varepsilon)$-$L_{1}$-stationary
point (see Definition \ref{def:stationary}).
\begin{lem}
\label{lem:EO2NC-stability}Let $\left\Vert \cdot\right\Vert $ be
any norm on $\R^{d}$, then $\EOTNC$ (Algorithm \ref{alg:EO2NC})
guarantees that
\[
\E_{\tau,\random,\by_{\tau}}\left[\left\Vert \by_{\tau}-\bar{\bx}_{\tau}\right\Vert ^{2}\right]\leq\frac{4\beta_{1}}{(1-\beta_{1})^{2}T}\sum_{t=1}^{T}\E_{\random}\left[\left\Vert \bdelta_{t+1}\right\Vert ^{2}\right],
\]
where $\left\{ \by_{t}\right\} _{t=1}^{T}$ is a sequence of random
vectors satisfying $\Pr\left[\by_{t}=\bx_{s}\right]=\frac{1-\beta_{1}}{1-\beta_{1}^{t}}\beta_{1}^{t-s},\forall s\in\left[t\right],\forall t\in\left[T\right]$.
\end{lem}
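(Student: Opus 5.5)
The plan is to reduce everything to displacements along the trajectory and then exchange the order of summation. By line 6 of Algorithm \ref{alg:EO2NC}, $\bx_{t}-\bx_{s}=\sum_{k=s}^{t-1}\alpha_{k}\bdelta_{k+1}$ for $s\leq t$. I will compare $\by_{t}$ with the last iterate $\bx_{t}$ rather than directly with its mean $\bar{\bx}_{t}$. The reason is that for a general norm the mean need not minimize the second moment, whereas for the Euclidean norm it does.

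\textbf{Step 1 (center at $\bx_t$).} Since $\E_{\by_{t}}[\by_{t}]=\bar{\bx}_{t}$, Jensen's inequality for the convex function $\left\Vert \cdot\right\Vert$ gives $\left\Vert \bx_{t}-\bar{\bx}_{t}\right\Vert \leq\E_{\by_{t}}\left\Vert \bx_{t}-\by_{t}\right\Vert$. Combining this with $\left\Vert a+b\right\Vert ^{2}\leq2\left\Vert a\right\Vert ^{2}+2\left\Vert b\right\Vert ^{2}$ yields
\[
\E_{\by_{t}}\left\Vert \by_{t}-\bar{\bx}_{t}\right\Vert ^{2}\leq4\,\E_{\by_{t}}\left\Vert \by_{t}-\bx_{t}\right\Vert ^{2}=4\sum_{s=1}^{t}w_{t,s}\left\Vert \bx_{t}-\bx_{s}\right\Vert ^{2},
\]
where $w_{t,s}\defeq\frac{1-\beta_{1}}{1-\beta_{1}^{t}}\beta_{1}^{t-s}$. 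This step accounts for the constant $4$ in the statement.

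\textbf{Step 2 (weighted Cauchy--Schwarz).} The triangle inequality gives $\left\Vert \bx_{t}-\bx_{s}\right\Vert \leq\sum_{k=s}^{t-1}\alpha_{k}\left\Vert \bdelta_{k+1}\right\Vert$. I then apply Cauchy--Schwarz with geometric weights $c_{k}>0$ (for instance $c_{k}$ proportional to $\beta_{1}^{(t-1-k)/2}$), so that
\[
\Big(\sum_{k=s}^{t-1}a_{k}\Big)^{2}\leq\Big(\sum_{k=s}^{t-1}c_{k}\Big)\sum_{k=s}^{t-1}\frac{a_{k}^{2}}{c_{k}}.
\]
The unweighted choice $c_{k}\equiv1$, which gives the factor $(t-s)$, also works, but it yields a slightly worse constant and no leading factor $\beta_{1}$. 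The geometric weights are meant to recover the sharp $\beta_{1}/(1-\beta_{1})^{2}$.

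\textbf{Step 3 (randomness of $\alpha_k$).} Next I take expectation over $\random$. The key point is that $\bdelta_{k+1}$ is determined by $\bg_{1},\dots,\bg_{k}$, hence by $Z_{1},\dots,Z_{k}$ and $\alpha_{1},\dots,\alpha_{k-1}$. It is therefore independent of $\alpha_{k}\sim\ex(1)$, so $\E_{\random}[\alpha_{k}^{2}\left\Vert \bdelta_{k+1}\right\Vert ^{2}]=2\,\E_{\random}\left\Vert \bdelta_{k+1}\right\Vert ^{2}$.

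\textbf{Step 4 (average over $\tau$ and swap sums).} Finally I average over $\tau$ with the law of Definition \ref{def:tau}, and exchange the order of summation so that each $\E\left\Vert \bdelta_{k+1}\right\Vert ^{2}$ collects a coefficient summed over all $t>k$ and $s\leq k$. The factor $\frac{1-\beta_{1}}{1-\beta_{1}^{t}}$ in $w_{t,s}$ cancels exactly against $\Pr[\tau=t]$:
\[
\Pr[\tau=t]\cdot\frac{1-\beta_{1}}{1-\beta_{1}^{t}}=
\begin{cases}
\frac{1-\beta_{1}}{T} & t<T,\\
\frac{1}{T} & t=T.
\end{cases}
\]
After this cancellation, the coefficient of each $\E\left\Vert \bdelta_{k+1}\right\Vert ^{2}$ becomes a double geometric series in $j=t-s\geq1$ and $t-k\geq1$. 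The $t<T$ terms carry an extra $(1-\beta_{1})$ factor; the $t=T$ term does not, but it is a single term with no sum over $t$. The target is to bound each coefficient by $\frac{\beta_{1}}{(1-\beta_{1})^{2}T}$ after absorbing the factor $2$ from Step 3, which gives the claimed inequality.

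The main obstacle is the constant bookkeeping in Step 4. I must choose $c_{k}$ so that the exponential-moment factor $2$ and the two regimes ($t<T$ versus $t=T$) still combine into exactly $\beta_{1}/(1-\beta_{1})^{2}$. If the sharp constant proves elusive, I will first settle for $c_{k}\equiv1$ and then tighten it.
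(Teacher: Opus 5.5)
\textbf{Gap.} Step~1 is the problem, and no bookkeeping in Step~4 can repair it. Once you pay the factor $4$ for centering at $\bx_t$, you still need $\E_{\tau,\random,\by_\tau}\|\by_\tau-\bx_\tau\|^2\le\frac{\beta_1}{(1-\beta_1)^2T}\sum_{t=1}^T\E_{\random}\|\bdelta_{t+1}\|^2$, and this inequality is false.

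Take a run with $\bdelta_{k+1}\equiv\bv$ for all $k$, for example an OCO algorithm that always plays $\bv$. Then $\bx_t-\bx_s=(\sum_{k=s}^{t-1}\alpha_k)\bv$, so $\E_{\random}\|\bx_t-\bx_s\|^2=\big((t-s)+(t-s)^2\big)\|\bv\|^2$. For fixed $\beta_1\in(0,1)$ and $T\to\infty$, the left side tends to $\E[J+J^2]\,\|\bv\|^2=\frac{2\beta_1}{(1-\beta_1)^2}\|\bv\|^2$, where $\Pr[J=j]=(1-\beta_1)\beta_1^j$ for $j\ge 0$. The right side is only $\frac{\beta_1}{(1-\beta_1)^2}\|\bv\|^2$.

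The same obstruction appears inside your scheme. For any positive weights, $(\sum_k c_k)(\sum_k 1/c_k)\ge(t-s)^2$, so the coefficients from Steps~2--4 sum over $k$ to at least $8\sum_t\Pr[\tau=t]\sum_s w_{t,s}(t-s)^2\to\frac{8\beta_1(1+\beta_1)}{(1-\beta_1)^2}$. The statement allows only $\frac{4\beta_1}{(1-\beta_1)^2}$ in total over the $T$ indices. So geometric weights cannot help. With $c_k\equiv 1$ your argument gives the bound with $8(1+\beta_1)$ in place of $4$. That is harmless for Theorem~\ref{thm:Adam-rate}, which holds up to constants, but it is not the lemma as stated. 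Steps~2 and~3 themselves are fine, and Step~3 is exactly the paper's final step.

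\textbf{How the paper avoids it.} The paper cites the pathwise bound \citep[Lemma A.5]{pmlr-v267-ahn25a}, $\E_{\tau,\by_\tau}\|\by_\tau-\bar{\bx}_\tau\|^2\le\frac{2\beta_1}{(1-\beta_1)^2T}\sum_{t=2}^T\|\bx_t-\bx_{t-1}\|^2$, and then does only your Step~3. The missing idea is that centering must cost a factor $1$, not $4$.

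For $\|\cdot\|_2$, the mean minimizes the second moment, so $\E_{\by_t}\|\by_t-\bar{\bx}_t\|_2^2\le\E_{\by_t}\|\by_t-\bx_t\|_2^2$. Now use the unweighted bound $\|\bx_t-\bx_s\|^2\le(t-s)\sum_{k=s}^{t-1}\|\bx_{k+1}-\bx_k\|^2$ and your cancellation of $\Pr[\tau=t]$ against $w_{t,s}$. Each $\|\bx_{k+1}-\bx_k\|^2$ then gets coefficient at most $\frac{1-\beta_1}{T}\sum_{n\ge1}n^2\beta_1^n=\frac{\beta_1(1+\beta_1)}{(1-\beta_1)^2T}$. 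The extra weight $\frac{1}{1-\beta_1}$ on $t=T$ is absorbed by extending the $t<T$ series past $T$. This also shows your remark is wrong that $c_k\equiv1$ loses the leading $\beta_1$: the $s=t$ terms vanish, so the series starts at $n=1$.

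Your worry about general norms is legitimate, since the paper handles it only in a footnote. A norm-free centering with constant $1$ is Jensen against an independent copy $\by_t'$ of $\by_t$: $\E\|\by_t-\bar{\bx}_t\|^2\le\E\|\by_t-\by_t'\|^2=2\sum_{s<s'}w_{t,s}w_{t,s'}\|\bx_{s'}-\bx_s\|^2$. With unweighted Cauchy--Schwarz, this gives coefficients at most $\frac{2(1-\beta_1)^2}{T}\sum_{0\le u<v}\beta_1^{u+v}(v-u)^2=\frac{2\beta_1}{(1-\beta_1)^2T}$. The truncation at $s\ge 1$ and the $t=T$ term only decrease these coefficients. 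After $\E_{\random}[\alpha_k^2]=2$, you get exactly the constant $4$.
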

\begin{proof}
By \citet[Lemma A.5]{pmlr-v267-ahn25a}\footnote{Although the lemma is stated in terms of the $L_{2}$ norm, upon closer
examination, it actually holds for any norm.}, we have $\E_{\tau,\by_{\tau}}\left[\left\Vert \by_{\tau}-\bar{\bx}_{\tau}\right\Vert ^{2}\right]\leq\frac{2\beta_{1}}{(1-\beta_{1})^{2}T}\sum_{t=2}^{T}\left\Vert \bx_{t}-\bx_{t-1}\right\Vert ^{2}$.
We obtain the desired inequality by substituting $\bx_{t}-\bx_{t-1}=\alpha_{t-1}\bdelta_{t}$,
taking expectations with respect to $\random$ on both sides, and
using $\E_{\random}\left[\alpha_{t-1}^{2}\left\Vert \bdelta_{t}\right\Vert ^{2}\right]\overset{(a)}{=}\E_{\random}\left[\alpha_{t-1}^{2}\right]\E_{\random}\left[\left\Vert \bdelta_{t}\right\Vert ^{2}\right]\overset{(b)}{=}2\E_{\random}\left[\left\Vert \bdelta_{t}\right\Vert ^{2}\right]$,
where $(a)$ follows from the independence of $\alpha_{t-1}$ and
$\bdelta_{t}$, and $(b)$ is by the fact that the second moment of
an $\ex(1)$ random variable is $2$.
\end{proof}

\subsection{Follow-the-Regularized-Leader}

\begin{algorithm}[H]
\caption{\label{alg:FTRL}Follow-the-Regularized-Leader}

\begin{algorithmic}[1]

\FOR{$t=1$ \textbf{to} $T$}

\STATE Choose $\bet_{t}\in\R_{>0}^{d}$

\STATE $\bdelta_{t+1}=\argmin_{\bdelta\in\R^{d}}\frac{1}{2}\left\Vert \bdelta\right\Vert _{\bet_{t}^{-1}}^{2}+\sum_{s=1}^{t}\ell_{s}(\bdelta)$

\ENDFOR

\end{algorithmic}
\end{algorithm}

\begin{rem}
\label{rem:FTRL-delta-eta}By convention, we set $\bdelta_{1}\defeq\bzero$.
Note that this definition is compatible with the update rule of $\FTRL$
for any choice of $\bet_{0}\in\R_{>0}^{d}$. In particular, throughout
the paper, we take $\bet_{0}\defeq\bet_{1}$.
\end{rem}
We present the classical Follow-the-Regularized-Leader ($\FTRL$)
method in Algorithm \ref{alg:FTRL}, a powerful online learning method
with broad applications (see, e.g., \citet{DBLP:conf/colt/Gordon99,DBLP:conf/colt/Shalev-ShwartzS06,DBLP:conf/colt/AbernethyHR08,DBLP:conf/colt/HazanK08,nesterov2009primal,orabona2015generalized,JMLR:v18:14-428}).
Note that $\FTRL$ can be stated more generally by replacing the regularizer
$\frac{1}{2}\left\Vert \bdelta\right\Vert _{\bet_{t}^{-1}}^{2}$ with
an abstract function $R_{t}(\bdelta)$. Since the quadratic regularization
is important for our purpose of recovering $\Adam$, we keep the current
form. For further details on $\FTRL$, we refer the interested reader
to, for example, \citet[Section 7]{orabona2019modern}.

Lemma \ref{lem:FTRL-regret} below provides a regret bound for $\FTRL$.
The proof is standard in the literature, but we include it in Appendix
\ref{sec:FTRL} to make the paper self-contained.
\begin{lem}
\label{lem:FTRL-regret}Suppose $\left\{ \ell_{t}\right\} _{t=1}^{T}$
is differentiable and convex for any $t\in\left[T\right]$ and $\left\{ \bet_{t}[i]\right\} _{t=1}^{T}$
is nonincreasing for any $i\in\left[d\right]$, then $\FTRL$ (Algorithm
\ref{alg:FTRL}) guarantees that, for any $t\in\left[T\right]$ and
$\bu\in\R^{d}$,
\[
\reg_{t}(\bu)\leq\frac{1}{2}\left\Vert \bu\right\Vert _{\bet_{t}^{-1}}^{2}+\sum_{i=1}^{d}\sum_{s=1}^{t}\left\Vert \bdelta_{s}-\bdelta_{s+1}\right\Vert _{\infty}\left|\nabla_{i}\ell_{s}(\bdelta_{s})\right|\land\frac{\bet_{s-1}[i]\left(\nabla_{i}\ell_{s}(\bdelta_{s})\right)^{2}}{2},
\]
where $\reg_{t}(\bu)\defeq\sum_{s=1}^{t}\ell_{s}(\bdelta_{s})-\ell_{s}(\bu),\forall t\in\left[T\right]$.
\end{lem}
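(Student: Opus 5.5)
The plan is the standard FTRL argument for quadratic, coordinate-wise regularizers with linear losses, $\ell_s(\bdelta)=\langle\beta_1^{-s}\bg_s,\bdelta\rangle$, adapted to time-varying $\bet_t$. Write $\psi_t(\bdelta)\defeq\frac{1}{2}\left\Vert \bdelta\right\Vert _{\bet_{t}^{-1}}^{2}$, so that $\bdelta_{t+1}=\argmin_{\bdelta}\psi_t(\bdelta)+\sum_{s\le t}\ell_s(\bdelta)$. Set $F_t(\bdelta)\defeq\psi_{t-1}(\bdelta)+\sum_{s=1}^{t-1}\ell_s(\bdelta)$, so that $\bdelta_t=\argmin F_t$; this holds for $t=1$ by Remark \ref{rem:FTRL-delta-eta}, with $\bet_0=\bet_1$ and $\bdelta_1=\bzero$.

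First I invoke the standard FTRL equality (e.g.\ \citet[Lemma 7.1]{orabona2019modern}):
\[
\reg_t(\bu)=\psi_t(\bu)-\min_{\bdelta}\psi_0(\bdelta)+\sum_{s=1}^{t}\left[F_s(\bdelta_s)-F_{s+1}(\bdelta_{s+1})+\ell_s(\bdelta_s)\right]+F_{t+1}(\bdelta_{t+1})-F_{t+1}(\bu).
\]
Since $\min\psi_0=0$, the first two terms contribute $\frac12\Vert\bu\Vert^2_{\bet_t^{-1}}$, and the last bracket is $\le0$ because $\bdelta_{t+1}$ minimizes $F_{t+1}$.

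The main step is bounding each summand $F_s(\bdelta_s)-F_{s+1}(\bdelta_{s+1})+\ell_s(\bdelta_s)$. Because $\bet_s\le\bet_{s-1}$ coordinatewise, we have $\psi_s\ge\psi_{s-1}$, and hence $F_{s+1}(\bdelta_{s+1})\ge F_s(\bdelta_{s+1})+\ell_s(\bdelta_{s+1})$. The summand is therefore at most
\[
F_s(\bdelta_s)-F_s(\bdelta_{s+1})+\ell_s(\bdelta_s)-\ell_s(\bdelta_{s+1}).
\]
Everything separates across coordinates. Let $g\defeq\nabla_i\ell_s(\bdelta_s)$, which is a constant since $\ell_s$ is linear, and write $a=\bdelta_s[i]$, $b=\bdelta_{s+1}[i]$, $\eta=\bet_{s-1}[i]$. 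Coordinate $i$ of $F_s$ is a $1/\eta$-strongly convex quadratic minimized at $a$, so its part of $F_s(\bdelta_s)-F_s(\bdelta_{s+1})$ equals $-\frac{(a-b)^2}{2\eta}$. The coordinate-$i$ contribution is then
\[
g(a-b)-\frac{(a-b)^2}{2\eta}.
\]
This is at most $\frac{\eta g^2}{2}$ by maximizing over $a-b$. It is also at most $|g|\,|a-b|\le|g|\Vert\bdelta_s-\bdelta_{s+1}\Vert_\infty$ by dropping the nonpositive term. Taking the minimum of the two bounds and summing over $i$ and $s$ gives the claimed bound.

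The only delicate points are two pieces of bookkeeping. One is the convention $\bet_0=\bet_1$, so that $F_1=\psi_0$ has minimizer $\bzero=\bdelta_1$. The other is that the monotonicity of $\bet$ is used exactly once, in $\psi_s\ge\psi_{s-1}$. I expect the per-coordinate decoupling, which makes the $\land$ take effect coordinatewise, to be the step needing the most care. It works because both $F_s$ and $\ell_s$ are sums of one-dimensional functions.
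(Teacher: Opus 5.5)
Your argument has the same structure as the paper's proof. You use the same decomposition from \citet[Lemma 7.1]{orabona2019modern}. You use the monotonicity of $\bet_t[i]$ to discard the regularizer increment, with $\bet_0=\bet_1$ covering $s=1$. You finish with the same two per-coordinate bounds: drop the negative quadratic, or maximize over $\bdelta_s[i]-\bdelta_{s+1}[i]$. The one real difference is scope. You assume the losses are linear, but the lemma is stated for arbitrary differentiable convex $\ell_t$. For such losses, $F_s=\psi_{s-1}+\sum_{r<s}\ell_r$ is neither quadratic nor coordinate-separable. So your identity $F_s(\bdelta_s)-F_s(\bdelta_{s+1})=-\frac{1}{2}\left\Vert \bdelta_s-\bdelta_{s+1}\right\Vert _{\bet_{s-1}^{-1}}^{2}$ and the ``everything separates'' step do not hold as written.

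The repair is the one the paper uses. $F_s+\ell_s$ is $1$-strongly convex with respect to $\left\Vert \cdot\right\Vert _{\bet_{s-1}^{-1}}$, being a strongly convex quadratic plus convex functions. Also $\nabla F_s(\bdelta_s)=\bzero$ by optimality. Together these give
\[
F_s(\bdelta_s)-F_s(\bdelta_{s+1})+\ell_s(\bdelta_s)-\ell_s(\bdelta_{s+1})\leq\left\langle \nabla\ell_s(\bdelta_s),\bdelta_s-\bdelta_{s+1}\right\rangle -\frac{1}{2}\left\Vert \bdelta_s-\bdelta_{s+1}\right\Vert _{\bet_{s-1}^{-1}}^{2}.
\]
The right-hand side is separable across coordinates whatever the losses are. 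Your per-coordinate argument then applies verbatim with $g=\nabla_i\ell_s(\bdelta_s)$. In other words, the decoupling should be done on this upper bound, not on $F_s$ itself. The paper only ever invokes the lemma with $\ell_t(\bdelta)=\left\langle \beta_1^{-t}\bg_t,\bdelta\right\rangle$, so your linear-case version covers every downstream use. It does not, however, prove the statement as written.
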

\begin{rem}
Differentiability is actually unnecessary for Lemma \ref{lem:FTRL-regret},
and one can replace $\nabla\ell_{s}(\bdelta_{s})$ with a subgradient
of $\ell_{s}$ at $\bdelta_{s}$. For the sake of notation, we keep
the differentiability assumption.
\end{rem}
\begin{proof}
See Appendix \ref{sec:FTRL}.
\end{proof}

\subsection{$\protect\Adam=\protect\EOTNC+\protect\FTRL$}

Finally, we recall a known result noted in the literature \citep{pmlr-v235-ahn24b,NEURIPS2024_ac8ec9b4}
that $\Adam$ with a randomly scaled learning rate $\left\{ \gamma_{t}=\gamma\alpha_{t}\right\} _{t=1}^{T}$,
where $\gamma>0$ and $\alpha_{t}\sim\ex(1)$ are i.i.d., coincides
with $\EOTNC$ when the input OCO algorithm is $\A=\FTRL$ equipped
with the stepsize sequence $\left\{ \bet_{t}\right\} _{t=1}^{T}$
defined by
\begin{equation}
\bet_{t}[i]\defeq\frac{\gamma\beta_{1}^{t}(1-\beta_{1})/(1-\beta_{1}^{t})}{\epsilon+\sqrt{(\sum_{s=1}^{t}\beta_{2}^{t-s}\bg_{s}^{2}[i])(1-\beta_{2})/(1-\beta_{2}^{t})}},\forall i\in\left[d\right],\forall t\in\left[T\right].\label{eq:main-vector-eta-def}
\end{equation}
Formally, the following fact holds.
\begin{fact}
\label{fact:equivalence}Let $\bx_{1}\in\R^{d}$, $\beta_{1}\in\left[0,1\right)$,
$\beta_{2}\in\left[0,1\right)$, $\gamma>0$, and $\epsilon\geq0$
be fixed and used as inputs whenever needed. The following two algorithms
produce the same trajectory $\left\{ \bx_{t}\right\} _{t=1}^{T+1}$:
\begin{itemize}
\item $\Adam$ with a randomly scaled learning rate $\left\{ \gamma_{t}=\gamma\alpha_{t}\right\} _{t=1}^{T}$,
where $\alpha_{t}\sim\ex(1)$ are i.i.d.
\item $\EOTNC$ with the OCO algorithm $\A=\FTRL$, in which the stepsize
$\left\{ \bet_{t}\right\} _{t=1}^{T}$ is defined in (\ref{eq:main-vector-eta-def}).
\end{itemize}
\end{fact}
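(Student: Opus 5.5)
We prove Fact \ref{fact:equivalence} by induction on $t$, showing that both procedures draw the same $Z_{t}$ and $\alpha_{t}$ and generate the same increment $\bx_{t+1}-\bx_{t}$. Both start from the same $\bx_{1}$. Suppose the two trajectories agree up to $\bx_{t}$, so they also agree on $\bg_{1},\mydots,\bg_{t}$. $\Adam$ with $\gamma_{t}=\gamma\alpha_{t}$ sets $\bx_{t+1}=\bx_{t}-\gamma\alpha_{t}\hat{\bmm}_{t}/(\epsilon+\sqrt{\hat{\bv}_{t}})$, while $\EOTNC$ sets $\bx_{t+1}=\bx_{t}+\alpha_{t}\bdelta_{t+1}$. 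It therefore suffices to verify the deterministic identity
\[
\bdelta_{t+1}=-\gamma\hat{\bmm}_{t}/(\epsilon+\sqrt{\hat{\bv}_{t}}),
\]
where $\bdelta_{t+1}$ is the $\FTRL$ output with stepsize (\ref{eq:main-vector-eta-def}) and losses $\ell_{s}(\bdelta)=\langle\beta_{1}^{-s}\bg_{s},\bdelta\rangle$.

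The $\FTRL$ objective is $\frac{1}{2}\sum_{i}\bdelta[i]^{2}/\bet_{t}[i]+\langle\sum_{s\le t}\beta_{1}^{-s}\bg_{s},\bdelta\rangle$. It is strongly convex and separable across coordinates because $\bet_{t}\in\R_{>0}^{d}$. Setting its gradient to zero gives the closed form
\[
\bdelta_{t+1}[i]=-\bet_{t}[i]\sum_{s=1}^{t}\beta_{1}^{-s}\bg_{s}[i].
\]
Next we unroll the moment recursions from $\bmm_{0}=\bv_{0}=\bzero$. This gives $\bmm_{t}=(1-\beta_{1})\sum_{s\le t}\beta_{1}^{t-s}\bg_{s}$, so $\hat{\bmm}_{t}=\frac{(1-\beta_{1})\beta_{1}^{t}}{1-\beta_{1}^{t}}\sum_{s\le t}\beta_{1}^{-s}\bg_{s}$. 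Similarly, $\hat{\bv}_{t}=\frac{1-\beta_{2}}{1-\beta_{2}^{t}}\sum_{s\le t}\beta_{2}^{t-s}\bg_{s}^{2}$. Substituting (\ref{eq:main-vector-eta-def}) shows that $\bet_{t}[i]\sum_{s}\beta_{1}^{-s}\bg_{s}[i]$ equals $\gamma\hat{\bmm}_{t}[i]/(\epsilon+\sqrt{\hat{\bv}_{t}[i]})$ exactly, and the identity follows.

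There is no real obstacle. The only point needing care is the degenerate case $\epsilon=0$ with some $\hat{\bv}_{t}[i]=0$, where $\bet_{t}[i]$ is infinite. In that case all $\bg_{s}[i]=0$ for $s\le t$, so the linear term vanishes in that coordinate. We adopt the convention that both sides equal $0$ there, which matches the usual convention $0/0=0$ in $\Adam$. Apart from this, we only need to note that $\beta_{1}=0$ is handled either by reading $\beta_{1}^{t}\beta_{1}^{-s}$ as $\beta_{1}^{t-s}$ with $0^{0}=1$, or as a limit.
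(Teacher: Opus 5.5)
Your argument is correct and is essentially the paper's proof: solve the separable $\FTRL$ problem in closed form, $\bdelta_{t+1}[i]=-\bet_t[i]\sum_{s\le t}\beta_1^{-s}\bg_s[i]$, substitute (\ref{eq:main-vector-eta-def}) to recover $-\gamma\hat{\bmm}_t/(\epsilon+\sqrt{\hat{\bv}_t})$, and match the two recursions from the same $\bx_1$ under the same draws of $Z_t,\alpha_t$. Your induction and explicit unrolling of $\bmm_t,\bv_t$ only spell out what the paper leaves implicit.

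One small slip is in your aside on degenerate cases. $\hat{\bv}_t[i]=0$ forces $\bg_s[i]=0$ for all $s\le t$ only when $\beta_2>0$; if $\beta_2=0$ it forces only $\bg_t[i]=0$, and then (with $\epsilon=0$) both $\Adam$ and $\FTRL$ can be genuinely ill-defined rather than both equal to $0$ --- a case the paper's statement also silently excludes.
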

\begin{proof}
Fix $t\in\left[T\right]$. By the update rule of $\FTRL$ and $\ell_{t}(\bdelta)=\left\langle \beta_{1}^{-t}\bg_{t},\bdelta\right\rangle $
in $\EOTNC$, we have
\[
\bdelta_{t+1}=\argmin_{\bdelta\in\R^{d}}\frac{1}{2}\left\Vert \bdelta\right\Vert _{\bet_{t}^{-1}}^{2}+\sum_{s=1}^{t}\ell_{s}(\bdelta)=\argmin_{\bdelta\in\R^{d}}\frac{1}{2}\left\Vert \bdelta\right\Vert _{\bet_{t}^{-1}}^{2}+\left\langle \sum_{s=1}^{t}\beta_{1}^{-s}\bg_{s},\bdelta\right\rangle ,
\]
which implies that
\[
\bdelta_{t+1}[i]=-\bet_{t}[i]\sum_{s=1}^{t}\beta_{1}^{-s}\bg_{s}[i],\forall i\in\left[d\right].
\]
In particular, when $\bet_{t}[i]=\frac{\gamma\beta_{1}^{t}(1-\beta_{1})/(1-\beta_{1}^{t})}{\epsilon+\sqrt{(\sum_{s=1}^{t}\beta_{2}^{t-s}\bg_{s}^{2}[i])(1-\beta_{2})/(1-\beta_{2}^{t})}}$
as given in (\ref{eq:main-vector-eta-def}), we obtain that
\begin{equation}
\bdelta_{t+1}[i]=-\frac{\gamma(\sum_{s=1}^{t}\beta_{1}^{t-s}\bg_{s}[i])(1-\beta_{1})/(1-\beta_{1}^{t})}{\epsilon+\sqrt{(\sum_{s=1}^{t}\beta_{2}^{t-s}\bg_{s}^{2}[i])(1-\beta_{2})/(1-\beta_{2}^{t})}},\forall i\in\left[d\right].\label{eq:equivalence-delta}
\end{equation}
Consequently, the iterates of $\EOTNC$ satisfy that, for any $t\in\left[T\right]$,
\[
\bx_{t+1}[i]=\bx_{t}[i]+\alpha_{t}\bdelta_{t+1}[i]=\bx_{t}[i]-\frac{\gamma\alpha_{t}(\sum_{s=1}^{t}\beta_{1}^{t-s}\bg_{s}[i])(1-\beta_{1})/(1-\beta_{1}^{t})}{\epsilon+\sqrt{(\sum_{s=1}^{t}\beta_{2}^{t-s}\bg_{s}^{2}[i])(1-\beta_{2})/(1-\beta_{2}^{t})}},\forall i\in\left[d\right].
\]
In particular, starting from the same initial point $\bx_{1}\in\R^{d}$,
the above recursion coincides with the trajectory of $\Adam$ with
the learning rate $\left\{ \gamma_{t}=\gamma\alpha_{t}\right\} _{t=1}^{T}$.
\end{proof}

\section{Theoretical Analysis\label{sec:analysis}}

In this section, we provide the theoretical analysis and prove Theorem
\ref{thm:Adam-rate} at the end. Compared with the existing literature,
our proofs require a more careful and technical analysis, since we
neither restrict the domain of $\FTRL$ to a bounded set, which is
equivalent to clipping $\bdelta_{t}$, as in \citet{pmlr-v235-ahn24b}
and \citet{NEURIPS2024_ac8ec9b4}, nor add additional regularization
to the loss function $\ell_{t}$, which is equivalent to modifying
the stepsize of $\Adam$, as in \citet{xie2026dynamic}, and, moreover,
we include the important bias-correction term that was, however, missed
in all prior works studying $\Adam$ from an online learning perspective
\citep{pmlr-v235-ahn24b,NEURIPS2024_ac8ec9b4,pmlr-v313-nguyen26b,xie2026dynamic}.
In addition, we highlight that our results are established under heavy-tailed
noise, which not only generalizes the widely considered finite-variance
condition but also more closely reflects practical settings.

\subsection{A Useful Auxiliary Sequence}

We begin by introducing an auxiliary sequence $\left\{ \eta_{t}\right\} _{t=1}^{T}$
defined as follows:
\begin{equation}
\eta_{t}\defeq\frac{\gamma(1-\beta_{1})/(1-\beta_{1}^{t})}{\sqrt{(1-\beta_{2})/(1-\beta_{2}^{t})}},\forall t\in\left[T\right].\label{eq:main-eta-def}
\end{equation}
As will become clear later, $\left\{ \eta_{t}\right\} _{t=1}^{T}$
plays an important role in our analysis. In particular, Lemma \ref{lem:eta-prop}
below establishes two key properties of this sequence, which will
be used in the subsequent proof.
\begin{lem}
\label{lem:eta-prop}Let $\left\{ \eta_{t}\right\} _{t=1}^{T}$ be
defined in (\ref{eq:main-eta-def}), then the following two properties
hold:
\begin{itemize}
\item $\eta_{t}\geq\rho\eta_{t+1},\forall t\in\left[T-1\right]$.
\item $\left\{ \eta_{t}\sqrt{1-\rho^{2t}}\right\} _{t=1}^{T}$ is a nondecreasing
sequence.
\end{itemize}
\end{lem}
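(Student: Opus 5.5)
The plan is to rewrite $\eta_t$ in closed form and reduce both claims to elementary one-variable inequalities. Since $(1-\beta_1)/(1-\beta_1^t)$ and $(1-\beta_2)/(1-\beta_2^t)$ appear explicitly, (\ref{eq:main-eta-def}) gives
\[
\eta_t=\gamma\,\frac{1-\beta_1}{\sqrt{1-\beta_2}}\cdot\frac{\sqrt{1-\beta_2^t}}{1-\beta_1^t}.
\]
Throughout I work in the regime (\ref{eq:main-rho-range}), so $\beta_1^2<\beta_2$ and $\beta_2>0$. The case $\beta_1=0$ is trivial: then $\rho=0$, the first claim is $\eta_t\geq 0$, and the second claim holds because $\sqrt{1-\beta_2^t}$ is nondecreasing.

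\textbf{First property.} After squaring, $\eta_t\geq\rho\eta_{t+1}$ is equivalent to $f(\beta_1)^2\leq f(\beta_2)$, where
\[
f(x)\defeq\frac{x(1-x^t)}{1-x^{t+1}}=\frac{S(x)}{1+S(x)},\qquad S(x)\defeq\sum_{k=1}^t x^k .
\]
Since $S$ is increasing on $[0,1)$, so is $f$. Together with $\beta_1^2<\beta_2$ this gives $f(\beta_2)\geq f(\beta_1^2)$. It therefore suffices to show $f(x)^2\leq f(x^2)$. A direct computation gives
\[
\frac{f(x^2)}{f(x)^2}=\frac{(1+x^t)(1-x^{t+1})}{(1+x^{t+1})(1-x^t)} .
\]
This ratio is at least $1$, because the difference between its numerator and its denominator is $2x^t(1-x)\geq0$.

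\textbf{Second property.} Set $a\defeq\sqrt{\beta_2}$ and $c\defeq\rho$, so that $\beta_1=ac$ and $\rho^2=c^2$. Up to a positive constant factor,
\[
\eta_t^2(1-\rho^{2t})\propto h(t)\defeq\frac{(1-a^{2t})(1-c^{2t})}{(1-(ac)^t)^2}.
\]
I will show that $h$ is nondecreasing as a function of real $t>0$. Put $u=a^t$ and $v=c^t$, and differentiate $\log h=\log(1-u^2)+\log(1-v^2)-2\log(1-uv)$ along the curve $u=a^t$, $v=c^t$. Using $\partial_u\log h=2(v-u)/((1-u^2)(1-uv))$, the symmetric formula for $\partial_v$, and $\ln a=(\ln u)/t$, $\ln c=(\ln v)/t$, I expect
\[
\frac{\d}{\d t}\log h=\frac{2(v-u)}{t(1-uv)}\bigl[\varphi(u)-\varphi(v)\bigr],\qquad\varphi(x)\defeq\frac{x\ln x}{1-x^2}.
\]
The sign then follows once $\varphi$ is shown to be nonincreasing on $(0,1)$. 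Substituting $x=e^{-s}$ gives $\varphi=-s/(2\sinh s)$. The function $s\mapsto s/\sinh s$ is decreasing on $(0,\infty)$, and $s$ decreases as $x$ increases, so $\varphi$ is decreasing in $x$. Consequently $(v-u)(\varphi(u)-\varphi(v))\geq0$, hence $h'\geq0$, and restricting to integer $t$ gives the claim. The boundary cases $c=0$ or $u=v$ make the derivative zero or the claim trivial.

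\textbf{Main obstacle.} The hard step is the second property. A naive Cauchy--Schwarz or AM--GM comparison of the partial sums $\sum b_2^k$, $\sum\rho^{2k}$ and $\sum\beta_1^k$ points in the wrong direction. The argument only closes after passing to continuous $t$ and reducing the sign of the derivative to the monotonicity of the single function $\varphi$. The care needed is in carrying out the derivative computation correctly, with the two factors $\ln a$ and $\ln c$ each absorbed into $\varphi$.
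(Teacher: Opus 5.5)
Your proposal is correct and follows essentially the paper's route. For the first property you reduce by monotonicity to the boundary case $\beta_1^2=\beta_2$ (the paper monotonizes in $\rho$ with $\beta_2$ fixed, you in $\beta_2$ with $\beta_1$ fixed) and then check the same inequality $(1+x^t)(1-x^{t+1})\geq(1+x^{t+1})(1-x^t)$; for the second you differentiate in continuous $t$ and reduce to the monotonicity of $\varphi(x)=\frac{x\ln x}{1-x^2}$, just as the paper does after rewriting the expression as $1-\bigl(\frac{u-v}{1-uv}\bigr)^2$ and using $\frac{x^2-1}{x\ln x}=-1/\varphi(x)$, whose monotonicity comes down to the same fact $\tanh s\leq s$.
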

\begin{proof}
See Appendix \ref{sec:eta}.
\end{proof}

\subsection{Two Intermediate Lemmas}

By Fact \ref{fact:equivalence}, proving convergence of $\Adam$ to
a $(\lambda,\varepsilon)$-$L_{1}$-stationary point reduces to analyzing
$\EOTNC$ with $\FTRL$ using the stepsize sequence $\left\{ \bet_{t}\right\} _{t=1}^{T}$
given in (\ref{eq:main-vector-eta-def}). Specifically, it suffices
to invoke Lemmas \ref{lem:EO2NC-gradient} and \ref{lem:EO2NC-stability}
for $\EOTNC$ and combine them with the regret bound for $\FTRL$
in Lemma \ref{lem:FTRL-regret}.

First, we recall that Lemma \ref{lem:FTRL-regret} relies on a technical
condition, i.e., the stepsize sequence $\left\{ \bet_{t}[i]\right\} _{t=1}^{T}$
is nonincreasing for every coordinate $i\in\left[d\right]$. In the
following Lemma \ref{lem:vector-eta-prop}, we verify that the stepsize
$\left\{ \bet_{t}\right\} _{t=1}^{T}$ specified in (\ref{eq:main-vector-eta-def})
indeed satisfies this requirement.
\begin{lem}
\label{lem:vector-eta-prop}Let $\left\{ \bet_{t}\right\} _{t=1}^{T}$
be defined in (\ref{eq:main-vector-eta-def}), then $\left\{ \bet_{t}[i]\right\} _{t=1}^{T}$
is nonincreasing for each $i\in\left[d\right]$.
\end{lem}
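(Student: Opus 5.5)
The plan is to isolate the deterministic scalar sequence $\{\eta_t\}_{t=1}^{T}$ from (\ref{eq:main-eta-def}) inside $\bet_t[i]$, and then compare $\bet_{t+1}[i]$ with $\bet_t[i]$ through a ratio. Fix $i\in[d]$ and write $V_t\defeq\sum_{s=1}^{t}\beta_2^{t-s}\bg_s^2[i]$. Comparing (\ref{eq:main-vector-eta-def}) with (\ref{eq:main-eta-def}), the numerator of $\bet_t[i]$ equals $\beta_1^t\eta_t\sqrt{(1-\beta_2)/(1-\beta_2^t)}$. Dividing the numerator and the denominator by $\sqrt{(1-\beta_2)/(1-\beta_2^t)}$ gives
\[
\bet_t[i]=\frac{\beta_1^t\eta_t}{D_t},\qquad D_t\defeq\epsilon\sqrt{\frac{1-\beta_2^t}{1-\beta_2}}+\sqrt{V_t}.
\]
So it suffices to show two things for every $t\in[T-1]$: $\beta_1^{t+1}\eta_{t+1}\leq\sqrt{\beta_2}\,\beta_1^t\eta_t$ for the numerators, and $D_{t+1}\geq\sqrt{\beta_2}\,D_t$ for the denominators. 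Dividing the first by the second then gives $\bet_{t+1}[i]\leq\bet_t[i]$.

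\emph{Numerator.} The required inequality is $\beta_1\eta_{t+1}\leq\sqrt{\beta_2}\,\eta_t$, which is exactly $\rho\eta_{t+1}\leq\eta_t$. This is the first property in Lemma \ref{lem:eta-prop}, which I will invoke directly.

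\emph{Denominator.} I treat the two summands of $D_{t+1}$ separately.
\begin{itemize}
\item The EMA recursion gives $V_{t+1}=\beta_2V_t+\bg_{t+1}^2[i]\geq\beta_2V_t$, hence $\sqrt{V_{t+1}}\geq\sqrt{\beta_2}\sqrt{V_t}$.
\item Since $1-\beta_2^{t+1}\geq\beta_2-\beta_2^{t+1}=\beta_2(1-\beta_2^t)$, we get $\sqrt{(1-\beta_2^{t+1})/(1-\beta_2)}\geq\sqrt{\beta_2}\sqrt{(1-\beta_2^{t})/(1-\beta_2)}$. Multiplying by $\epsilon\geq0$ preserves this.
\end{itemize}
Adding the two bounds yields $D_{t+1}\geq\sqrt{\beta_2}\,D_t$.

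\emph{Degenerate cases.} These are the only place needing care, since the core argument is elementary once the factorization through $\eta_t$ is found.
\begin{itemize}
\item If $\beta_1=0$, then $\bet_t[i]=0$ for all $t$ and monotonicity is trivial.
\item If $\beta_1>0$, then $\rho$ in (\ref{eq:main-rho-def}) being well defined forces $\beta_2>0$, so dividing by $\sqrt{\beta_2}$ is legitimate.
\item If $\epsilon=0$ and the first stochastic gradients vanish in coordinate $i$, then $D_t=0$ and $\bet_t[i]$ is formally $+\infty$. I would adopt the convention that $\bet_t[i]=+\infty$ in this case; the FTRL step is then read with the corresponding regularizer coordinate set to zero, i.e.\ the update formula $\bdelta_{t+1}[i]=-\bet_t[i]\sum_{s\leq t}\beta_1^{-s}\bg_s[i]$ read as $0$ by the same convention. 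Under this convention $\bet_{t+1}[i]\leq\bet_t[i]$ holds trivially whenever $D_t=0$. When $D_t>0$, we have $D_{t+1}\geq\sqrt{\beta_2}D_t>0$, so the ratio argument above applies unchanged.
\end{itemize}
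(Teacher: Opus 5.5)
Your proof is correct and is essentially the paper's argument. The paper divides by $\sqrt{\beta_2^t(1-\beta_2)/(1-\beta_2^t)}$ to write $\bet_t[i]=\rho^t\eta_t/(D_t/\beta_2^{t/2})$, observes that this denominator is nondecreasing, and invokes $\rho\eta_{t+1}\le\eta_t$ from Lemma \ref{lem:eta-prop}; this is exactly your pair of inequalities with the $\sqrt{\beta_2}$ factors absorbed, and your explicit treatment of the degenerate cases ($\beta_1=0$, $\beta_2=0$, and $\epsilon=0$ with vanishing gradients) is extra care that the paper leaves implicit.
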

\begin{proof}
We fix a coordinate $i\in\left[d\right]$ in the following proof.
For any $t\in\left[T-1\right]$, note that
\[
\bet_{t}[i]\overset{(\ref{eq:main-vector-eta-def})}{=}\frac{\gamma\beta_{1}^{t}(1-\beta_{1})/(1-\beta_{1}^{t})}{\epsilon+\sqrt{(\sum_{s=1}^{t}\beta_{2}^{t-s}\bg_{s}^{2}[i])(1-\beta_{2})/(1-\beta_{2}^{t})}}\overset{\rho=\beta_{1}/\sqrt{\beta_{2}}}{=}\frac{\rho^{t}\eta_{t}}{\frac{\epsilon}{\sqrt{\beta_{2}^{t}(1-\beta_{2})/(1-\beta_{2}^{t})}}+\sqrt{\sum_{s=1}^{t}\beta_{2}^{-s}\bg_{s}^{2}[i]}}.
\]
Since both $\frac{\epsilon}{\sqrt{\beta_{2}^{t}(1-\beta_{2})/(1-\beta_{2}^{t})}}$
and $\sqrt{\sum_{s=1}^{t}\beta_{2}^{-s}\bg_{s}^{2}[i]}$ are nondecreasing
in $t$, it suffices to show $\rho^{t}\eta_{t}\geq\rho^{t+1}\eta_{t+1}$,
or equivalently, $\eta_{t}\geq\rho\eta_{t+1}$. The latter follows
from the first result in Lemma \ref{lem:eta-prop}.
\end{proof}

Next, we observe that Lemma \ref{lem:EO2NC-stability} depends on
the value of $\E_{\random}\left[\left\Vert \bdelta_{t+1}\right\Vert ^{2}\right]$,
where $\left\Vert \cdot\right\Vert $ can be any norm, and Lemma \ref{lem:FTRL-regret}
needs us to bound $\left\Vert \bdelta_{t}-\bdelta_{t+1}\right\Vert _{\infty}$.
To handle these terms, we prove a stronger result that $\left\Vert \bdelta_{t+1}\right\Vert _{\infty}$
is bounded above by a nondecreasing sequence when $\rho\in\left[0,1\right)$,
and this sequence is itself uniformly upper bounded. Surprisingly,
to the best of our knowledge, this important feature has not been
discussed in the literature.
\begin{lem}
\label{lem:iterate-bound}With the loss $\left\{ \ell_{t}(\bdelta)=\left\langle \beta_{1}^{-t}\bg_{t},\bdelta\right\rangle \right\} _{t=1}^{T}$
and the stepsize $\left\{ \bet_{t}\right\} _{t=1}^{T}$ defined in
(\ref{eq:main-vector-eta-def}), $\FTRL$ (Algorithm \ref{alg:FTRL})
guarantees that $\left\Vert \bdelta_{t+1}\right\Vert _{\infty}\leq D_{t+1}\leq D_{\infty},\forall t\in\left[T\right]$,
where $\left\{ D_{t+1}\defeq\eta_{t}\sqrt{\frac{1-\rho^{2t}}{1-\rho^{2}}}\right\} _{t=1}^{T}$
is a nondecreasing sequence and $D_{\infty}\defeq\lim_{t\to\infty}D_{t+1}=\frac{\gamma(1-\beta_{1})}{\sqrt{(1-\beta_{2})(1-\rho^{2})}}.$
\end{lem}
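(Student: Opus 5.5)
We start from the closed form of $\FTRL$ obtained in the proof of Fact \ref{fact:equivalence}, namely (\ref{eq:equivalence-delta}). Fix a coordinate $i\in\left[d\right]$ and $t\in\left[T\right]$. Dropping $\epsilon\geq0$ from the denominator can only increase the magnitude, so
\[
\left|\bdelta_{t+1}[i]\right|\leq\frac{\gamma\frac{1-\beta_{1}}{1-\beta_{1}^{t}}\left|\sum_{s=1}^{t}\beta_{1}^{t-s}\bg_{s}[i]\right|}{\sqrt{\frac{1-\beta_{2}}{1-\beta_{2}^{t}}\sum_{s=1}^{t}\beta_{2}^{t-s}\bg_{s}^{2}[i]}}=\eta_{t}\frac{\left|\sum_{s=1}^{t}\beta_{1}^{t-s}\bg_{s}[i]\right|}{\sqrt{\sum_{s=1}^{t}\beta_{2}^{t-s}\bg_{s}^{2}[i]}},
\]
using the definition (\ref{eq:main-eta-def}) of $\eta_{t}$. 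If all $\bg_{s}[i]$ vanish, then $\bdelta_{t+1}[i]=0$ and there is nothing to prove, so the bound is trivial in that case.

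In the nontrivial case we apply Cauchy--Schwarz after writing $\beta_{1}^{t-s}=\rho^{t-s}\cdot\beta_{2}^{(t-s)/2}$, which is exactly the definition $\rho=\beta_{1}/\sqrt{\beta_{2}}$:
\[
\left|\sum_{s=1}^{t}\rho^{t-s}\beta_{2}^{\frac{t-s}{2}}\bg_{s}[i]\right|\leq\sqrt{\sum_{s=1}^{t}\rho^{2(t-s)}}\sqrt{\sum_{s=1}^{t}\beta_{2}^{t-s}\bg_{s}^{2}[i]}=\sqrt{\frac{1-\rho^{2t}}{1-\rho^{2}}}\sqrt{\sum_{s=1}^{t}\beta_{2}^{t-s}\bg_{s}^{2}[i]}.
\]
This cancels the denominator and gives $\left|\bdelta_{t+1}[i]\right|\leq\eta_{t}\sqrt{\frac{1-\rho^{2t}}{1-\rho^{2}}}=D_{t+1}$; taking the maximum over $i$ yields $\left\Vert \bdelta_{t+1}\right\Vert _{\infty}\leq D_{t+1}$. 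When $\beta_{2}=0$ (so $\rho$ is interpreted via $\beta_{1}=0$ in the regime (\ref{eq:main-rho-range})), the same computation degenerates harmlessly to a single term, and I would note this edge case separately.

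The monotonicity of $\{D_{t+1}\}$ is immediate from the second property in Lemma \ref{lem:eta-prop}, since $D_{t+1}=\eta_{t}\sqrt{1-\rho^{2t}}/\sqrt{1-\rho^{2}}$ with a constant factor. For the limit, note $\eta_{t}\to\gamma(1-\beta_{1})/\sqrt{1-\beta_{2}}$ as $t\to\infty$ because $\beta_{1}^{t},\beta_{2}^{t}\to0$, and $\rho^{2t}\to0$ since $\rho<1$. This gives $D_{\infty}=\gamma(1-\beta_{1})/\sqrt{(1-\beta_{2})(1-\rho^{2})}$, and a nondecreasing sequence is bounded by its limit, so $D_{t+1}\leq D_{\infty}$.

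The only real work is the Cauchy--Schwarz splitting step, and even that is short. The monotonicity is fully delegated to Lemma \ref{lem:eta-prop}, which is stated earlier and may be assumed here.
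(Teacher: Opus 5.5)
Your proposal is correct and follows the paper's proof essentially step for step: you start from the closed form (\ref{eq:equivalence-delta}), drop $\epsilon\geq0$, rewrite the prefactor as $\eta_{t}$, apply Cauchy--Schwarz via $\beta_{1}^{t-s}=\rho^{t-s}\beta_{2}^{(t-s)/2}$, take monotonicity from the second part of Lemma~\ref{lem:eta-prop}, and obtain $D_{\infty}$ as the limit. Your remarks on the degenerate cases (all $\bg_{s}[i]=0$, and $\beta_{2}=0$) are harmless additions that the paper leaves implicit.
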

\begin{proof}
Given $t\in\left[T\right]$, recall that
\[
\bdelta_{t+1}[i]\overset{(\ref{eq:equivalence-delta})}{=}-\frac{\gamma(\sum_{s=1}^{t}\beta_{1}^{t-s}\bg_{s}[i])(1-\beta_{1})/(1-\beta_{1}^{t})}{\epsilon+\sqrt{(\sum_{s=1}^{t}\beta_{2}^{t-s}\bg_{s}^{2}[i])(1-\beta_{2})/(1-\beta_{2}^{t})}},\forall i\in\left[d\right],
\]
which implies that, for any $i\in\left[d\right]$,
\begin{align*}
\left|\bdelta_{t+1}[i]\right| & \leq\frac{\gamma\left|\sum_{s=1}^{t}\beta_{1}^{t-s}\bg_{s}[i]\right|(1-\beta_{1})/(1-\beta_{1}^{t})}{\epsilon+\sqrt{(\sum_{s=1}^{t}\beta_{2}^{t-s}\bg_{s}^{2}[i])(1-\beta_{2})/(1-\beta_{2}^{t})}}\overset{\epsilon\geq0}{\leq}\frac{\gamma\left|\sum_{s=1}^{t}\beta_{1}^{t-s}\bg_{s}[i]\right|(1-\beta_{1})/(1-\beta_{1}^{t})}{\sqrt{(\sum_{s=1}^{t}\beta_{2}^{t-s}\bg_{s}^{2}[i])(1-\beta_{2})/(1-\beta_{2}^{t})}}\\
 & \overset{(\ref{eq:main-eta-def})}{=}\eta_{t}\frac{\left|\sum_{s=1}^{t}\beta_{1}^{t-s}\bg_{s}[i]\right|}{\sqrt{\sum_{s=1}^{t}\beta_{2}^{t-s}\bg_{s}^{2}[i]}}\leq\eta_{t}\sqrt{\sum_{s=1}^{t}\rho^{2(t-s)}}=\eta_{t}\sqrt{\frac{1-\rho^{2t}}{1-\rho^{2}}},
\end{align*}
where the second-to-last step is due to Cauchy-Schwarz inequality.
Therefore,
\[
\left\Vert \bdelta_{t+1}\right\Vert _{\infty}\leq D_{t+1},\forall t\in\left[T\right].
\]
The monotonicity of $\left\{ D_{t+1}\right\} _{t=1}^{T}$ holds due
to the second result in Lemma \ref{lem:eta-prop}. Finally, $D_{\infty}$
is obtained by taking the limit of $D_{t+1}$. 
\end{proof}

\subsection{Two Core Results}

Equipped with Lemmas \ref{lem:vector-eta-prop} and \ref{lem:iterate-bound},
we are ready to prove two core results that will be used to complete
the convergence analysis.

First, we establish the following regret bound for $\FTRL$, whose
proof relies on combining Lemmas \ref{lem:FTRL-regret}, \ref{lem:vector-eta-prop},
and \ref{lem:iterate-bound}.
\begin{lem}
\label{lem:regret}Under Assumptions \ref{assu:lip} and \ref{assu:heavy},
with the loss $\left\{ \ell_{t}(\bdelta)=\left\langle \beta_{1}^{-t}\bg_{t},\bdelta\right\rangle \right\} _{t=1}^{T}$
and the stepsize $\left\{ \bet_{t}\right\} _{t=1}^{T}$ defined  in
(\ref{eq:main-vector-eta-def}), $\FTRL$ (Algorithm \ref{alg:FTRL})
guarantees that, for any $t\in\left[T\right]$,
\[
\E_{\random}\left[\beta_{1}^{t}\reg_{t}(\bu_{t})\right]\leq\frac{U^{2}d\epsilon(1-\beta_{1}^{t})}{2\gamma(1-\beta_{1})}+\left(\frac{U^{2}\sqrt{1-\beta_{2}}(1-\beta_{1}^{t})}{\sqrt{2}\gamma(1-\beta_{1})}+\frac{8\gamma(1-\beta_{1})}{\rho\sqrt{(1-\rho^{2})(1-\beta_{2})}}\right)\left(\frac{\left\Vert \bG\right\Vert _{1}}{\sqrt{1-\beta_{2}}}+\frac{2\left\Vert \bsig\right\Vert _{1}}{(1-\beta_{2})^{\frac{1}{\p}}}\right),
\]
where $\left\{ \bu_{t}\right\} _{t=1}^{T}$ is defined in (\ref{eq:main-u-def}).
\end{lem}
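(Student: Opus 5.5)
Apply Lemma \ref{lem:FTRL-regret}, which is valid because Lemma \ref{lem:vector-eta-prop} makes each $\bet_{t}[i]$ nonincreasing. With $\bu=\bu_{t}$ and $\nabla_{i}\ell_{s}(\bdelta_{s})=\beta_{1}^{-s}\bg_{s}[i]$, it gives
\[
\beta_{1}^{t}\reg_{t}(\bu_{t})\leq\frac{\beta_{1}^{t}}{2}\left\Vert \bu_{t}\right\Vert _{\bet_{t}^{-1}}^{2}+\beta_{1}^{t}\sum_{i=1}^{d}\sum_{s=1}^{t}\left\Vert \bdelta_{s}-\bdelta_{s+1}\right\Vert _{\infty}\beta_{1}^{-s}\left|\bg_{s}[i]\right|\land\frac{\bet_{s-1}[i]\beta_{1}^{-2s}\bg_{s}^{2}[i]}{2}.
\]
I will bound the two terms separately.

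\textbf{Regularizer term.} Since $\bu_{t}^{2}[i]=U^{2}$, we have $\frac{\beta_{1}^{t}}{2}U^{2}\sum_{i}\bet_{t}^{-1}[i]$. By (\ref{eq:main-vector-eta-def}),
\[
\beta_{1}^{t}/\bet_{t}[i]=\frac{1-\beta_{1}^{t}}{\gamma(1-\beta_{1})}\Big(\epsilon+\sqrt{\tfrac{1-\beta_{2}}{1-\beta_{2}^{t}}\textstyle\sum_{s\leq t}\beta_{2}^{t-s}\bg_{s}^{2}[i]}\Big).
\]
The $\epsilon$ part gives exactly $\frac{U^{2}d\epsilon(1-\beta_{1}^{t})}{2\gamma(1-\beta_{1})}$. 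For the square-root part, take expectations, use Jensen or subadditivity, and write $\bg_{s}=\nabla F(\bx_{s})+\bxi_{s}$. The gradient part is at most $\bG[i]\sqrt{1-\beta_{2}}\cdot\sqrt{\frac{1-\beta_{2}^{t}}{1-\beta_{2}}}\cdot\sqrt{\frac{1}{1-\beta_{2}^{t}}}$, i.e.\ $\lesssim \bG[i]$. Writing the target as $\sqrt{1-\beta_{2}}\cdot\frac{\bG[i]}{\sqrt{1-\beta_{2}}}$ matches the form of the statement. The noise part uses $\sqrt{\sum a_{s}^{2}}\leq(\sum|a_{s}|^{\p})^{1/\p}$ for $\p\leq2$ and then Jensen:
\[
\E\sqrt{\textstyle\sum_{s}\beta_{2}^{t-s}\bxi_{s}^{2}[i]}\leq\Big(\textstyle\sum_{s}\beta_{2}^{\p(t-s)/2}\bsig^{\p}[i]\Big)^{1/\p}\lesssim\bsig[i](1-\beta_{2})^{-1/\p}.
\]
Multiplying by $\sqrt{1-\beta_{2}}$ gives the $\frac{\sqrt{1-\beta_{2}}}{\sqrt2}\cdot\frac{2\left\Vert \bsig\right\Vert _{1}}{(1-\beta_{2})^{1/\p}}$ structure. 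The normalization $1/(1-\beta_{2}^{t})$ has to be handled carefully; bounding $\sqrt{1-\beta_{2}}\sqrt{(1-\beta_{2}^{t})^{-1}}\cdot(\ldots)$ uniformly may need $\sqrt{1-\beta_{2}^{t}}\geq$ something, or the bias-correction factor cancels, and I expect the constant $\sqrt2$ to arise there.

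\textbf{Stability sum.} This is the main obstacle. Lemma \ref{lem:iterate-bound} gives $\left\Vert \bdelta_{s}-\bdelta_{s+1}\right\Vert _{\infty}\leq D_{s}+D_{s+1}\leq2D_{s+1}\leq 2D_{\infty}$. I will use the minimum in the regret bound as $a\land b\leq\sqrt{ab}$, or choose whichever branch helps per term. The goal is to control
\[
\beta_{1}^{t}\sum_{s}\beta_{1}^{-s}|\bg_{s}[i]|\cdot\min\{2D_{\infty},\tfrac{1}{2}\bet_{s-1}[i]\beta_{1}^{-s}|\bg_{s}[i]|\}.
\]
Plugging in $\bet_{s-1}[i]$ from (\ref{eq:main-vector-eta-def}), with $\epsilon$ dropped and the convention $\bet_{0}=\bet_{1}$, gives $\bet_{s-1}[i]\beta_{1}^{-s}=\rho^{-1}\cdot\frac{\eta_{s-1}\beta_{2}^{(s-1)/2}\beta_{1}^{-1}\ldots}{\sqrt{\sum_{r<s}\beta_{2}^{-r}\bg_{r}^{2}[i]}}$. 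This has the usual AdaGrad form $a_{s}^{2}/\sqrt{\sum_{r<s}a_{r}^{2}}$ with $a_{s}=\beta_{2}^{-s/2}|\bg_{s}[i]|$, which explains the $1/\rho$ factor. The min with $2D_{\infty}a_{s}$ replaces the missing current term, giving the standard bound
\[
\sum_{s}a_{s}\min\{c,\,a_{s}/\sqrt{\textstyle\sum_{r<s}a_{r}^{2}}\}\lesssim(1+c)\sqrt{\textstyle\sum_{s\leq t}a_{s}^{2}}.
\]
After multiplying back the prefactor $\beta_{1}^{t}\beta_{1}^{-s}\beta_{2}^{s/2}=\rho^{-s}\beta_{1}^{t}$, the result should be $\lesssim\frac{D_{\infty}}{\rho}\sqrt{\sum_{s}\beta_{2}^{t-s}\bg_{s}^{2}[i]}$, up to the mismatch of weights, which I will reconcile using $\rho<1$ monotonicity. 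Here $D_{\infty}=\frac{\gamma(1-\beta_{1})}{\sqrt{(1-\beta_{2})(1-\rho^{2})}}$.

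\textbf{Expectation.} Finally, take expectations exactly as in the regularizer step, bounding $\E\sqrt{\sum_{s}\beta_{2}^{t-s}\bg_{s}^{2}[i]}\leq\frac{\bG[i]}{\sqrt{1-\beta_{2}}}+\frac{2\bsig[i]}{(1-\beta_{2})^{1/\p}}$. This yields the term $\frac{8\gamma(1-\beta_{1})}{\rho\sqrt{(1-\rho^{2})(1-\beta_{2})}}(\cdots)$, and summing over $i$ completes the bound.
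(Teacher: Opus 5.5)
Your route matches the paper's: Lemma~\ref{lem:FTRL-regret} (licensed by Lemma~\ref{lem:vector-eta-prop}), splitting $\frac{\beta_{1}^{t}}{2}\Vert\bu_{t}\Vert_{\bet_{t}^{-1}}^{2}$ into an $\epsilon$-part and a square-root part, $\Vert\bdelta_{s}-\bdelta_{s+1}\Vert_{\infty}\le 2D_{\infty}$ from Lemma~\ref{lem:iterate-bound}, an AdaGrad-type summation in $a_{s}=\beta_{2}^{-s/2}\vert\bg_{s}[i]\vert$, $\rho^{t-s}\le 1$ to reconcile weights, and the $\ell_{2}\le\ell_{\p}$ plus Jensen moment bound. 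The one step that fails as written is the noise part of the regularizer term. You bound $\E\sqrt{\sum_{s\le t}\beta_{2}^{t-s}\bxi_{s}^{2}[i]}\lesssim\bsig[i](1-\beta_{2})^{-1/\p}$, discarding its $t$-dependence. You then multiply by the bias-correction factor $\sqrt{(1-\beta_{2})/(1-\beta_{2}^{t})}$, which equals $1$ at $t=1$. The result exceeds the stated term $\frac{U^{2}\sqrt{1-\beta_{2}}(1-\beta_{1}^{t})}{\sqrt{2}\gamma(1-\beta_{1})}\cdot\frac{2\Vert\bsig\Vert_{1}}{(1-\beta_{2})^{1/\p}}$ by a factor of order $(1-\beta_{2})^{-1/2}$. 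No lower bound on $1-\beta_{2}^{t}$ can rescue this. The fix, which is what the paper's proof does, is to keep the exact sum $\sum_{s\le t}\beta_{2}^{\p(t-s)/2}=\frac{1-\beta_{2}^{\p t/2}}{1-\beta_{2}^{\p/2}}$ and use $\p\le 2$ twice:
\[
\bigl(1-\beta_{2}^{\p t/2}\bigr)^{1/\p}\le\bigl(1-\beta_{2}^{t}\bigr)^{1/\p}\le\sqrt{1-\beta_{2}^{t}} .
\]
Combine this with $1-\beta_{2}^{\p/2}\ge\frac{\p}{2}(1-\beta_{2})$ and $(\p/2)^{1/\p}\ge\frac{1}{2}$. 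You get $\E\sqrt{\sum_{s\le t}\beta_{2}^{t-s}\bxi_{s}^{2}[i]}\le\frac{2\sqrt{1-\beta_{2}^{t}}}{(1-\beta_{2})^{1/\p}}\bsig[i]$. The factor $\sqrt{1-\beta_{2}^{t}}$ then cancels the bias correction exactly, as it already does in your gradient part. The $\sqrt{2}$ in the statement comes from splitting $\bg_{s}=\nabla F(\bx_{s})+\bxi_{s}$ via $(x+y)^{2}\le 2x^{2}+2y^{2}$, not from the bias correction; Minkowski's inequality for the weighted $\ell_{2}$ norm would even remove it.

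Two smaller repairs. First, your display for $\bet_{s-1}[i]\beta_{1}^{-s}$ is garbled. Dropping $\epsilon$ gives $\bet_{s-1}[i]\le\rho^{s-1}\eta_{s-1}/S_{s-1}$ with $S_{s-1}^{2}=\sum_{r<s}a_{r}^{2}$. So the $s$-th summand is at most $\rho^{-s}\bigl(2D_{\infty}a_{s}\land\frac{\eta_{s-1}a_{s}^{2}}{2\rho S_{s-1}}\bigr)$, and you need $\eta_{s-1}\le D_{s}\le D_{\infty}$ (Lemma~\ref{lem:iterate-bound}) to factor out a common $D_{\infty}$. Second, the lemma has explicit constants, so replace ``$\lesssim(1+c)$'' by $a_{s}\min\{c,a_{s}/S_{s-1}\}\le\sqrt{2}\max\{1,c\}\,a_{s}^{2}/S_{s}$ and $\sum_{s\le t}a_{s}^{2}/S_{s}\le 2S_{t}$. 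With $c=4\rho<4$, each coordinate's stability contribution is at most $\frac{4\sqrt{2}D_{\infty}}{\rho}\sqrt{\sum_{s\le t}\beta_{2}^{t-s}\bg_{s}^{2}[i]}$. After taking expectations this stays within the stated constant $8$.
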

\begin{proof}
By Lemma \ref{lem:vector-eta-prop}, $\left\{ \bet_{t}[i]\right\} _{t=1}^{T}$
is a nonincreasing sequence for any $i\in\left[d\right]$, which allows
us to invoke Lemma \ref{lem:FTRL-regret} to obtain that, for any
$t\in\left[T\right]$,
\begin{align*}
\reg_{t}(\bu_{t}) & \leq\frac{1}{2}\left\Vert \bu_{t}\right\Vert _{\bet_{t}^{-1}}^{2}+\sum_{i=1}^{d}\sum_{s=1}^{t}\left\Vert \bdelta_{s}-\bdelta_{s+1}\right\Vert _{\infty}\left|\nabla_{i}\ell_{s}(\bdelta_{s})\right|\land\frac{\bet_{s-1}[i]\left(\nabla_{i}\ell_{s}(\bdelta_{s})\right)^{2}}{2}\\
 & \overset{(a)}{=}\frac{1}{2}\left\Vert \bu_{t}\right\Vert _{\bet_{t}^{-1}}^{2}+\sum_{i=1}^{d}\sum_{s=1}^{t}\left\Vert \bdelta_{s}-\bdelta_{s+1}\right\Vert _{\infty}\rho^{-s}\beta_{2}^{-\frac{s}{2}}\left|\bg_{s}[i]\right|\land\frac{\bet_{s-1}[i]\cdot\rho^{-2s}\beta_{2}^{-s}\bg_{s}^{2}[i]}{2}\\
 & \overset{(b)}{\leq}\frac{1}{2}\left\Vert \bu_{t}\right\Vert _{\bet_{t}^{-1}}^{2}+\rho^{-t}\sum_{i=1}^{d}\sum_{s=1}^{t}\left\Vert \bdelta_{s}-\bdelta_{s+1}\right\Vert _{\infty}\beta_{2}^{-\frac{s}{2}}\left|\bg_{s}[i]\right|\land\frac{\bet_{s-1}[i]\cdot\rho^{-s}\beta_{2}^{-s}\bg_{s}^{2}[i]}{2},
\end{align*}
where $(a)$ follows from $\nabla_{i}\ell_{s}(\bdelta_{s})=\beta_{1}^{-s}\bg_{s}[i]\overset{(\ref{eq:main-rho-def})}{=}\rho^{-s}\beta_{2}^{-\frac{s}{2}}\bg_{s}[i]$
and $(b)$ holds by $\rho\overset{(\ref{eq:main-rho-range})}{<}1$.
Multiplying both sides of the above inequality by $\beta_{1}^{t}$
and using $\beta_{1}\overset{(\ref{eq:main-rho-def})}{=}\rho\sqrt{\beta_{2}}$,
we get for any $t\in\left[T\right]$,
\begin{equation}
\beta_{1}^{t}\reg_{t}(\bu_{t})\leq\frac{\beta_{1}^{t}}{2}\left\Vert \bu_{t}\right\Vert _{\bet_{t}^{-1}}^{2}+\beta_{2}^{\frac{t}{2}}\sum_{i=1}^{d}\sum_{s=1}^{t}\left\Vert \bdelta_{s}-\bdelta_{s+1}\right\Vert _{\infty}\beta_{2}^{-\frac{s}{2}}\left|\bg_{s}[i]\right|\land\frac{\bet_{s-1}[i]\cdot\rho^{-s}\beta_{2}^{-s}\bg_{s}^{2}[i]}{2}.\label{eq:regret-1}
\end{equation}

Fix $t\in\left[T\right]$ in the following proof. We first observe
that
\begin{align}
\frac{\beta_{1}^{t}}{2}\left\Vert \bu_{t}\right\Vert _{\bet_{t}^{-1}}^{2} & =\frac{\beta_{1}^{t}}{2}\sum_{i=1}^{d}\frac{\bu_{t}^{2}[i]}{\bet_{t}[i]}\overset{(\ref{eq:main-u-def})}{\leq}\frac{\beta_{1}^{t}U^{2}}{2}\sum_{i=1}^{d}\frac{1}{\bet_{t}[i]}\nonumber \\
 & \overset{(\ref{eq:main-vector-eta-def})}{=}\frac{U^{2}}{2}\sum_{i=1}^{d}\frac{\epsilon+\sqrt{(\sum_{s=1}^{t}\beta_{2}^{t-s}\bg_{s}^{2}[i])(1-\beta_{2})/(1-\beta_{2}^{t})}}{\gamma(1-\beta_{1})/(1-\beta_{1}^{t})}\nonumber \\
 & \overset{(\ref{eq:main-eta-def})}{=}\frac{U^{2}d\epsilon(1-\beta_{1}^{t})}{2\gamma(1-\beta_{1})}+\frac{U^{2}}{2\eta_{t}}\sum_{i=1}^{d}\sqrt{\sum_{s=1}^{t}\beta_{2}^{t-s}\bg_{s}^{2}[i]}.\label{eq:regret-2}
\end{align}

Next, we recall from Lemma \ref{lem:iterate-bound} that $\left\Vert \bdelta_{t+1}\right\Vert \leq D_{t+1},\forall t\in\left[T\right]$
where $\left\{ D_{t+1}=\eta_{t}\sqrt{\frac{1-\rho^{2t}}{1-\rho^{2}}}\right\} _{t=1}^{T}$
is a nondecreasing sequence. Therefore, given $s\in\left[t\right]$,
we notice that
\[
\left\Vert \bdelta_{s}-\bdelta_{s+1}\right\Vert _{\infty}\leq\left\Vert \bdelta_{s}\right\Vert _{\infty}+\left\Vert \bdelta_{s+1}\right\Vert _{\infty}\leq\1\left[s>1\right]D_{s}+D_{s+1}\leq2D_{t+1},
\]
where we use $\bdelta_{1}=\bzero$ in the second step (see Remark
\ref{rem:FTRL-delta-eta}). Thus, we have
\begin{equation}
\left\Vert \bdelta_{s}-\bdelta_{s+1}\right\Vert _{\infty}\beta_{2}^{-\frac{s}{2}}\left|\bg_{s}[i]\right|\leq2D_{t+1}\beta_{2}^{-\frac{s}{2}}\left|\bg_{s}[i]\right|.\label{eq:regret-3}
\end{equation}
Moreover, by $\epsilon\geq0$ and the definitions of $\left\{ \bet_{t}\right\} _{t=1}^{T}$
(see (\ref{eq:main-vector-eta-def})) and $\left\{ \eta_{t}\right\} _{t=1}^{T}$
(see (\ref{eq:main-eta-def})), we have
\[
\bet_{t}[i]\leq\frac{\eta_{t}\rho^{t}}{\sqrt{\sum_{s=1}^{t}\beta_{2}^{-s}\bg_{s}^{2}[i]}},\forall t\in\left[T\right],
\]
which implies that, for $2\leq s\leq t$,
\begin{equation}
\frac{\bet_{s-1}[i]\cdot\rho^{-s}\beta_{2}^{-s}\bg_{s}^{2}[i]}{2}\leq\frac{\eta_{s-1}\beta_{2}^{-s}\bg_{s}^{2}[i]}{2\rho\sqrt{\sum_{l=1}^{s-1}\beta_{2}^{-l}\bg_{l}^{2}[i]}}\leq\frac{D_{t+1}\beta_{2}^{-s}\bg_{s}^{2}[i]}{2\rho\sqrt{\sum_{l=1}^{s-1}\beta_{2}^{-l}\bg_{l}^{2}[i]}},\label{eq:regret-4}
\end{equation}
where the last step is due to $\eta_{s-1}\leq D_{s}\leq D_{t+1}$.

We combine (\ref{eq:regret-3}) and (\ref{eq:regret-4}) to obtain
that, for $2\leq s\leq t$,
\begin{align*}
 & \left\Vert \bdelta_{s}-\bdelta_{s+1}\right\Vert _{\infty}\beta_{2}^{-\frac{s}{2}}\left|\bg_{s}[i]\right|\land\frac{\bet_{s-1}[i]\cdot\rho^{-s}\beta_{2}^{-s}\bg_{s}^{2}[i]}{2}\\
\leq & 2D_{t+1}\left(\beta_{2}^{-\frac{s}{2}}\left|\bg_{s}[i]\right|\land\frac{\beta_{2}^{-s}\bg_{s}^{2}[i]}{2\rho\sqrt{\sum_{l=1}^{s-1}\beta_{2}^{-l}\bg_{l}^{2}[i]}}\right)\leq\frac{2\sqrt{2}D_{t+1}\beta_{2}^{-s}\bg_{s}^{2}[i]}{\rho\sqrt{\sum_{l=1}^{s}\beta_{2}^{-l}\bg_{l}^{2}[i]}}.
\end{align*}
By (\ref{eq:regret-3}), the above bound also holds for $s=1$. Hence,
we can find that
\begin{align}
 & \sum_{s=1}^{t}\left\Vert \bdelta_{s}-\bdelta_{s+1}\right\Vert _{\infty}\beta_{2}^{-\frac{s}{2}}\left|\bg_{s}[i]\right|\land\frac{\bet_{s-1}[i]\cdot\rho^{-s}\beta_{2}^{-s}\bg_{s}^{2}[i]}{2}\nonumber \\
\leq & \sum_{s=1}^{t}\frac{2\sqrt{2}D_{t+1}\beta_{2}^{-s}\bg_{s}^{2}[i]}{\rho\sqrt{\sum_{l=1}^{s}\beta_{2}^{-l}\bg_{l}^{2}[i]}}\leq\frac{4\sqrt{2}D_{t+1}}{\rho}\sqrt{\sum_{s=1}^{t}\beta_{2}^{-s}\bg_{s}^{2}[i]}.\label{eq:regret-5}
\end{align}

We plug (\ref{eq:regret-2}) and (\ref{eq:regret-5}) back into (\ref{eq:regret-1})
to have
\begin{align*}
\beta_{1}^{t}\reg_{t}(\bu_{t}) & \leq\frac{U^{2}d\epsilon(1-\beta_{1}^{t})}{2\gamma(1-\beta_{1})}+\frac{U^{2}}{2\eta_{t}}\sum_{i=1}^{d}\sqrt{\sum_{s=1}^{t}\beta_{2}^{t-s}\bg_{s}^{2}[i]}+\beta_{2}^{\frac{t}{2}}\sum_{i=1}^{d}\frac{4\sqrt{2}D_{t+1}}{\rho}\sqrt{\sum_{s=1}^{t}\beta_{2}^{-s}\bg_{s}^{2}[i]}\\
 & =\frac{U^{2}d\epsilon(1-\beta_{1}^{t})}{2\gamma(1-\beta_{1})}+\left(\frac{U^{2}}{2\eta_{t}}+\frac{4\sqrt{2}D_{t+1}}{\rho}\right)\sum_{i=1}^{d}\sqrt{\sum_{s=1}^{t}\beta_{2}^{t-s}\bg_{s}^{2}[i]},
\end{align*}
which implies that
\begin{equation}
\E\left[\beta_{1}^{t}\reg_{t}(\bu_{t})\right]\leq\frac{U^{2}d\epsilon(1-\beta_{1}^{t})}{2\gamma(1-\beta_{1})}+\left(\frac{U^{2}}{2\eta_{t}}+\frac{4\sqrt{2}D_{t+1}}{\rho}\right)\sum_{i=1}^{d}\E\left[\sqrt{\sum_{s=1}^{t}\beta_{2}^{t-s}\bg_{s}^{2}[i]}\right].\label{eq:regret-6}
\end{equation}
Now, let $\bxi_{t}\defeq\bg_{t}-\nabla F(\bx_{t}),\forall t\in\left[T\right]$
denote the gradient noise. We note that, for any $i\in\left[d\right]$,
\begin{align}
 & \E\left[\sqrt{\sum_{s=1}^{t}\beta_{2}^{t-s}\bg_{s}^{2}[i]}\right]=\E\left[\sqrt{\sum_{s=1}^{t}\beta_{2}^{t-s}\left(\nabla_{i}F(\bx_{s})+\bxi_{s}[i]\right)^{2}}\right]\overset{(a)}{\leq}\E\left[\sqrt{2\sum_{s=1}^{t}\beta_{2}^{t-s}\left(\bG^{2}[i]+\bxi_{s}^{2}[i]\right)}\right]\nonumber \\
\overset{(b)}{\leq} & \sqrt{2\sum_{s=1}^{t}\beta_{2}^{t-s}\bG^{2}[i]}+\E\left[\sqrt{2\sum_{s=1}^{t}\beta_{2}^{t-s}\bxi_{s}^{2}[i]}\right]\overset{(c)}{\leq}\sqrt{2\frac{1-\beta_{2}^{t}}{1-\beta_{2}}}\bG[i]+\sqrt{2}\E\left[\left(\sum_{s=1}^{t}\beta_{2}^{\frac{\p(t-s)}{2}}\left|\bxi_{s}[i]\right|^{\p}\right)^{\frac{1}{\p}}\right]\nonumber \\
\overset{(d)}{\leq} & \sqrt{2\frac{1-\beta_{2}^{t}}{1-\beta_{2}}}\bG[i]+\sqrt{2}\left(\sum_{s=1}^{t}\beta_{2}^{\frac{\p(t-s)}{2}}\E\left[\left|\bxi_{s}[i]\right|^{\p}\right]\right)^{\frac{1}{\p}}\overset{(e)}{\leq}\sqrt{2\frac{1-\beta_{2}^{t}}{1-\beta_{2}}}\bG[i]+2\sqrt{2}\frac{\sqrt{1-\beta_{2}^{t}}}{(1-\beta_{2})^{\frac{1}{\p}}}\bsig[i],\label{eq:regret-7}
\end{align}
where $(a)$ is by $\left(\nabla_{i}F(\bx_{s})+\bxi_{s}[i]\right)^{2}\leq2\left(\nabla_{i}F(\bx_{s})\right)^{2}+2\bxi_{s}^{2}[i]\overset{\text{Assumption }\ref{assu:lip}}{\leq}2\bG^{2}[i]+2\bxi_{s}^{2}[i]$,
$(b)$ holds due to $\sqrt{x+y}\leq\sqrt{x}+\sqrt{y},\forall x,y\geq0$,
$(c)$ follows from $\sum_{s=1}^{t}\beta_{2}^{t-s}=\frac{1-\beta_{2}^{t}}{1-\beta_{2}}$
and $\left\Vert \cdot\right\Vert _{2}\leq\left\Vert \cdot\right\Vert _{\p},\forall\p\in\left[1,2\right]$,
$(d)$ is due to H\"{o}lder's inequality, and $(e)$ is by Assumption
\ref{assu:heavy}, $\sum_{s=1}^{t}\beta_{2}^{\frac{\p(t-s)}{2}}=\frac{1-\beta_{2}^{\frac{\p t}{2}}}{1-\beta_{2}^{\frac{\p}{2}}}$,
$\left(1-\beta_{2}^{\frac{\p t}{2}}\right)^{\frac{1}{\p}}\leq\left(1-\beta_{2}^{t}\right)^{\frac{1}{\p}}\leq\sqrt{1-\beta_{2}^{t}}$,
$1-\beta_{2}^{\frac{\p}{2}}\geq\frac{\p}{2}\left(1-\beta_{2}\right)$,
and $\left(\frac{\p}{2}\right)^{\frac{1}{\p}}\geq\frac{1}{2}$ when
$\p\in\left[1,2\right]$.

Finally, combining (\ref{eq:regret-6}) and (\ref{eq:regret-7}),
we have
\[
\E\left[\beta_{1}^{t}\reg_{t}(\bu_{t})\right]\leq\frac{U^{2}d\epsilon(1-\beta_{1}^{t})}{2\gamma(1-\beta_{1})}+\left(\frac{U^{2}\sqrt{1-\beta_{2}^{t}}}{\sqrt{2}\eta_{t}}+\frac{8D_{t+1}\sqrt{1-\beta_{2}^{t}}}{\rho}\right)\left(\frac{\left\Vert \bG\right\Vert _{1}}{\sqrt{1-\beta_{2}}}+\frac{2\left\Vert \bsig\right\Vert _{1}}{(1-\beta_{2})^{\frac{1}{\p}}}\right).
\]
To finish the proof, we apply the following two results:
\begin{eqnarray*}
\eta_{t}\overset{(\ref{eq:main-eta-def})}{=}\frac{\gamma(1-\beta_{1})/(1-\beta_{1}^{t})}{\sqrt{(1-\beta_{2})/(1-\beta_{2}^{t})}} & \text{and} & D_{t+1}\sqrt{1-\beta_{2}^{t}}\leq D_{t+1}\overset{\text{Lemma }\ref{lem:iterate-bound}}{\leq}D_{\infty}=\frac{\gamma(1-\beta_{1})}{\sqrt{(1-\beta_{2})(1-\rho^{2})}}.
\end{eqnarray*}
\end{proof}

Next, to connect our analysis with the definition of $(\lambda,\varepsilon)$-$L_{1}$-stationary
point, we need the following Lemma \ref{lem:norm}, which is a direct
consequence of Lemma \ref{lem:iterate-bound}.
\begin{lem}
\label{lem:norm}With the loss $\left\{ \ell_{t}(\bdelta)=\left\langle \beta_{1}^{-t}\bg_{t},\bdelta\right\rangle \right\} _{t=1}^{T}$
and the stepsize $\left\{ \bet_{t}\right\} _{t=1}^{T}$ defined in
(\ref{eq:main-vector-eta-def}), $\FTRL$ (Algorithm \ref{alg:FTRL})
guarantees that, for any $t\in\left[T\right]$,
\[
\E_{\random}\left[\left\Vert \bdelta_{t+1}\right\Vert _{2}^{2}\right]\leq\frac{d\gamma^{2}(1-\beta_{1})^{2}}{(1-\beta_{2})(1-\rho^{2})}.
\]
\end{lem}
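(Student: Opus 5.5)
This follows directly from Lemma \ref{lem:iterate-bound}, which holds pathwise, i.e., for every realization of $\left\{ \alpha_{t}\right\} _{t=1}^{T}$ and $\left\{ Z_{t}\right\} _{t=1}^{T}$. No probabilistic argument is needed; taking $\E_{\random}$ at the end is trivial.

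First, fix $t\in\left[T\right]$ and compare the $\ell_{2}$ and $\ell_{\infty}$ norms on $\R^{d}$:
\[
\left\Vert \bdelta_{t+1}\right\Vert _{2}^{2}=\sum_{i=1}^{d}\bdelta_{t+1}^{2}[i]\leq d\left\Vert \bdelta_{t+1}\right\Vert _{\infty}^{2}.
\]

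Second, invoke Lemma \ref{lem:iterate-bound}, which gives $\left\Vert \bdelta_{t+1}\right\Vert _{\infty}\leq D_{t+1}\leq D_{\infty}$ with
\[
D_{\infty}=\frac{\gamma(1-\beta_{1})}{\sqrt{(1-\beta_{2})(1-\rho^{2})}}.
\]
That lemma in turn relies on $\epsilon\geq0$, the Cauchy--Schwarz step, and the monotonicity in Lemma \ref{lem:eta-prop}, all of which require $\rho\in\left[0,1\right)$. Squaring and combining the two steps yields
\[
\left\Vert \bdelta_{t+1}\right\Vert _{2}^{2}\leq dD_{\infty}^{2}=\frac{d\gamma^{2}(1-\beta_{1})^{2}}{(1-\beta_{2})(1-\rho^{2})}
\]
almost surely.

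Finally, take $\E_{\random}$ on both sides. The right-hand side is deterministic, so the stated bound follows.

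There is no real obstacle here; the only point to check is that the bound from Lemma \ref{lem:iterate-bound} is uniform in $t$ and free of randomness. It is, because the Cauchy--Schwarz bound cancels all dependence on the stochastic gradients $\bg_{s}$. If one prefers the sharper, $t$-dependent version, $dD_{t+1}^{2}$ may be kept instead. The uniform constant $D_{\infty}^{2}$ is what is needed later, when this bound is plugged into Lemma \ref{lem:EO2NC-stability} with $\left\Vert \cdot\right\Vert =\left\Vert \cdot\right\Vert _{2}$.
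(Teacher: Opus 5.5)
Your proposal is correct and matches the paper's proof: it bounds $\left\Vert \bdelta_{t+1}\right\Vert _{2}^{2}\leq d\left\Vert \bdelta_{t+1}\right\Vert _{\infty}^{2}\leq dD_{\infty}^{2}$ pathwise via Lemma \ref{lem:iterate-bound} and then takes expectations of a deterministic bound. One small precision: the Cauchy--Schwarz step itself does not need $\rho<1$; that condition is only needed for the closed form of $D_{t+1}$ and for $D_{\infty}$ to be finite.
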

\begin{proof}
For any $t\in\left[T\right]$, note that $\left\Vert \bdelta_{t+1}\right\Vert _{2}^{2}\leq d\left\Vert \bdelta_{t+1}\right\Vert _{\infty}^{2}\leq dD_{\infty}^{2}=\frac{d\gamma^{2}(1-\beta_{1})^{2}}{(1-\beta_{2})(1-\rho^{2})}$,
where the second inequality follows from Lemma \ref{lem:iterate-bound}.
Taking expectations with respect to $\random$ on both sides then
gives the desired result.
\end{proof}

\subsection{Proof of Theorem \ref{thm:Adam-rate}}

With the preceding preparation, we are now ready to prove Theorem
\ref{thm:Adam-rate}.

\begin{proof}
We first recall two pieces of notation introduced earlier.
\begin{itemize}
\item $\left\{ \bar{\bx}_{t}\right\} _{t=1}^{T}$ is the $\beta_{1}$-EMA
of $\left\{ \bx_{t}\right\} _{t=1}^{T}$, i.e.,
\begin{equation}
\bar{\bx}_{t}=\frac{1-\beta_{1}}{1-\beta_{1}^{t}}\sum_{s=1}^{t}\beta_{1}^{t-s}\bx_{s},\forall t\in\left[T\right].\label{eq:Adam-rate-x}
\end{equation}
\item $\left\{ \by_{t}\right\} _{t=1}^{T}$ is the sequence satisfying that
$\Pr\left[\by_{t}=\bx_{s}\right]=\frac{1-\beta_{1}}{1-\beta_{1}^{t}}\beta_{1}^{t-s},\forall s\in\left[t\right],\forall t\in\left[T\right]$.
Consequently,
\begin{equation}
\E_{\by_{t}}\left[\by_{t}\right]=\frac{1-\beta_{1}}{1-\beta_{1}^{t}}\sum_{s=1}^{t}\beta_{1}^{t-s}\bx_{s}\overset{(\ref{eq:Adam-rate-x})}{=}\bar{\bx}_{t},\forall t\in\left[T\right].\label{eq:Adam-rate-y}
\end{equation}
\end{itemize}
Now, by Definition \ref{def:stationary} and (\ref{eq:Adam-rate-y}),
we have
\begin{align}
 & \E_{\tau,\random}\left[\left\Vert \nabla F(\bar{\bx}_{\tau})\right\Vert _{1}^{\left[\lambda\right]}\right]\leq\E_{\tau,\random}\left[\left\Vert \E_{\by_{\tau}}\left[\nabla F(\by_{\tau})\right]\right\Vert _{1}+\lambda\E_{\by_{\tau}}\left[\left\Vert \by_{\tau}-\bar{\bx}_{\tau}\right\Vert _{2}^{2}\right]\right]\nonumber \\
\leq & \frac{\Delta}{UT}+\sum_{t=1}^{T}\frac{1-\beta_{1}\1\left[t\neq T\right]}{UT}\E_{\random}\left[\beta_{1}^{t}\reg_{t}(\bu_{t})\right]\nonumber \\
 & +\frac{2\left\Vert \bsig\right\Vert _{1}}{\left(1-\beta_{1}\right)^{\frac{1}{\p}}T}+2\left(1-\beta_{1}\right)^{\frac{1}{\cp}}\left\Vert \bsig\right\Vert _{1}+\frac{4\lambda\beta_{1}}{(1-\beta_{1})^{2}T}\sum_{t=1}^{T}\E_{\random}\left[\left\Vert \bdelta_{t+1}\right\Vert _{2}^{2}\right],\label{eq:Adam-rate-1}
\end{align}
where $\left\{ \bu_{t}\right\} _{t=1}^{T}$ is defined in (\ref{eq:main-u-def}),
and the second inequality follows from Lemmas \ref{lem:EO2NC-gradient}
and \ref{lem:EO2NC-stability}.

By Lemma \ref{lem:regret}, for any $t\in\left[T\right]$,
\[
\E_{\random}\left[\beta_{1}^{t}\reg_{t}(\bu_{t})\right]\leq\frac{U^{2}d\epsilon(1-\beta_{1}^{t})}{2\gamma(1-\beta_{1})}+\left(\frac{U^{2}\sqrt{1-\beta_{2}}(1-\beta_{1}^{t})}{\sqrt{2}\gamma(1-\beta_{1})}+\frac{8\gamma(1-\beta_{1})}{\rho\sqrt{(1-\rho^{2})(1-\beta_{2})}}\right)\left(\frac{\left\Vert \bG\right\Vert _{1}}{\sqrt{1-\beta_{2}}}+\frac{2\left\Vert \bsig\right\Vert _{1}}{(1-\beta_{2})^{\frac{1}{\p}}}\right),
\]
which implies that
\begin{align}
 & \sum_{t=1}^{T}\frac{1-\beta_{1}\1\left[t\neq T\right]}{UT}\E_{\random}\left[\beta_{1}^{t}\reg_{t}(\bu_{t})\right]\nonumber \\
\leq & \frac{Ud\epsilon}{2\gamma}+\left(\frac{U\sqrt{1-\beta_{2}}}{\sqrt{2}\gamma}+\frac{8\gamma(1-\beta_{1})(1-\beta_{1}+\frac{\beta_{1}}{T})}{U\rho\sqrt{(1-\rho^{2})(1-\beta_{2})}}\right)\left(\frac{\left\Vert \bG\right\Vert _{1}}{\sqrt{1-\beta_{2}}}+\frac{2\left\Vert \bsig\right\Vert _{1}}{(1-\beta_{2})^{\frac{1}{\p}}}\right).\label{eq:Adam-rate-2}
\end{align}

By Lemma \ref{lem:norm}, for any $t\in\left[T\right]$,
\[
\E_{\random}\left[\left\Vert \bdelta_{t+1}\right\Vert _{2}^{2}\right]\leq\frac{d\gamma^{2}(1-\beta_{1})^{2}}{(1-\beta_{2})(1-\rho^{2})},
\]
which implies that
\begin{equation}
\frac{4\lambda\beta_{1}}{(1-\beta_{1})^{2}T}\sum_{t=1}^{T}\E_{\random}\left[\left\Vert \bdelta_{t+1}\right\Vert _{2}^{2}\right]\leq\frac{4\lambda d\beta_{1}\gamma^{2}}{(1-\beta_{2})(1-\rho^{2})}.\label{eq:Adam-rate-3}
\end{equation}

Finally, plugging (\ref{eq:Adam-rate-2}) and (\ref{eq:Adam-rate-3})
back into (\ref{eq:Adam-rate-1}) gives the following bound
\begin{align*}
\E_{\tau,\random}\left[\left\Vert \nabla F(\bar{\bx}_{\tau})\right\Vert _{1}^{\left[\lambda\right]}\right]\leq & \frac{\Delta}{UT}+\frac{Ud\epsilon}{2\gamma}+\frac{2\left\Vert \bsig\right\Vert _{1}}{(1-\beta_{1})^{\frac{1}{\p}}T}+2(1-\beta_{1})^{\frac{1}{\cp}}\left\Vert \bsig\right\Vert _{1}+\frac{4\lambda d\beta_{1}\gamma^{2}}{(1-\beta_{2})(1-\rho^{2})}\\
 & +\left(\frac{U\sqrt{1-\beta_{2}}}{\sqrt{2}\gamma}+\frac{8\gamma(1-\beta_{1})(1-\beta_{1}+\frac{\beta_{1}}{T})}{U\rho\sqrt{(1-\rho^{2})(1-\beta_{2})}}\right)\left(\frac{\left\Vert \bG\right\Vert _{1}}{\sqrt{1-\beta_{2}}}+\frac{2\left\Vert \bsig\right\Vert _{1}}{(1-\beta_{2})^{\frac{1}{\p}}}\right),
\end{align*}
optimizing the R.H.S. of which with respect to $U$ yields the rate
\begin{align*}
\E_{\tau,\random}\left[\left\Vert \nabla F(\bar{\bx}_{\tau})\right\Vert _{1}^{\left[\lambda\right]}\right]\lesssim & \sqrt{\frac{\Delta d\epsilon}{\gamma T}}+\sqrt{\frac{(1-\beta_{1})(1-\beta_{1}+\frac{\beta_{1}}{T})}{\rho\sqrt{1-\rho^{2}}}}\left(\frac{\left\Vert \bG\right\Vert _{1}}{\sqrt{1-\beta_{2}}}+\frac{\left\Vert \bsig\right\Vert _{1}}{(1-\beta_{2})^{\frac{1}{\p}}}\right)\\
 & +\sqrt{\left(\frac{\sqrt{1-\beta_{2}}\Delta}{\gamma T}+\frac{(1-\beta_{1})(1-\beta_{1}+\frac{\beta_{1}}{T})d\epsilon}{\rho\sqrt{(1-\rho^{2})(1-\beta_{2})}}\right)\left(\frac{\left\Vert \bG\right\Vert _{1}}{\sqrt{1-\beta_{2}}}+\frac{\left\Vert \bsig\right\Vert _{1}}{(1-\beta_{2})^{\frac{1}{\p}}}\right)}\\
 & +\frac{\left\Vert \bsig\right\Vert _{1}}{(1-\beta_{1})^{\frac{1}{\p}}T}+(1-\beta_{1})^{\frac{1}{\cp}}\left\Vert \bsig\right\Vert _{1}+\frac{\lambda d\beta_{1}\gamma^{2}}{(1-\beta_{2})(1-\rho^{2})}.
\end{align*}
The above inequality leads us to the final desired bound by observing
the following fact:
\[
\sqrt{\frac{(1-\beta_{1})(1-\beta_{1}+\frac{\beta_{1}}{T})}{\rho\sqrt{1-\rho^{2}}}}\geq\frac{1-\beta_{1}}{\sqrt{\rho\sqrt{1-\rho^{2}}}}\overset{(\ref{eq:main-rho-range})}{\geq}1-\beta_{1}\geq(1-\beta_{1})^{\frac{1}{\cp}}\left(\frac{1-\beta_{2}}{2}\right)^{\frac{1}{\p}},
\]
where the last step is due to $\frac{1}{\p}+\frac{1}{\cp}=1$ and
\[
1-\beta_{1}>\frac{1-\beta_{1}^{2}}{2}\overset{(\ref{eq:main-rho-range})}{>}\frac{1-\beta_{2}}{2}.
\]
\end{proof}

%% file: conclusion.tex
\section{Conclusion}

In this work, we establish the first finite-time convergence rates
and asymptotic convergence guarantees for the original $\Adam$ algorithm
in nonsmooth nonconvex optimization. Our results provably apply to
heavy-tailed noise with tail index $\p\in\left(\nicefrac{4}{3},2\right]$,
even if the value of $\p$ is unknown. Moreover, many of our results
are proved under the choice $\beta_{1}=\beta_{2}$, aligning with
the recent empirical findings of \citet{NEURIPS2025_5bd9aa20}. Our
analysis builds on the $\EOTNC$ framework developed by \citet{pmlr-v235-zhang24k}
and \citet{NEURIPS2024_ac8ec9b4}. However, our proofs require a more
delicate and technical analysis than what is available in the existing
literature.

Several important questions remain open, which we list below.

First, our analysis requires a randomly scaled learning rate for $\Adam$,
which is unfortunately not used in practice. It is unclear whether
one can derandomize the learning rate and obtain a deterministic schedule
while still ensuring convergence.

Next, our finite-time rates, for both known and unknown $\p$, are
clearly suboptimal, since they lead to worse sample complexity than
the best-known bound achieved by $\ClippedAdam$ \citep{NEURIPS2024_ac8ec9b4}.
Therefore, there is substantial room for improvement.

Moreover, our results do not cover the regime where the tail index
$\p\in\left(1,\nicefrac{4}{3}\right]$. Determining whether $\Adam$,
even with a randomly scaled learning rate, can converge in this regime
remains an important problem, which we hope will be addressed in the
future.

Lastly, we note that our result also relies on a known time horizon
$T$. Developing a result that does not require prior knowledge of
$T$ would make the result more practical.

%% file: appendix.tex
\section{Proof of Lemma \ref{lem:EO2NC-gradient}\label{sec:EO2NC}}

\begin{proof}
In the following proof, we write $\bx_{0}\defeq\bx_{1}$. We also
recall from Remark \ref{rem:FTRL-delta-eta} that $\bdelta_{1}=\bzero$.
Thus, we can view $\bx_{1}$ as being generated by
\begin{eqnarray*}
\bx_{1}=\bx_{0}+\alpha_{0}\bdelta_{1} & \text{where} & \alpha_{0}\sim\ex(1).
\end{eqnarray*}
Equivalently, the update rules of both $\EOTNC$ (except for its Lines
3 and 4) and $\FTRL$ can be extended to $t=0$. For simplicity, we
denote the gradient noise by $\bxi_{t}\defeq\bg_{t}-\nabla F(\bx_{t})$
for any $t\in\left[T\right]$.

We first observe that the following identity holds
\[
F(\bx_{T})-F(\bx_{0})=\sum_{t=1}^{T}\left(1-\beta_{1}\1\left[t\neq T\right]\right)\beta_{1}^{t}\sum_{s=1}^{t}\beta_{1}^{-s}\left(F(\bx_{s})-F(\bx_{s-1})\right).
\]
Take expectations with respect to $\random$ on both sides of the
above equation and use 
\[
F(\bx_{T})-F(\bx_{0})\overset{\text{Assumption }\ref{assu:lb}}{\geq}F_{\star}-F(\bx_{0})=F_{\star}-F(\bx_{1})=-\Delta
\]
to have
\begin{equation}
-\Delta\leq\sum_{t=1}^{T}\left(1-\beta_{1}\1\left[t\neq T\right]\right)\beta_{1}^{t}\E_{\random}\left[\sum_{s=1}^{t}\beta_{1}^{-s}\left(F(\bx_{s})-F(\bx_{s-1})\right)\right].\label{eq:EO2NC-gradient-1}
\end{equation}

By \citet[Lemma 3.1]{pmlr-v235-zhang24k}, under Assumptions \ref{assu:FTC}
and \ref{assu:unbias}, $\EOTNC$ guarantees that
\[
\E_{\random}\left[F(\bx_{t})-F(\bx_{t-1})\right]=\E_{\random}\left[\left\langle \bg_{t},\bdelta_{t}\right\rangle \right],\forall t\in\left[T\right],
\]
which implies that, for any $t\in\left[T\right]$ and $\bu_{t}\in\R^{d}$,
\begin{align}
 & \E_{\random}\left[\sum_{s=1}^{t}\beta_{1}^{-s}\left(F(\bx_{s})-F(\bx_{s-1})\right)\right]=\E_{\random}\left[\sum_{s=1}^{t}\left\langle \beta_{1}^{-s}\bg_{s},\bdelta_{s}\right\rangle \right]\nonumber \\
= & \E_{\random}\left[\sum_{s=1}^{t}\left\langle \beta_{1}^{-s}\bg_{s},\bdelta_{s}-\bu_{t}\right\rangle \right]+\E_{\random}\left[\left\langle \sum_{s=1}^{t}\beta_{1}^{-s}\nabla F(\bx_{s}),\bu_{t}\right\rangle \right]+\E_{\random}\left[\left\langle \sum_{s=1}^{t}\beta_{1}^{-s}\bxi_{s},\bu_{t}\right\rangle \right]\nonumber \\
= & \E_{\random}\left[\reg_{t}(\bu_{t})\right]+\E_{\random}\left[\left\langle \sum_{s=1}^{t}\beta_{1}^{-s}\nabla F(\bx_{s}),\bu_{t}\right\rangle \right]+\E_{\random}\left[\left\langle \sum_{s=1}^{t}\beta_{1}^{-s}\bxi_{s},\bu_{t}\right\rangle \right],\label{eq:EO2NC-gradient-2}
\end{align}
where the second step uses $\bg_{s}=\nabla F(\bx_{s})+\bxi_{s}$.

Next, we use $\bu_{t}[i]\overset{(\ref{eq:main-u-def})}{=}-U\cdot\sgn\left(\sum_{s=1}^{t}\beta_{1}^{-s}\nabla_{i}F(\bx_{s})\right),\forall i\in\left[d\right]$
to obtain
\[
\left\langle \sum_{s=1}^{t}\beta_{1}^{-s}\nabla F(\bx_{s}),\bu_{t}\right\rangle =-U\left\Vert \sum_{s=1}^{t}\beta_{1}^{-s}\nabla F(\bx_{s})\right\Vert _{1}
\]
and
\[
\left\langle \sum_{s=1}^{t}\beta_{1}^{-s}\bxi_{s},\bu_{t}\right\rangle \leq\left\Vert \sum_{s=1}^{t}\beta_{1}^{-s}\bxi_{s}\right\Vert _{1}\left\Vert \bu_{t}\right\Vert _{\infty}=U\left\Vert \sum_{s=1}^{t}\beta_{1}^{-s}\bxi_{s}\right\Vert _{1}.
\]
By the two results above and (\ref{eq:EO2NC-gradient-2}), we get,
for any $t\in\left[T\right]$,
\begin{equation}
\E_{\random}\left[\sum_{s=1}^{t}\beta_{1}^{-s}\left(F(\bx_{s})-F(\bx_{s-1})\right)\right]\leq\E_{\random}\left[\reg_{t}(\bu_{t})\right]-U\E_{\random}\left[\left\Vert \sum_{s=1}^{t}\beta_{1}^{-s}\nabla F(\bx_{s})\right\Vert _{1}\right]+U\E_{\random}\left[\left\Vert \sum_{s=1}^{t}\beta_{1}^{-s}\bxi_{s}\right\Vert _{1}\right].\label{eq:EO2NC-gradient-3}
\end{equation}

Furthermore, we observe that
\begin{align}
 & \left\Vert \sum_{s=1}^{t}\beta_{1}^{-s}\nabla F(\bx_{s})\right\Vert _{1}=\left(\sum_{s^{\prime}=1}^{t}\beta_{1}^{-s^{\prime}}\right)\left\Vert \sum_{s=1}^{t}\frac{\beta_{1}^{-s}}{\sum_{s^{\prime}=1}^{t}\beta_{1}^{-s^{\prime}}}\nabla F(\bx_{s})\right\Vert _{1}\nonumber \\
= & \frac{\beta_{1}^{-t}-1}{1-\beta_{1}}\left\Vert \sum_{s=1}^{t}\frac{1-\beta_{1}}{1-\beta_{1}^{t}}\beta_{1}^{t-s}\nabla F(\bx_{s})\right\Vert _{1}=\frac{\beta_{1}^{-t}-1}{1-\beta_{1}}\left\Vert \E_{\by_{t}}\left[\nabla F(\by_{t})\right]\right\Vert _{1},\label{eq:EO2NC-gradient-4}
\end{align}
where the last step is due to the definition of $\by_{t}$. We can
also bound
\begin{align}
 & \E_{\random}\left[\left\Vert \sum_{s=1}^{t}\beta_{1}^{-s}\bxi_{s}\right\Vert _{1}\right]=\sum_{i=1}^{d}\E_{\random}\left[\left|\sum_{s=1}^{t}\beta_{1}^{-s}\bxi_{s}[i]\right|\right]\overset{(a)}{\leq}\sum_{i=1}^{d}\left(\E_{\random}\left[\left|\sum_{s=1}^{t}\beta_{1}^{-s}\bxi_{s}[i]\right|^{\p}\right]\right)^{\frac{1}{\p}}\nonumber \\
\overset{(b)}{\leq} & \sum_{i=1}^{d}\left(2^{2-\p}\sum_{s=1}^{t}\beta_{1}^{-\p s}\E_{\random}\left[\left|\bxi_{s}[i]\right|^{\p}\right]\right)^{\frac{1}{\p}}\overset{(c)}{\leq}2^{\frac{2}{\p}-1}\left(\sum_{s=1}^{t}\beta_{1}^{-\p s}\right)^{\frac{1}{\p}}\left\Vert \bsig\right\Vert _{1}\overset{\p>1}{\leq}2\left(\frac{\beta_{1}^{-\p t}-1}{1-\beta_{1}^{\p}}\right)^{\frac{1}{\p}}\left\Vert \bsig\right\Vert _{1},\label{eq:EO2NC-gradient-5}
\end{align}
where $(a)$ is by H\"{o}lder's inequality, $(b)$ follows from Assumption
\ref{assu:unbias} and the von Bahr-Esseen inequality \citep{8628f855-d581-37e5-a68f-5014db61e4b7}
for martingales \citep[Theorem 1.1 and Proposition 1.8]{10.15352/afa/06-4-1},
and $(c)$ holds due to Assumption \ref{assu:heavy}.

Finally, we plug (\ref{eq:EO2NC-gradient-4}) and (\ref{eq:EO2NC-gradient-5})
back into (\ref{eq:EO2NC-gradient-3}) to have
\[
\E_{\random}\left[\sum_{s=1}^{t}\beta_{1}^{-s}\left(F(\bx_{s})-F(\bx_{s-1})\right)\right]\leq\E_{\random}\left[\reg_{t}(\bu_{t})\right]-U\frac{\beta_{1}^{-t}-1}{1-\beta_{1}}\E_{\random}\left[\left\Vert \E_{\by_{t}}\left[\nabla F(\by_{t})\right]\right\Vert _{1}\right]+2U\left(\frac{\beta_{1}^{-\p t}-1}{1-\beta_{1}^{\p}}\right)^{\frac{1}{\p}}\left\Vert \bsig\right\Vert _{1}.
\]
 which, combined with (\ref{eq:EO2NC-gradient-1}), further implies
that
\begin{align*}
-\Delta & \leq\sum_{t=1}^{T}\left(1-\beta_{1}\1\left[t\neq T\right]\right)\left(\E_{\random}\left[\beta_{1}^{t}\reg_{t}(\bu_{t})\right]-U\frac{1-\beta_{1}^{t}}{1-\beta_{1}}\E_{\random}\left[\left\Vert \E_{\by_{t}}\left[\nabla F(\by_{t})\right]\right\Vert _{1}\right]+2U\left(\frac{1-\beta_{1}^{\p t}}{1-\beta_{1}^{\p}}\right)^{\frac{1}{\p}}\left\Vert \bsig\right\Vert _{1}\right)\\
 & \leq\sum_{t=1}^{T}\left(1-\beta_{1}\1\left[t\neq T\right]\right)\left(\E_{\random}\left[\beta_{1}^{t}\reg_{t}(\bu_{t})\right]-U\frac{1-\beta_{1}^{t}}{1-\beta_{1}}\E_{\random}\left[\left\Vert \E_{\by_{t}}\left[\nabla F(\by_{t})\right]\right\Vert _{1}\right]+\frac{2U\left\Vert \bsig\right\Vert _{1}}{(1-\beta_{1})^{\frac{1}{\p}}}\right).
\end{align*}
Rearranging terms, using the definition of $\tau$ (see Definition
\ref{def:tau}), and dividing both sides by $UT$, we obtain that
\[
\E_{\tau,\random}\left[\left\Vert \E_{\by_{\tau}}\left[\nabla F(\by_{\tau})\right]\right\Vert _{1}\right]\leq\frac{\Delta}{UT}+\sum_{t=1}^{T}\frac{1-\beta_{1}\1\left[t\neq T\right]}{UT}\E_{\random}\left[\beta_{1}^{t}\reg_{t}(\bu_{t})\right]+\frac{2\left\Vert \bsig\right\Vert _{1}}{(1-\beta_{1})^{\frac{1}{\p}}T}+2(1-\beta_{1})^{\frac{1}{\cp}}\left\Vert \bsig\right\Vert _{1}.
\]
\end{proof}

\section{Proof of Lemma \ref{lem:FTRL-regret}\label{sec:FTRL}}

\begin{proof}
Let
\[
L_{t}(\bdelta)\defeq\frac{1}{2}\left\Vert \bdelta\right\Vert _{\bet_{t-1}^{-1}}^{2}+\sum_{s=1}^{t-1}\ell_{s}(\bdelta),\forall t\in\left[T+1\right],
\]
where we recall from Remark \ref{rem:FTRL-delta-eta} that $\bdelta_{1}=\bzero$
and $\bet_{0}=\bet_{1}$. Equivalently, $\FTRL$ can be written as
\[
\bdelta_{t}=\argmin_{\bdelta\in\R^{d}}L_{t}(\bdelta),\forall t\in\left[T+1\right].
\]

By \citet[Lemma 7.1]{orabona2019modern}, the following equation holds
for any $t\in\left[T\right]$ and $\bu\in\R^{d}$,
\begin{align}
\reg_{t}(\bu) & =\sum_{s=1}^{t}\ell_{s}(\bdelta_{s})-\ell_{s}(\bu)\nonumber \\
 & =\frac{1}{2}\left\Vert \bu\right\Vert _{\bet_{t}^{-1}}^{2}+L_{t+1}(\bdelta_{t+1})-L_{t+1}(\bu)+\sum_{s=1}^{t}L_{s}(\bdelta_{s})-L_{s+1}(\bdelta_{s+1})+\ell_{s}(\bdelta_{s})\nonumber \\
 & \leq\frac{1}{2}\left\Vert \bu\right\Vert _{\bet_{t}^{-1}}^{2}+\sum_{s=1}^{t}L_{s}(\bdelta_{s})-L_{s+1}(\bdelta_{s+1})+\ell_{s}(\bdelta_{s}),\label{eq:FTRL-regret-1}
\end{align}
where the last step is due to $L_{t+1}(\bdelta_{t+1})=\min_{\bdelta\in\R^{d}}L_{t+1}(\bdelta)\leq L_{t+1}(\bu)$.
Note that for any $s\in\left[t\right]$,
\begin{align}
L_{s}(\bdelta_{s})-L_{s+1}(\bdelta_{s+1})+\ell_{s}(\bdelta_{s}) & =L_{s}(\bdelta_{s})-L_{s}(\bdelta_{s+1})+\frac{1}{2}\left\Vert \bdelta_{s+1}\right\Vert _{\bet_{s-1}^{-1}-\bet_{s}^{-1}}^{2}+\ell_{s}(\bdelta_{s})-\ell_{s}(\bdelta_{s+1})\nonumber \\
 & \overset{(a)}{\leq}L_{s}(\bdelta_{s})-L_{s}(\bdelta_{s+1})+\ell_{s}(\bdelta_{s})-\ell_{s}(\bdelta_{s+1})\nonumber \\
 & \overset{(b)}{\leq}\left\langle \nabla(L_{s}+\ell_{s})(\bdelta_{s}),\bdelta_{s}-\bdelta_{s+1}\right\rangle -\frac{1}{2}\left\Vert \bdelta_{s}-\bdelta_{s+1}\right\Vert _{\bet_{s-1}^{-1}}^{2}\nonumber \\
 & \overset{(c)}{=}\left\langle \nabla\ell_{s}(\bdelta_{s}),\bdelta_{s}-\bdelta_{s+1}\right\rangle -\frac{1}{2}\left\Vert \bdelta_{s}-\bdelta_{s+1}\right\Vert _{\bet_{s-1}^{-1}}^{2},\label{eq:FTRL-regret-2}
\end{align}
where $(a)$ holds due to $\bet_{0}=\bet_{1}$ and the condition that
$\left\{ \bet_{t}[i]\right\} _{t=1}^{T}$ is nonincreasing, $(b)$
follows from the strong convexity of $L_{s}+\ell_{s}$ due to the
assumption that $\left\{ \ell_{t}\right\} _{t=1}^{T}$ is differentiable
and convex, and $(c)$ is by $\nabla L_{s}(\bdelta_{s})=\bzero$ since
$\bdelta_{s}=\argmin_{\bdelta\in\R^{d}}L_{s}(\bdelta)$ and $L_{s}$
is differentiable.

We plug (\ref{eq:FTRL-regret-2}) back into (\ref{eq:FTRL-regret-1})
to obtain
\begin{align*}
\reg_{t}(\bu) & \leq\frac{1}{2}\left\Vert \bu\right\Vert _{\bet_{t}^{-1}}^{2}+\sum_{s=1}^{t}\left\langle \nabla\ell_{s}(\bdelta_{s}),\bdelta_{s}-\bdelta_{s+1}\right\rangle -\frac{1}{2}\left\Vert \bdelta_{s}-\bdelta_{s+1}\right\Vert _{\bet_{s-1}^{-1}}^{2}\\
 & =\frac{1}{2}\left\Vert \bu\right\Vert _{\bet_{t}^{-1}}^{2}+\sum_{i=1}^{d}\sum_{s=1}^{t}\nabla_{i}\ell_{s}(\bdelta_{s})\left(\bdelta_{s}[i]-\bdelta_{s+1}[i]\right)-\frac{\left(\bdelta_{s}[i]-\bdelta_{s+1}[i]\right)^{2}}{2\bet_{s-1}[i]}\\
 & \leq\frac{1}{2}\left\Vert \bu\right\Vert _{\bet_{t}^{-1}}^{2}+\sum_{i=1}^{d}\sum_{s=1}^{t}\left\Vert \bdelta_{s}-\bdelta_{s+1}\right\Vert _{\infty}\left|\nabla_{i}\ell_{s}(\bdelta_{s})\right|\land\frac{\bet_{s-1}[i]\left(\nabla_{i}\ell_{s}(\bdelta_{s})\right)^{2}}{2},
\end{align*}
where the last inequality follows from the following two bounds:
\begin{itemize}
\item Bound I:
\begin{align*}
 & \nabla_{i}\ell_{s}(\bdelta_{s})\left(\bdelta_{s}[i]-\bdelta_{s+1}[i]\right)-\frac{\left(\bdelta_{s}[i]-\bdelta_{s+1}[i]\right)^{2}}{2\bet_{s-1}[i]}\\
\leq & \nabla_{i}\ell_{s}(\bdelta_{s})\left(\bdelta_{s}[i]-\bdelta_{s+1}[i]\right)\leq\left\Vert \bdelta_{s}-\bdelta_{s+1}\right\Vert _{\infty}\left|\nabla_{i}\ell_{s}(\bdelta_{s})\right|.
\end{align*}
\item Bound II: By AM-GM inequality,
\[
\nabla_{i}\ell_{s}(\bdelta_{s})\left(\bdelta_{s}[i]-\bdelta_{s+1}[i]\right)-\frac{\left(\bdelta_{s}[i]-\bdelta_{s+1}[i]\right)^{2}}{2\bet_{s-1}[i]}\leq\frac{\bet_{s-1}[i]\left(\nabla_{i}\ell_{s}(\bdelta_{s})\right)^{2}}{2}.
\]
\end{itemize}
\end{proof}

\section{Proof of Lemma \ref{lem:eta-prop}\label{sec:eta}}

\begin{proof}
We fix $t\in\left[T\right]$ in the following proof.
\begin{itemize}
\item For the first inequality, by the definition of $\eta_{t}\overset{(\ref{eq:main-eta-def})}{=}\frac{\gamma(1-\beta_{1})/(1-\beta_{1}^{t})}{\sqrt{(1-\beta_{2})/(1-\beta_{2}^{t})}}$,
it is equivalent to show
\[
\frac{\sqrt{1-\beta_{2}^{t}}}{1-\beta_{1}^{t}}\geq\rho\frac{\sqrt{1-\beta_{2}^{t+1}}}{1-\beta_{1}^{t+1}}.
\]
After substituting $\beta_{1}\overset{(\ref{eq:main-rho-def})}{=}\sqrt{\beta_{2}}\rho$
and rearranging terms, we need to prove
\begin{equation}
\frac{1-\left(\sqrt{\beta_{2}}\rho\right)^{t+1}}{\rho\left(1-\left(\sqrt{\beta_{2}}\rho\right)^{t}\right)}\geq\sqrt{\frac{1-\beta_{2}^{t+1}}{1-\beta_{2}^{t}}}.\label{eq:eta-prop-1}
\end{equation}
Observe that
\[
\frac{1-\left(\sqrt{\beta_{2}}\rho\right)^{t+1}}{\rho\left(1-\left(\sqrt{\beta_{2}}\rho\right)^{t}\right)}=\frac{1-\sqrt{\beta_{2}}\rho}{\rho\left(1-\left(\sqrt{\beta_{2}}\rho\right)^{t}\right)}+\sqrt{\beta_{2}}=\frac{1}{\rho\left(\sum_{s=0}^{t-1}\left(\sqrt{\beta_{2}}\rho\right)^{s}\right)}+\sqrt{\beta_{2}},
\]
which implies that
\begin{equation}
\inf_{0\leq\rho<1}\frac{1-\left(\sqrt{\beta_{2}}\rho\right)^{t+1}}{\rho\left(1-\left(\sqrt{\beta_{2}}\rho\right)^{t}\right)}=\frac{1}{\sum_{s=0}^{t-1}\left(\sqrt{\beta_{2}}\right)^{s}}+\sqrt{\beta_{2}}=\frac{1-\beta_{2}^{\frac{t+1}{2}}}{1-\beta_{2}^{\frac{t}{2}}}.\label{eq:eta-prop-2}
\end{equation}
By (\ref{eq:eta-prop-1}) and (\ref{eq:eta-prop-2}), it suffices
to show
\[
\frac{1-\beta_{2}^{\frac{t+1}{2}}}{1-\beta_{2}^{\frac{t}{2}}}\geq\sqrt{\frac{1-\beta_{2}^{t+1}}{1-\beta_{2}^{t}}}\Leftrightarrow\sqrt{\frac{1-\beta_{2}^{\frac{t+1}{2}}}{1-\beta_{2}^{\frac{t}{2}}}}\geq\sqrt{\frac{1+\beta_{2}^{\frac{t+1}{2}}}{1+\beta_{2}^{\frac{t}{2}}}}\Leftrightarrow\beta_{2}^{\frac{t}{2}}\geq\beta_{2}^{\frac{t+1}{2}}\Leftrightarrow1\geq\beta_{2}.
\]
Therefore, the desired inequality follows.
\item For the second inequality, we plug in $\eta_{t}\overset{(\ref{eq:main-eta-def})}{=}\frac{\gamma(1-\beta_{1})/(1-\beta_{1}^{t})}{\sqrt{(1-\beta_{2})/(1-\beta_{2}^{t})}}$
and $\beta_{1}\overset{(\ref{eq:main-rho-def})}{=}\sqrt{\beta_{2}}\rho$
to have
\[
\eta_{t}\sqrt{1-\rho^{2t}}=\frac{\gamma(1-\beta_{1})}{\sqrt{1-\beta_{2}}}\sqrt{\frac{\left(1-\beta_{2}^{t}\right)\left(1-\rho^{2t}\right)}{\left(1-\left(\sqrt{\beta_{2}}\rho\right)^{t}\right)}}=\frac{\gamma(1-\beta_{1})}{\sqrt{1-\beta_{2}}}\sqrt{1-\left(\frac{\sqrt{\beta_{2}}^{t}-\rho^{t}}{1-\left(\sqrt{\beta_{2}}\rho\right)^{t}}\right)^{2}}.
\]
For simplicity, we denote by $a\defeq\sqrt{\beta_{2}}\lor\rho\in\left[0,1\right)$
and $b\defeq\sqrt{\beta_{2}}\land\rho\in\left[0,1\right)$. Thus,
it remains to show that the sequence $\left\{ \frac{a^{t}-b^{t}}{1-a^{t}b^{t}}\right\} _{t\in\N}$
is nonincreasing, which holds by Lemma \ref{lem:sequence}.
\end{itemize}
\end{proof}

\begin{lem}
\label{lem:sequence}Given $0\leq b\leq a<1$, the sequence $\left\{ \frac{a^{t}-b^{t}}{1-a^{t}b^{t}}\right\} _{t\in\N}$
is nonincreasing.
\end{lem}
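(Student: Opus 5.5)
The plan is to rewrite the ratio with hyperbolic sines. Monotonicity then reduces to the elementary fact that $s\mapsto s\coth s$ is nondecreasing on $(0,\infty)$.

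\emph{Degenerate cases.} If $a=0$, then $b=0$ and the sequence is identically $0$. If $a=b$, the numerator vanishes and the sequence is again identically $0$. If $b=0<a$, the sequence is $a^{t}$, which is nonincreasing because $a<1$. So it remains to treat $0<b<a<1$.

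\emph{Hyperbolic rewriting.} Write $a=e^{-\alpha}$ and $b=e^{-\beta}$ with $0<\alpha<\beta$. Multiply numerator and denominator of $\frac{a^{t}-b^{t}}{1-a^{t}b^{t}}$ by $e^{(\alpha+\beta)t/2}$. This gives
\[
\frac{a^{t}-b^{t}}{1-a^{t}b^{t}}=\frac{e^{ut}-e^{-ut}}{e^{vt}-e^{-vt}}=\frac{\sinh(ut)}{\sinh(vt)},
\]
where $u\defeq(\beta-\alpha)/2$ and $v\defeq(\alpha+\beta)/2$, so that $0<u<v$. It therefore suffices to show that $h(t)\defeq\sinh(ut)/\sinh(vt)$ is nonincreasing on $t\in(0,\infty)$. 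The integers $t\in\N$ are then a special case.

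\emph{Derivative computation.} Since $h>0$, differentiate $\log h$:
\[
\frac{\d}{\d t}\log h(t)=u\coth(ut)-v\coth(vt)=\frac{g(ut)-g(vt)}{t},
\]
where $g(s)\defeq s\coth s$. Because $ut<vt$, the derivative is $\leq0$ provided $g$ is nondecreasing on $(0,\infty)$. To check this, compute
\[
g'(s)=\coth s-\frac{s}{\sinh^{2}s}=\frac{\sinh s\cosh s-s}{\sinh^{2}s}=\frac{\frac{1}{2}\sinh(2s)-s}{\sinh^{2}s}.
\]
This is nonnegative because $\sinh x\geq x$ for all $x\geq0$.

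The argument has no real obstacle once the ratio is recognized as $\sinh(ut)/\sinh(vt)$; that substitution is the step everything rests on. A direct attempt instead would set $x=a^{t}$ and $b=a^{k}$ with $k\geq1$, and then show that $(x-x^{k})/(1-x^{k+1})$ is nondecreasing in $x\in(0,1)$. That route is messier for non-integer $k$, which is why I would use the hyperbolic form.
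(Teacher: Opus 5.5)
Your proof is correct. It shares the paper's skeleton: extend to real $t>0$, differentiate, and reduce the sign of the derivative to monotonicity of a one-variable auxiliary function. The decomposition and the key lemma are different, though.

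The paper applies the quotient rule directly to $\frac{a^t-b^t}{1-a^tb^t}$. It factors the numerator of the derivative as $a^tb^t(\ln a)(\ln b)\,t\,\bigl(h(a^t)-h(b^t)\bigr)$ with $h(x)=\frac{x^2-1}{x\ln x}$. It then proves $h$ nonincreasing on $(0,1)$ by reducing $h'\le 0$ to $\ln x\le\frac{2(x-1)}{x+1}$.

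In your notation $a=e^{-\alpha}$, $b=e^{-\beta}$, the paper's $h$ satisfies $h(e^{-s})=2\sinh(s)/s$. So the paper compares $\sinh(s)/s$ at $s=\alpha t$ and $s=\beta t$, and its final inequality is $\tanh s\le s$ in disguise. You instead compare $s\coth s$ at $s=ut$ and $s=vt$, and need only $\sinh x\ge x$. Both sign conditions reduce to the same inequality $\beta\sinh(\alpha t)\le\alpha\sinh(\beta t)$; the two proofs just factor it differently.

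Your hyperbolic rewriting buys transparency: a one-line log-derivative and no messy factorization. The paper's route works directly in the original variables $a,b$ without reparametrization, at the price of a heavier computation. Your degenerate cases ($a=0$, $a=b$, $b=0<a$) are handled correctly and agree with the paper's one-line treatment of $b=0$ or $b=a$.
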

\begin{proof}
If $b=0$ or $b=a$, the result immediately holds. Therefore, we only
need to consider the case $0<b<a<1$. Let $g(t)\defeq\frac{a^{t}-b^{t}}{1-a^{t}b^{t}}$,
we can find 
\begin{align*}
g^{\prime}(t) & =\frac{(a^{2t}-1)b^{t}\ln b-(b^{2t}-1)a^{t}\ln a}{(1-a^{t}b^{t})^{2}}\\
 & =\frac{a^{t}b^{t}(\ln a)(\ln b)t\left(\frac{a^{2t}-1}{a^{t}\ln a^{t}}-\frac{b^{2t}-1}{b^{t}\ln b^{t}}\right)}{(1-a^{t}b^{t})^{2}}\\
 & =\frac{a^{t}b^{t}(\ln a)(\ln b)t(h(a^{t})-h(b^{t}))}{(1-a^{t}b^{t})^{2}},
\end{align*}
where $h(x)\defeq\frac{x^{2}-1}{x\ln x}$. So it suffices to show
$h(x)$ is nonincreasing on $(0,1)$. Equivalently, we need to prove
$h^{\prime}(x)=\frac{-x^{2}+x^{2}\ln x+\ln x+1}{x^{2}\ln^{2}x}\leq0,\forall x\in(0,1)\Leftrightarrow\ln x\leq\frac{x^{2}-1}{x^{2}+1},\forall x\in(0,1)$,
which can be further reduced to show $\ln x\leq\frac{2(x-1)}{x+1},\forall x\in(0,1).$
Let $k(x)\defeq\frac{2(x-1)}{x+1}-\ln x$. We have $k^{\prime}(x)=\frac{4}{(x+1)^{2}}-\frac{1}{x}=-\frac{(x-1)^{2}}{x(x+1)^{2}}\leq0$.
So $k(x)\geq k(1)=0,\forall x\in(0,1)$, which implies that $\ln x\leq\frac{2(x-1)}{x+1},\forall x\in(0,1)$.
Therefore, the desired result holds.
\end{proof}